\DeclareOldFontCommand{\rm}{\normalfont\rmfamily}{\mathrm}
\DeclareOldFontCommand{\sf}{\normalfont\sffamily}{\mathsf}
\DeclareOldFontCommand{\tt}{\normalfont\ttfamily}{\mathtt}
\DeclareOldFontCommand{\bf}{\normalfont\bfseries}{\mathbf}
\DeclareOldFontCommand{\it}{\normalfont\itshape}{\mathit}
\DeclareOldFontCommand{\sl}{\normalfont\slshape}{\@nomath\sl}
\DeclareOldFontCommand{\sc}{\normalfont\scshape}{\@nomath\sc}
\newcommand\q{\enquote}
\newcommand{\myV}{\tilde{V}}
\newcommand \N   {\mathbb{N}}
\newcommand \R   {\mathbb{R}}
\newcommand{\Uc}{\ensuremath{\mathcal{U}}}
\newcommand{\vertiii}[1]{{\left\vert\kern-0.25ex\left\vert\kern-0.25ex\left\vert #1 
    \right\vert\kern-0.25ex\right\vert\kern-0.25ex\right\vert}}
\newcommand \qrq   {\quad\Rightarrow\quad}
\newcommand{\normt}[1]{{\left\vert\kern-0.25ex\left\vert\kern-0.25ex\left\vert #1 
		\right\vert\kern-0.25ex\right\vert\kern-0.25ex\right\vert}}
\newcommand{\intt}{{\rm int}\,}
\newif\ifMath					
\newif\ifEngi					
\newif\ifDFGtext					 
\newif\ifAndo              
\newif\ifExercises					
\newif\ifSolutions          
\newif\ifGerman							
\newif\ifEnglish						
\newif\ifnothabil						
\newif\ifFuture							
\newif\ifConf                    
\newif\ifJournal								 
\newif\ifNOTFORBOOK
\newif\ifFullVersion
\newif\ifExludedDueToSpaceReasons
\newcommand{\einsnorm}[2]{\ensuremath{
    \!\!\;\!\!\!\;
    \left\bracevert\!\!\!\!\!\left\bracevert
    \!
		\ifthenelse{\isempty{#2}}{#1}{#1(#2)}
    \!
      \right\bracevert\!\!\!\!\!\right\bracevert
    \!\!\;\!\!\!\;
  }}
\definecolor{blond}{rgb}{0.98, 0.94, 0.75}
\newlength\mytemplen
\newsavebox\mytempbox
\newcommand\mybluebox{%
    \@ifnextchar[
       {\@mybluebox}%
       {\@mybluebox[0pt]}}
\def\@mybluebox[#1]{%
    \@ifnextchar[
       {\@@mybluebox[#1]}%
       {\@@mybluebox[#1][0pt]}}
\def\@@mybluebox[#1][#2]#3{
    \sbox\mytempbox{#3}%
    \mytemplen\ht\mytempbox
    \advance\mytemplen #1\relax
    \ht\mytempbox\mytemplen
    \mytemplen\dp\mytempbox
    \advance\mytemplen #2\relax
    \dp\mytempbox\mytemplen
    \colorbox{blond}{\hspace{1em}\usebox{\mytempbox}\hspace{1em}}}
\let\origd=\d
\renewcommand*\d{
  \relax\ifmmode
    \mathrm{d}%
  \else
    \expandafter\origd
  \fi
}\makeatother
\newcommand{\pushright}[1]{\ifmeasuring@#1\else\omit\hfill$\displaystyle#1$\fi\ignorespaces}
\newcommand{\pushleft}[1]{\ifmeasuring@#1\else\omit$\displaystyle#1$\hfill\fi\ignorespaces}
\newcounter{syscounter}
\newcounter{WPcounter}
\newcounter{PRcounter}
\def \d {\displaystyle}
\newtheorem{thm}{Theorem}[section]
\newtheorem{prop}[thm]{Proposition}
\newtheorem{lem}[thm]{Lemma}
\newtheorem{cor}[thm]{Corollary}
\newtheorem{exm}[thm]{Example}
\newtheorem{dfn}[thm]{Definition}
\newtheorem{rem}[thm]{Remark}
\begin{document}
\title{Lyapunov methods for input-to-state stability of time-varying evolution equations}
\thanks{A. Mironchenko has been supported by the Heisenberg program of the German Research Foundation (DFG), grant MI1886/3-1.}
\thanks{R. Heni is a corresponding author.}

\author{Rahma Heni}\address{Faculty of Sciences of Sfax, University of Sfax, 3000 Sfax, Tunisia; \email{henirahmahenirahma@gmail.com \ mohamedali.hammami@fss.rnu.tn}}
\author{Andrii Mironchenko}\address{Department of Mathematics, University of Bayreuth, 95447 Bayreuth, Germany; \email{andrii.mironchenko@uni-bayreuth.de}}
\author{Fabian Wirth}\address{Faculty of Computer Science and Mathematics, University of Passau, 94032 Passau, Germany; \email{fabian.wirth@uni-passau.de}}
\author{Hanen Damak}\address{Faculty of Economics and Management of Sfax, University of Sfax, 3018 Sfax, Tunisia; \email{hanen.damak@yahoo.fr}}
\author{Mohamed Ali Hammami}\sameaddress{1}
%
%
\begin{abstract} We prove that (local) input-to-state stability ((L)ISS) and integral input-to-state stability (iISS) of time-varying infinite-dimensional systems in abstract spaces follows from the existence of a {corresponding} Lyapunov function. 
In particular, input-to-state stability of linear time-varying control
systems in Hilbert spaces with bounded input operators is discussed. Methods for the construction of non-coercive LISS/iISS Lyapunov functions are presented for a certain class of time-varying semi-linear evolution equations. Two examples are given to illustrate the effectiveness of the results. 
\end{abstract}
%
%
\subjclass{93B52, 93C10, 93D30, 37L05, 93C25.}
\keywords{Evolution operators; input-to-state stability (ISS); integral input-to-state-stability (iISS); Lyapunov methods; time-varying systems; infinite-dimensional systems.}
\maketitle
\section*{Introduction}
The concept of input-to-state stability (ISS) proposed in the late 1980s by E. Sontag \cite{Son89} is one of the central notions in robust nonlinear control. ISS has become indispensable for various branches of nonlinear systems theory, such as robust stabilization of nonlinear systems \cite{FrK96}, design of nonlinear observers \cite{ArK01}, analysis of large-scale networks \cite{DRW07,JTP94}, etc.
To study the robustness of systems with saturation and limitations in actuation and processing rate (which are typically not ISS), a notion of integral input-to-state stability (iISS) has been proposed in \cite{sontag1998comments}.
The equivalence of the ISS and the existence of an ISS Lyapunov function in a dissipative form was proved for time-invariant ordinary differential equations (ODEs) with Lipschitz right-hand side in \cite{SoW95}. An analogous Lyapunov characterization was shown for iISS in \cite{ASW00}. An important implication of these results is that for smooth enough systems, ISS implies iISS.
For the overview of the ISS theory of ODEs, we refer to the classic survey \cite{Son08} as well as a recent monograph \cite{Mir23}.

Notable efforts have been undertaken to extend the ISS concept to time-varying systems governed by ODEs in the past years, see \cite{TsK99,ELW00,KaT04,LWC05}.
In particular, in \cite{ELW00} the Lyapunov characterizations of ISS for time-varying nonlinear systems including
periodic time-varying systems have been studied.
As demonstrated in  \cite[p. 3502]{ELW00}, in contrast to time-invariant
ODE systems, in the time-variant case, ISS does not necessarily imply
iISS, even for systems with a sufficiently smooth right-hand side. A
detailed analysis of this question has been undertaken in \cite{HaM19}.

The success of the ISS theory of ODEs and the need for robust stability analysis of partial differential equations (PDEs) motivated the development of ISS theory in the infinite-dimensional setting, see \cite{DaM13,GGL21,jacob2018infinite,JSZ19,karafyllis2016iss,karafyllis2017iss,MiI15b,MiW18b,prieur2012iss,ZhZ18}.
For instance, the Lyapunov method was used in \cite{MiI15b} for analysis of iISS of nonlinear parabolic equations. In \cite{JMP20,MiW18b}, the notion of a non-coercive ISS Lyapunov function has been introduced and it was
demonstrated that the existence of such a function implies ISS for a broad class of nonlinear infinite-dimensional
systems.
In \cite{jacob2018infinite}, the relation between ISS and iISS has been studied for linear infinite-dimensional systems with a possibly unbounded control operator and inputs in general function spaces. In \cite{karafyllis2016iss}, the ISS property for 1-D parabolic PDEs with boundary disturbances has been considered. In \cite{ZhZ18}, ISS of a class of semi-linear parabolic PDEs with respect to boundary disturbances has been studied for the first time based on the Lyapunov method.
For an overview of the ISS theory for distributed parameter systems, we refer to \cite{CKP23,KaK19,MiP20}.

\textbf{Time-varying infinite-dimensional systems.}
Much less attention has been devoted to the ISS
of time-varying infinite-dimensional systems
\cite{damak2021input,hanen2022input}, a similar statement holds for iISS, \cite{mancilla2023characterization}.
For instance, in \cite{damak2021input}, a direct Lyapunov theorem was shown for a class of time-varying semi-linear evolution equations in Banach spaces with Lipschitz continuous nonlinearities.

The analysis of ISS for time-varying systems is more involved than that of time-invariant systems, even for linear systems. Consider an abstract Cauchy problem of the form

$$\dot{x}(t)=A(t)x(t),\quad x(t_0)=x_0, \quad t\geq t_0. $$

If $A$ does not depend on time, the Hille-Yosida theorem \cite[Theorem 3.1]{Paz83} gives a characterization of the infinitesimal
generators of strongly continuous semigroups, and hence, it characterizes all
well-posed autonomous abstract Cauchy problems.
At the same time, there is no extension of the Hille-Yosida theorem to the time-variant case.
The conditions that ensure that $\{A(t)\}_{t\geq0}$ generates an evolution family are well-understood if $\{A(t)\}_{t\geq0}$ is a family of bounded operators. However, when the operators $A(t)$ are unbounded, it is a delicate matter to prove the well-posedness of an abstract Cauchy problem. We refer the reader to
\cite{carmen,Paz83} and the references therein for more information. There are almost no results on ISS and iISS of time-varying systems with unbounded $A(t)$.

\textbf{Contribution.} Motivated by the foregoing discussion, this paper
investigates the (L)ISS/iISS property of abstract time-varying infinite-dimensional
systems in terms of (L)ISS/iISS Lyapunov functions. We also develop conditions
for the well-posedness of time-varying semi-linear evolution equations in
Banach spaces and derive stability results for evolution operators in
Banach spaces. Furthermore, it is shown that for each input-to-state
stable linear time-varying control system with a bounded linear part,
there is a coercive Lyapunov function. Also, we develop a method for the
construction of a non-coercive ISS Lyapunov function for a class of linear
time-varying control systems where the operators in the family
$\{A(t)\}_{t\geq0}$ are unbounded. Next, we derive a novel construction of
non-coercive LISS/iISS Lyapunov functions for a certain class of
time-varying semi-linear systems with the associated nominal system being
linear and unbounded that depends on the time.

The remainder of the paper is organized as follows. In
Section~\ref{sec:time-varying-control} an abstract class of time-varying
linear systems is introduced. In Section~\ref{sec:iss-lyap-funct}, the
central notion of input-to-state stability is defined in the abstract
context and it is shown how to use Lyapunov functions to verify the
(L)ISS/iISS of time-varying infinite-dimensional systems. We turn to more
concrete system descriptions in Section~\ref{sec:time-varying-semi}, where
a well-posedness analysis of time-varying semi-linear
evolution equations with locally Lipschitz continuous nonlinearities and
piecewise-right continuous inputs is presented. In Section~\ref{sec:lyap-crit-iss}, a Lyapunov
characterization for ISS of linear time-varying systems is presented in
Hilbert spaces. In Section 5, we show, how a non-coercive LISS/iISS
Lyapunov function can be constructed for time-varying semi-linear
systems. In Section~\ref{sec:examples}, the iISS/ISS analysis of
two parabolic PDEs is given to illustrate the proposed
method. Conclusions are drawn in Section~\ref{sec:conclusion}.

\textbf{Notation.} 
Throughout this paper, we adopt the following notation: $\mathbb{R}_{+}$
denotes the set of non-negative real numbers. By $\mathcal{T}$ we denote
the set of all the pairs $(t,s)\in \mathbb{R}_+^{2}$ with $t\geq s$. 
In a normed space $(X,\|\cdot\|_{X})$ the closed ball around zero is
denoted $B_{r}:=\{x\in X: \|x\|_{X}\leq r\}.$ The interior of 
$D\subset X$ is denoted by $\intt(D).$
For linear normed spaces
$X,Y$ let $L(X,Y)$ be the space of bounded linear operators from $X$ to
$Y,$ equipped with the usual operator norm $\|\cdot\|$; also $L(X) :=
L(X,X)$. If $X$ is a Hilbert space, then $A^{\ast}$ denotes the adjoint operator of
$A\in L(X)$. $I\in L(X)$ is the identity operator. By $C(X,Y)$, we denote the space of continuous functions from $X$ to $Y,$ $C(X):=C(X,X).$
$PC(\mathbb{R}_+, U)$ denotes the space of globally bounded, right
continuous, and piecewise continuous functions from $\mathbb{R}_+$ to $U$ with the norm $\|u\|_{\mathcal{U}}:=\sup_{0\leq s\leq \infty} \|u(s)\|_{U}.$
$PC^{1}(\mathbb{R}_+, U)$ denotes the space of 
continuous, piecewise right continuously differentiable functions from $\mathbb{R}_+$ to $U.$ We need the following spaces of integrable functions (here $\ell>0$):
\begin{itemize}
\item $L^{p}(0,\ell),\; 1\leq p < \infty$ is the space of $p$-th power integrable functions $f:(0,\ell)\to \mathbb{R}$ with the norm 
\[
\|f\|_{L^{p}(0,\ell)}=\left(\int _{0} ^{\ell}|f(x)|^{p}dx\right)^{\frac{1}{p}}.
\]
\item $C^{k}_{0}(0,\ell)$ is the space of $k$ times continuously
  differentiable functions $f:(0,\ell)\to \mathbb{R}$ with a compact support in $(0,\ell).$
\item $W^{p,k}(0,\ell)$ is a Sobolev space of functions $f\in L^{p}(0,\ell),$ which have weak derivatives of order $\leq k,$ all of which belong to $L^{p}(0,\ell).$
\item $W_{0}^{p,k}(0,\ell)$ is a closure of $C_{0}^{k}(0,\ell)$ in the norm of $W^{p,k}(0,\ell)$, $H^{k}(0,\ell)=W^{2,k}(0,\ell)$ and $H_{0}^{k}(0,\ell)=W_{0}^{2,k}(0,\ell).$
\end{itemize}
We use the following classes of comparison functions:
\begin{itemize}
\item $\mathcal{P}=\{\gamma:\mathbb{R}_+\to \mathbb{R}_+:\gamma \;\;\hbox{is continuous,} \gamma(0)=0\; \hbox{and}\;\gamma(r)>0 \;\hbox{for} \;r>0 \}.$
\item $\mathcal{K}=\{\gamma\in \mathcal{P}:\gamma \;\;\hbox{is strictly increasing}\}.$
\item $\mathcal{K}_{\infty}=\{\gamma\in \mathcal{K}:\gamma \;\;\hbox{is unbounded}\}.$
\item $\mathcal{L}=\{\gamma:\mathbb{R}_+\to \mathbb{R}_+:\gamma \;\;\hbox{is continuous and strictly decreasing with} \displaystyle\lim_{t\to \infty}\gamma(t)=0\}.$
\item $\mathcal{KL}=\{\beta\in C(\mathbb{R}_+\times\mathbb{R}_+,\mathbb{R}_+):\beta(\cdot,t)\in \mathcal{K}\ \forall t\geq0,\ \beta(r,\cdot)\in \mathcal{L}\ \forall r> 0\}.$
\end{itemize}
Functions in ${\cal P}$ are also called \emph{positive definite}.

\section{Time-varying control systems}
\label{sec:time-varying-control}

In this paper, we consider abstract axiomatically defined time-varying
systems on the state space $X$ following \cite{hinrichsen2005mathematical,hinrichsen2025mathematical,KaJ11,MiW18b,mancilla2023characterization}.
\begin{dfn}\label{csyol}(\hspace*{-3pt}\cite{mancilla2023characterization})
Consider a triple $\Sigma=(X,\mathcal{U},\phi),$ consisting of the following
components:
\begin{enumerate}
\item[$(i)$] A normed linear space $X,$ called \emph{the state space}, endowed with the
norm $\|\cdot\|_{X}$.
\item[$(ii)$] A set of admissible input values $U,$ which is a nonempty
  subset of a certain normed linear space with norm $\|\cdot\|_{U}$.
\item[$(iii)$] A normed linear space of input functions $\mathcal{U}
  \subset \{f: \mathbb{R}_+ \to U\}$ endowed with a norm
  $\|\cdot\|_{\mathcal{U}}.$ We assume that for ${\cal U}$ the following axiom holds:

\emph{Axiom of shift invariance}: for all $u\in\mathcal{U} $ and all $\tau\geq0,$ the time shift $u(\cdot+\tau)$ belongs to $\mathcal{U}$ with $\|u\|_{\mathcal{U}}\geq\|u(\cdot+\tau)\|_{\mathcal{U}}.$

\item[$(iv)$] A set $D_{\phi}\subseteq \mathcal{T}\times X\times
  \mathcal{U}$, and map $\phi:D_{\phi}\to X$, called the \emph{transition map}.
   For all $(t_0,x_0,u) \in \mathbb{R}_+\times X \times
  \mathcal{U},$ there exists a
  $t_m=t_m(t_0,x_0,u) \in (t_0,\infty) \cup\{\infty\}$ with
 $D_{\phi} \cap \bigl( [t_0,\infty)\times (t_0,x_0, u)\bigr)= [t_0,t_m)\times
 (t_0,x_0, u)$. 
\par The interval $[t_0,t_m)$ is called \emph{the maximal interval of
  existence of $t \mapsto \phi(t,t_0,x_0,u)$}, i.e., of the trajectory corresponding to the initial condition $x(t_0)=x_0$ and the input $u.$
\end{enumerate}
The triple $\Sigma$ is called \emph{a (control) system} if the following properties hold:
\begin{enumerate}
\item[$(\Sigma_{1})$] \emph{Identity property}: for every $(t_0,x_0,u)\in \mathbb{R}_+\times X\times \mathcal{U}$ it holds that $\phi(t_0,t_0,x_0,u)=x_0.$

\item[$(\Sigma_{2})$] \emph{Causality}: for every $(t, t_0, x_0,u)\in D_{\phi},$ and for every $\tilde{u}\in \mathcal{U}$ such that
$\tilde{u}(\tau)=u(\tau)$ for all $\tau\in [t_0,t),$ it holds that $(t,t_0,x_0,\tilde{u})\in D_{\phi}$ and $\phi(t,t_0,x_0,u)=\phi(t,t_0,x_0,\tilde{u}).$
\item[$(\Sigma_{3})$] \emph{Continuity}: for every $(t_0,x_0,u)\in
  \mathbb{R}_+\times X\times \mathcal{U}$ the map $t \mapsto
  \phi(t,t_0,x_0,u)$ is continuous on its maximal interval of existence $[t_0,t_m(t_0,x_0,u))$.
\item[$(\Sigma_{4})$] \emph{The cocycle property}: for all $t_0\geq0,$ all $x_0\in X,$ all $u\in \mathcal{U}$, and all $t\in [t_0,t_m(t_0,x_0,u)),$ we have
$\phi(t,\tau ,\phi(\tau,t_0,x_0,u),u)=\phi(t,t_0,x_0,u),$ for all $\tau\in [t_0,t].$
\end{enumerate}
\end{dfn}
\begin{dfn}
We say that a control system $\Sigma=(X,\mathcal{U},\phi)$ is \emph{forward complete} if $D_{\phi}=\mathcal{T}\times X\times\mathcal{U}$, that is, for every $(t_0,x_0,u) \in \mathbb{R}_+\times X\times \mathcal{U}$ and for all $t\geq t_0,$ the value $\phi(t,t_0,x_0,u)\in X$ is well-defined.
\end{dfn}

For a particular trajectory, $t\mapsto \phi(t,t_0,x_0,u)$, $t \in [t_0,
t_m(t_0,x,u))$, we will say that the trajectory \emph{exists globally} if
$t_m(t_0,x_0,u)= \infty$.
In general, the verification of forward completeness for time-varying
nonlinear systems is a complex task. In this context, a useful property that is satisfied by many time-varying infinite-dimensional control systems is the possibility to prolong bounded solutions to a larger interval.

\begin{dfn}(\cite{KaJ11})
We say that a control system $\Sigma=(X,\mathcal{U},\phi)$ satisfies the
\emph{boundedness-implies-continuation (BIC)} property if for each
$(t_0,x_0,u)\in \mathbb{R}_+\times X \times \mathcal{U},$ with
$t_m=t_m(t_0,x_0,u) < \infty$, and for every $M>0,$ there exists $t\in [t_0,t_m)$ with $\|\phi(t,t_0,x_0,u)\|_{X}> M.$
 \end{dfn}
To study the stability properties of time-varying control systems with
respect to external inputs, we introduce the following notion in the
spirit of \cite{Son89,MiW18b,mancilla2023characterization,TsK99}.
\begin{dfn}
System $\Sigma=(X,\mathcal{U},\phi)$ is called \emph{locally
  input-to-state stable} \emph{(LISS)} if there exist $\rho_{x},\rho_u>0$
and $\beta \in \mathcal{KL}, \gamma \in \mathcal{K},$ such that for all
$x_0$, $\|x_0\|_{X}\leq\rho_{x},$ all $t_0\geq 0$, and all $u \in
\mathcal{U}$, $\|u\|_{\mathcal{U}}\leq \rho_u$, the trajectory
$\phi(\cdot,t_0,x_0,u)$ exists on $[t_0,\infty)$ and for all $t\geq t_0$ it holds that
\begin{equation}\label{rhm21}
 \|\phi(t,t_0,x_0,u)\|_{X}\leq \beta(\|x_0\|_{X},t-t_0)+\gamma(\|u\|_{\mathcal{U}}).
\end{equation}
The control system is called \emph{input-to-state stable} \emph{(ISS)} if $\rho_x,\rho_u$ can be chosen to be equal to $\infty.$
\end{dfn}
 

For systems with piecewise continuous inputs, we define an ISS-like property in terms of the integral (\q{energy}) of the input.
\begin{dfn}\label{rhan23}
Let $\Sigma=(X, \mathcal{U},\phi)$ be a forward complete control system with $\mathcal{U}=PC(\mathbb{R}_+,U).$ System $\Sigma$ is called \emph{integral input-to-state stable (iISS)} if there exist $\alpha \in \mathcal{K}_{\infty},$ $\mu\in \mathcal{K}$ and $\beta\in\mathcal{KL}$, such that for all $x_0\in X,$ all $u\in \mathcal{U},$ all $t_0\geq 0,$ and all $t\geq t_0$ the following holds
\begin{equation}\label{EQua1in}
\|\phi(t,t_0,x_0,u)\|_{X}\leq \beta(\|x_0\|_{X},t-t_0)+\alpha \bigg{(}\int_{t_0} ^{t} \mu(\|u(s)\|_{U})ds\bigg{)}.
\end{equation}
\end{dfn}
\begin{dfn}
System $\Sigma=(X,\mathcal{U},\phi)$ is called \emph{uniformly globally
  asymptotically stable with zero input (\emph{0-UGAS})} if there exists $\beta \in \mathcal{KL}$
such that for all $x_0\in X$ and all $t_0\geq 0$ the corresponding trajectory $\phi(\cdot,t_0,x_0,0)$ exists globally, and for all $t\geq t_0$ it holds that
\begin{equation}\label{rhm1}
\|\phi(t,t_0,x_0,0)\|_{X}\leq \beta(\|x_0\|_{X},t-t_0).
\end{equation}
\end{dfn}
Substituting $u:=0$ into (\ref{rhm21}), we see that ISS systems are always 0-UGAS.

\section{ISS Lyapunov functions}
\label{sec:iss-lyap-funct}

In this section, we recall the concept of an ISS Lyapunov function. It is crucial for the verification of input-to-state stability of time-varying control systems.

Let $(t,x)\in \mathbb{R}_+ \times X$ and $W$ be an open neighborhood
  of $(t,x)$. Assume that $V:W \to \R_+$ is continuous. The Lie derivative of $V$ at $(t,x)\in \mathbb{R}_+ \times X$ corresponding to the input $u\in \mathcal{U}$ along the corresponding trajectory of $\Sigma=(X,\mathcal{U},\phi)$ is defined by
\begin{equation}
    \label{eq:Liederivativedef}
 \dot{V}_{u}(t,x):=D^{+}V(s,\phi(s,t,x,u))|_{s=t}:=\limsup\limits_{h\to0^+}\frac{1}{h}\Big(V(t+h,\phi(t+h,t,x,u))-V(t,x)\Big),
\end{equation}
where $D^{+}$ denotes the right upper Dini derivative. For this
  definition, note that $t_m(t,x,u) > t$ by assumption, so strictly
  speaking we define the limsup in \eqref{eq:Liederivativedef} by
  restricting $h>0$ to be sufficiently small.

\begin{dfn}\label{DEFINITION2}
Let $\Sigma=(X,\mathcal{U},\phi)$ be a time-varying control system, 
and $D\subset X$ be open with $0\in D$. A continuous function $V:\mathbb{R}_+\times D \to \mathbb{R}_+$ 
with $V(t,0)=0$, for all $t\geq 0$,  is called a \emph{non-coercive LISS Lyapunov function in implication form}
for the system $\Sigma$, if there exist $\alpha_2\in\mathcal{K}_{\infty}$,
$\kappa\in\mathcal{K}$,  a positive definite function $\mu$, and constants
$r_{1},r_{2}>0$, with $B_{r_1}\subset D,$ and such that
\begin{equation}\label{R1EN}
0<V(t,x)\leq \alpha_2(\|x\|_{X}) \quad \forall x\in D\backslash\{0\} \quad \forall t\geq0,
\end{equation}
and for all $t\geq0,$ all $x\in B_{r_{1}},$ all $u\in
\mathcal{U}$ with $\|u\|_{\mathcal{U}}\leq r_{2}$ it holds that
\begin{equation}\label{STAvNNv}
\|x\|_{X}\geq\kappa(\|u\|_{\mathcal{U}}) \quad\Longrightarrow\quad \dot{V}_{u}(t,x)\leq -\mu(V(t,x)).
\end{equation}
If, in addition, there is $\alpha_1\in\mathcal{K}_{\infty},$ so that
\begin{equation}\label{R1E}
 \alpha_1(\|x\|_{X}) \leq V(t,x)\leq \alpha_2(\|x\|_{X}) \quad \forall x\in D \quad \forall t\geq0,
\end{equation}
then $V$ is called a \emph{(coercive) LISS Lyapunov function in implication form}
for $\Sigma.$ The function $\kappa$ is called \emph{ISS Lyapunov gain} for $\Sigma.$

If in the previous definition $D=X,$ $r_1=\infty,$ and $r_2=\infty,$ then
$V$ is called a  non-coercive (coercive, if \eqref{R1E} holds) ISS Lyapunov function in implication form for $\Sigma.$
\end{dfn}

Since the flow $\phi$ is only continuous in time, the map $t\mapsto V(t,\phi(t,t_0,x_0,u))$ is only continuous, even if $V$ has a higher regularity.
We need the following comparison principle.
{
\begin{lem}\label{lemmm1}
For any $\theta\in \mathcal{P}$ there exists $\beta\in \mathcal{KL}$ so
that: If the differential inequality
\begin{equation}\label{bbb2}
D^{+}\omega(t)\leq-\theta(\omega(t)),\quad \forall\, t \geq t_0
\end{equation}
holds for a certain continuous function $\omega:\mathbb{R}_+\to \mathbb{R}_+$ and some $t_0\geq 0$, then it holds also
\begin{equation}
\label{eq:conclusionlemma1}
 \omega(t)\leq \beta(\omega(t_0),t-t_0)\quad 
\forall t\geq t_0.   
\end{equation}
\end{lem}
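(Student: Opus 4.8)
The plan is to reduce the differential inequality \eqref{bbb2} to a scalar autonomous ordinary differential equation to which the classical comparison principle applies, and then to read off $\beta$ directly from the solutions of that ODE. The only genuine difficulty is that $\theta\in\mathcal{P}$ need be neither locally Lipschitz nor monotone, so the scalar problem $\dot y=-\theta(y)$ may fail to have unique solutions; I would therefore first replace $\theta$ by a well-behaved minorant.

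\textbf{Step 1: a Lipschitz positive definite minorant.} I would construct a globally Lipschitz, positive definite $\tilde\theta\le\theta$ via the inf-convolution
\[
\tilde\theta(r):=\inf_{s\ge 0}\bigl(\theta(s)+|r-s|\bigr),\qquad r\ge 0.
\]
Taking $s=r$ gives $0\le\tilde\theta\le\theta$; $\tilde\theta$ is $1$-Lipschitz by the standard property of inf-convolutions; $\tilde\theta(0)=0$ since all summands are nonnegative and $\theta(0)=0$; and $\tilde\theta(r)>0$ for $r>0$, because if $\tilde\theta(r_0)=0$ then a minimizing sequence $s_n$ would satisfy $\theta(s_n)\to0$ and $s_n\to r_0$, forcing $\theta(r_0)=0$ by continuity, a contradiction. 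Hence $\tilde\theta$ is continuous, locally Lipschitz, positive definite, and $D^{+}\omega(t)\le-\theta(\omega(t))\le-\tilde\theta(\omega(t))$ for all $t\ge t_0$.

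\textbf{Step 2: the $\mathcal{KL}$ bound from the scalar ODE.} For each $r\ge0$ let $y(\cdot\,;r)$ be the unique solution of $\dot y=-\tilde\theta(y)$, $y(0;r)=r$. Since $\tilde\theta\ge0$ and vanishes only at $0$, $y(\cdot\,;r)$ is nonincreasing and stays in $[0,r]$, hence is globally defined, and $y(t;r)\to0$ as $t\to\infty$ (if the limit were some $c>0$, the derivative would tend to $-\tilde\theta(c)<0$). By uniqueness, trajectories do not cross, so $r\mapsto y(t;r)$ is nondecreasing, and $(t,r)\mapsto y(t;r)$ is jointly continuous by continuous dependence on initial data. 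To certify strict monotonicity in $r$ and the required decay without further case analysis, I would set
\[
\beta(r,t):=y(t;r)+r\,e^{-t}.
\]
Then $\beta(\cdot,t)$ is continuous, vanishes at $0$, is strictly increasing and unbounded, so $\beta(\cdot,t)\in\mathcal{K}_\infty$; and $\beta(r,\cdot)$ is continuous, strictly decreasing, and tends to $0$, so $\beta(r,\cdot)\in\mathcal{L}$. Thus $\beta\in\mathcal{KL}$, and $\beta$ depends only on $\theta$ (through $\tilde\theta$), as required.

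\textbf{Step 3: conclusion via the comparison principle.} Finally I would apply the classical comparison principle for the upper right Dini derivative, which is valid here because $\omega$ is continuous and $u\mapsto-\tilde\theta(u)$ is locally Lipschitz: from $D^{+}\omega(t)\le-\tilde\theta(\omega(t))$ for $t\ge t_0$ together with $\omega(t_0)=y(0;\omega(t_0))$ one obtains $\omega(t)\le y(t-t_0;\,\omega(t_0))$ for all $t\ge t_0$, whence $\omega(t)\le\beta(\omega(t_0),t-t_0)$, which is \eqref{eq:conclusionlemma1}. The main obstacle is precisely the lack of Lipschitz continuity (and monotonicity) of $\theta$; once this is absorbed into the minorant $\tilde\theta$ of Step 1, the remainder is routine verification that the constructed $\beta$ belongs to $\mathcal{KL}$ and an invocation of the scalar comparison lemma.
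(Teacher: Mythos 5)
The paper does not actually prove this lemma---it is stated with a pointer to \cite[Proposition A.35]{Mir23}---so there is no in-paper argument to compare against; your proof is correct, self-contained, and follows the same standard route as that reference: regularize $\theta\in\mathcal{P}$ to a (globally) Lipschitz positive definite minorant $\tilde\theta$ via inf-convolution, take $\beta$ essentially as the flow of $\dot y=-\tilde\theta(y)$ (with the additive term $r e^{-t}$ to enforce strict monotonicity in $r$ and strict decay in $t$), and invoke the Dini-derivative comparison principle, which is legitimate precisely because $\tilde\theta$ is locally Lipschitz. The individual verifications are all sound: $\tilde\theta$ is $1$-Lipschitz, sandwiched by $0\le\tilde\theta\le\theta$, and positive definite (your minimizing-sequence argument works since $|r_0-s_n|\to 0$ forces $s_n\to r_0$); $y(\cdot;r)$ is globally defined, nonincreasing, converges to $0$, and depends monotonically and continuously on $r$ by uniqueness and continuous dependence; hence $\beta\in\mathcal{KL}$ and depends only on $\theta$. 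The only cosmetic point is that when invoking the comparison lemma you should either extend $u\mapsto-\tilde\theta(u)$ to negative arguments or remark that both $\omega$ and $y$ take values in $\mathbb{R}_+$, where $\tilde\theta$ is defined---this is immediate and does not affect correctness.
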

}
We skip the proof because it closely parallels the proof of \cite[Proposition A.35. p.328]{Mir23}.
The following is an easy consequence of Lemma~\ref{lemmm1}. Note that
this also provides a localized version of Lemma~\ref{lemmm1} if we
consider the case $\eta\equiv 0$.
\begin{cor}\label{cor1} 
For {any} $\theta\in\mathcal{P},$ there exists $\beta\in\mathcal{KL}$ such that for all $\omega\in C([t_0,t_0+\tau), \mathbb{R}_+)$ and all
$\eta\in PC ([t_0,t_0+\tau),\mathbb{R}_+)$ if for given $t_0\ge 0$,
$\tau\in(0,\infty]$ we have that 
\begin{equation*}
    D^{+}\omega(t)\leq-\theta(\omega(t))+\eta(t)
\end{equation*}
holds for all $t\in [t_0,t_0+\tau),$ then for all $t\in[t_0,t_0+\tau)$ it holds that
\begin{equation*}
    \omega(t)\leq\beta(\omega(t_0),t-t_0)+2\int _{t_0}^{t} \eta(s)ds.
\end{equation*}
\end{cor}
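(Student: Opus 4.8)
The plan is to reduce the inhomogeneous differential inequality to the homogeneous one handled by Lemma~\ref{lemmm1} via a standard splitting argument. Fix $\theta\in\mathcal P$. First I would pass to a more convenient comparison function: since $\theta$ need not be monotone, replace it by $\tilde\theta(r):=\min\{\theta(s):\ r/2\le s\le r\}$ for $r>0$ (and $\tilde\theta(0):=0$), which one checks is still in $\mathcal P$ and satisfies $\tilde\theta\le\theta$ together with the useful property that $\tilde\theta(r)\le\theta(s)$ whenever $r/2\le s\le r$. Apply Lemma~\ref{lemmm1} to $\tilde\theta$ to obtain the $\beta\in\mathcal{KL}$ that will appear in the conclusion. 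The point of working with $\tilde\theta$ is that on any sub-interval where $\omega$ has not yet dropped below half of a reference value, the decay term dominates in a way that survives the presence of $\eta$.

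Next I would run a dichotomy on a fixed time $t\in[t_0,t_0+\tau)$. Let $E:=2\int_{t_0}^{t}\eta(s)\,ds$ and set $c:=\omega(t_0)$. Consider the set $S:=\{s\in[t_0,t]:\ \omega(s)\le E\}$. \emph{Case 1:} if $\omega(s)>E$ for all $s\in[t_0,t]$, then on $[t_0,t]$ we have $\omega(s)>E\ge 2\int_{t_0}^{s}\eta$, and I would show that along this interval $D^+\omega(s)\le-\theta(\omega(s))+\eta(s)$ forces $\omega$ to be dominated by the solution of the homogeneous inequality with $\tilde\theta$: roughly, whenever $\omega(s)$ stays above the current ``budget'' $2\int_{t_0}^{s}\eta$, the positive perturbation $\eta(s)$ is small relative to $\theta(\omega(s))$, so $D^+\omega(s)\le-\tfrac12\theta(\omega(s))$ or a similar clean bound, and Lemma~\ref{lemmm1} (applied to a suitably scaled $\tilde\theta$) gives $\omega(t)\le\beta(c,t-t_0)$, which is even stronger than needed. \emph{Case 2:} if $S\ne\emptyset$, let $\sigma:=\sup S$; by continuity $\omega(\sigma)\le E$, and on $(\sigma,t]$ (if nonempty) we are back in the situation of Case~1 started at $\sigma$ with initial value $\le E$, so the homogeneous comparison on $(\sigma,t]$ yields $\omega(t)\le\beta(\omega(\sigma),t-\sigma)+({\rm small})\le\beta(E,0)+\cdots$; combined with $\beta(c,t-t_0)\ge 0$ and monotonicity of $\beta$ in its first argument, one absorbs everything into $\beta(c,t-t_0)+E=\beta(\omega(t_0),t-t_0)+2\int_{t_0}^t\eta(s)\,ds$. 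The localized statement for $\eta\equiv0$ is then immediate: $E=0$, Case~2 is vacuous unless $\omega$ already vanished, and Case~1 is exactly Lemma~\ref{lemmm1} restricted to $[t_0,t_0+\tau)$.

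I expect the main obstacle to be making the ``perturbation is dominated by the decay'' step rigorous with the right bookkeeping, because $\eta$ is only piecewise continuous and $\omega$ is only continuous, so one cannot differentiate or invoke uniqueness of ODE solutions; everything must be phrased through the Dini derivative $D^+$ and the comparison Lemma~\ref{lemmm1}. In particular, care is needed to justify that $D^+\big(\omega(s)-2\int_{t_0}^{s}\eta\big)\le -\tilde\theta(\omega(s))+\eta(s)-2\eta(s)=-\tilde\theta(\omega(s))-\eta(s)\le 0$ at points where $\omega(s)$ exceeds the budget, which is what actually keeps $\omega$ under control, and then to feed this into Lemma~\ref{lemmm1} cleanly. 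Once that estimate is in place, the constant $2$ in front of the integral is exactly what is needed to make the ``budget'' argument close, and no finer analysis is required.
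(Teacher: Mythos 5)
The paper omits its own proof (deferring to \cite{Mir23b}), so I judge your proposal on its merits. Your overall plan --- modify $\theta$, shift $\omega$ by the input budget, reduce to Lemma~\ref{lemmm1} --- is the right one, but the step that carries the whole argument is false as stated. In Case~1 you claim that if $\omega(s)$ stays above the budget $2\int_{t_0}^{s}\eta$, then ``the positive perturbation $\eta(s)$ is small relative to $\theta(\omega(s))$'', giving $D^{+}\omega(s)\le-\tfrac12\theta(\omega(s))$. A bound on the \emph{integral} of $\eta$ gives no pointwise control of $\eta(s)$: take $\theta(r)=r$, $\omega(s)=10$, $\int_{t_0}^{s}\eta=1$, and let $\eta$ have a thin spike with $\eta(s)=10^{3}$ at the point in question; then $\omega(s)>2\int_{t_0}^{s}\eta$ yet $-\theta(\omega(s))+\eta(s)=990$. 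Your fallback computation $D^{+}\bigl(\omega(s)-2\int_{t_0}^{s}\eta\bigr)\le-\theta(\omega(s))-\eta(s)\le0$ is correct but only yields \emph{monotonicity} of the shifted variable, not $\mathcal{KL}$ decay: to invoke Lemma~\ref{lemmm1} you must dominate $-\theta(\omega(s))$ by $-\hat\theta(z(s))$ for a positive definite function $\hat\theta$ of the shifted variable $z$ itself, and with your shift (coefficient $2$) and your threshold (the \emph{constant} $E=2\int_{t_0}^{t}\eta$) the needed inclusion $\omega(s)\in[z(s),Kz(s)]$ is simply not available (e.g.\ $\omega(s)=E+\varepsilon$ with $2\int_{t_0}^{s}\eta=E$ gives $z(s)=\varepsilon$ arbitrarily small while $\omega(s)$ is large). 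Case~2 also does not close: the absorption ``$\beta(E,0)+\cdots\le\beta(\omega(t_0),t-t_0)+E$'' presumes $\beta(E,0)\le E$, which the comparison lemma does not provide, and tracking your constants honestly produces a factor $3$ or $4$, not $2$, in front of the integral.

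The argument closes if you shift by the budget with coefficient $1$ and use a time-varying threshold. Set $H(s):=\int_{t_0}^{s}\eta(r)\,dr$ and $y:=\omega-H$, so that $D^{+}y\le-\theta(\omega)\le0$; since $y$ is nonincreasing and $H$ is nondecreasing, the set $G:=\{s:\omega(s)\ge 2H(s)\}=\{s:y(s)\ge H(s)\}$ is an initial subinterval of $[t_0,t_0+\tau)$. On $G$ one has $0\le y(s)\le\omega(s)\le 2y(s)$, hence $\theta(\omega(s))\ge\hat\theta(y(s))$ with $\hat\theta(r):=\min_{\sigma\in[r,2r]}\theta(\sigma)\in\mathcal{P}$ (this is the correct role of your $\tilde\theta$), so $D^{+}y\le-\hat\theta(y)$ on $G$ and the (localized) Lemma~\ref{lemmm1} yields $y(t)\le\beta(\omega(t_0),t-t_0)$, i.e.\ $\omega(t)\le\beta(\omega(t_0),t-t_0)+H(t)$; for $t\notin G$ one has $\omega(t)<2H(t)$ directly. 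Either way the stated estimate with the constant $2$ follows. Your instinct that the Dini-derivative bookkeeping and the piecewise continuity of $\eta$ are the delicate points is reasonable, but the actual obstruction is the missing positive-definite lower bound on $\theta(\omega)$ in terms of the shifted variable, and the mismatch between your shift and your threshold.
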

The proof here is a straightforward extension of the proof of \cite[Proposition B.4.3, p.240]{Mir23b} and is omitted.

The importance of ISS Lyapunov functions is due to the following
result. The proof follows ideas already developed in \cite[p. 441]{Son89} and
extended to an infinite-dimensional, time-invariant setting in
\cite[Theorem 1.5.4]{Mir23b}.
\begin{thm}\label{hjMMMMMljg}
Let $\Sigma=(X,\mathcal{U},\phi)$ be a time-varying control system
satisfying the BIC property. If there exists a
coercive (L)ISS Lyapunov function $V$ for $\Sigma$, then $\Sigma$ is (L)ISS.
\end{thm}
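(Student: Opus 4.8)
The plan is to run the standard Sontag-style argument adapted to the time-varying abstract setting, establishing the two "halves" of the ISS estimate (the transient decay term and the input-gain term) and then combining them into a $\beta(\|x_0\|_X,t-t_0)+\gamma(\|u\|_{\mathcal U})$ bound. I will give the argument for the coercive LISS case; the ISS case follows by taking $D=X$, $r_1=r_2=\infty$ throughout. Fix $(t_0,x_0,u)$ with $\|x_0\|_X$ and $\|u\|_{\mathcal U}$ small enough (to be quantified), let $x(\cdot)=\phi(\cdot,t_0,x_0,u)$ on its maximal interval $[t_0,t_m)$, and set $y(t):=V(t,x(t))$, which is continuous by $(\Sigma_3)$ and continuity of $V$.

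First I would establish \emph{existence on $[t_0,\infty)$ together with a uniform bound}. Using \eqref{R1E} and the implication \eqref{STAvNNv}: as long as $\|x(t)\|_X\le r_1$ and $\|x(t)\|_X\ge\kappa(\|u\|_{\mathcal U})$ one has $D^+y(t)\le-\mu(y(t))$, so $y$ is non-increasing on such intervals; in particular $y(t)\le y(t_0)\le\alpha_2(\|x_0\|_X)$, hence $\|x(t)\|_X\le\alpha_1^{-1}(\alpha_2(\|x_0\|_X))$. Choosing $\rho_x,\rho_u>0$ small enough that $\alpha_1^{-1}(\alpha_2(\rho_x))<r_1$, $\alpha_1^{-1}(\alpha_2(\kappa(\rho_u)))<r_1$, and $\kappa(\rho_u)\le r_1$, a continuity/bootstrap argument shows the trajectory never leaves $B_{r_1}$ on $[t_0,t_m)$; then the BIC property forces $t_m=\infty$. (The sublety here — and the place I expect to spend the most care — is the bookkeeping near the "switching surface" $\|x(t)\|_X=\kappa(\|u\|_{\mathcal U})$: one must argue that once $y$ drops to the level $\alpha_2(\kappa(\|u\|_{\mathcal U}))$ it cannot exceed it afterward, using that whenever $\|x(t)\|_X>\kappa(\|u\|_{\mathcal U})$ the derivative is strictly negative, so the set $\{t:\|x(t)\|_X\le\kappa(\|u\|_{\mathcal U})\}$ is "forward invariant" for $y$ at that level. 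This is a standard but slightly fiddly invariance argument.)

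Next, the \emph{decay estimate}. On the initial time interval where $\|x(t)\|_X\ge\kappa(\|u\|_{\mathcal U})$ we have $D^+y(t)\le-\mu(y(t))$; since $\mu\in\mathcal P$, Lemma~\ref{lemmm1} yields a $\tilde\beta\in\mathcal{KL}$ (depending only on $\mu$) with $y(t)\le\tilde\beta(y(t_0),t-t_0)\le\tilde\beta(\alpha_2(\|x_0\|_X),t-t_0)$ on that interval. Combining with $\|x(t)\|_X\le\alpha_1^{-1}(y(t))$ gives $\|x(t)\|_X\le\alpha_1^{-1}(\tilde\beta(\alpha_2(\|x_0\|_X),t-t_0))=:\beta(\|x_0\|_X,t-t_0)$, and a routine check shows $\beta\in\mathcal{KL}$. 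For times after $y$ has reached the level $\alpha_2(\kappa(\|u\|_{\mathcal U}))$, the invariance step above gives $y(t)\le\alpha_2(\kappa(\|u\|_{\mathcal U}))$, hence $\|x(t)\|_X\le\alpha_1^{-1}(\alpha_2(\kappa(\|u\|_{\mathcal U})))=:\gamma(\|u\|_{\mathcal U})$, with $\gamma\in\mathcal K$.

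Finally I would \emph{patch the two regimes}. For any $t\ge t_0$, either $\|x(\tau)\|_X\ge\kappa(\|u\|_{\mathcal U})$ for all $\tau\in[t_0,t]$ — in which case the decay estimate gives $\|x(t)\|_X\le\beta(\|x_0\|_X,t-t_0)$ — or there is a last time $t^\ast\le t$ with $\|x(t^\ast)\|_X=\kappa(\|u\|_{\mathcal U})$, after which the invariance gives $\|x(t)\|_X\le\gamma(\|u\|_{\mathcal U})$. In both cases $\|x(t)\|_X\le\beta(\|x_0\|_X,t-t_0)+\gamma(\|u\|_{\mathcal U})$, which is \eqref{rhm21}; this proves LISS. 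Setting $D=X$ and $r_1=r_2=\rho_x=\rho_u=\infty$ throughout makes every constraint vacuous and the same computation yields ISS. $\hfill\Box$
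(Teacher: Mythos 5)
Your proposal is correct and follows essentially the same route as the paper: both arguments hinge on forward invariance of the sublevel set $\{x: V(t,x)\le\alpha_2\circ\kappa(\|u\|_{\mathcal U})\}$ (your "invariance near the switching surface"), the comparison Lemma~\ref{lemmm1} for the decay phase outside that set, the BIC property to rule out finite escape, and patching the two regimes into the LISS estimate. The only cosmetic difference is that the paper formalizes the invariance step via the time-dependent family $\Lambda_{u,t}$ and a Dini-derivative contradiction at the first exit time, and invokes the localized Corollary~\ref{cor1} (with $\eta\equiv0$) where you cite Lemma~\ref{lemmm1} on a finite interval.
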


\begin{proof}
Let $V$ be a coercive LISS Lyapunov function for
$\Sigma=(X,\mathcal{U},\phi)$ and let $\alpha_1,\alpha_2,\kappa,\mu,r_1,r_2$
be as in Definition~\ref{DEFINITION2}. 
Choose $0< \hat{r}_1 < r_1$. 
Define $\rho_1:=\min\{\alpha_2^{-1}\circ\alpha_1(\hat{r}_1),\alpha_1(\hat{r}_1)\}$ and $\rho_2:=\min\{r_2,\kappa^{-1}\circ\alpha_2^{-1}(\rho_1)\}.$
Take an arbitrary control $u\in \mathcal{U},$ with
$\|u\|_{\mathcal{U}}\leq\rho_2,$ and define for $t\geq 0$ 
\begin{equation*}
    \Lambda_{u,t}:=\{x\in D: V(t,x)\leq \alpha_2\circ\kappa(\|u\|_{\mathcal{U}})\}.
\end{equation*}

Since $\|u\|_{\mathcal{U}}\leq\rho_2$,  for any $t\geq0,$ and any $x\in
\Lambda_{u,t},$ it holds using \eqref{R1E} that
\begin{equation}
\label{eq:estimate_rho1}
    \alpha_1(\|x\|_{X}) \leq V(t,x)\leq \alpha_2\circ\kappa(\|u\|_{\mathcal{U}})\leq\alpha_2\circ\kappa(\rho_2)\leq \rho_1,
\end{equation}
and so $\|x\|_{X}\leq \alpha_1^{-1}(\rho_1) \leq \hat{r}_1$. On the other hand,
if $\|x\|_X\leq\kappa(\|u\|_{\mathcal{U}})$ then for all $t\geq 0$ we
have $V(t,x) \leq \alpha_2(\|x\|_X) \leq
\alpha_2\circ\kappa(\|u\|_{\mathcal{U}})$ and so $x\in \Lambda_{u,t}$.
This implies that $ B_{\kappa(\|u\|_{\mathcal{U}})}\subset
\Lambda_{u,t}\subset B_{\hat{r}_1}\subset B_{r_1} \subset D$ for all $t\geq0$.

\noindent {\bf{Step 1:}} We are going to prove that the family $\{\Lambda_{u,t}\}_{t\geq0}$ is forward invariant in the following sense:
\begin{equation}\label{impl1}
 t_0\in\mathbb{R}_+ \wedge x_0\in\Lambda_{u,t_0} \wedge \;
 w\in\mathcal{U}\wedge\;
 \|w\|_{\mathcal{U}}\leq\|u\|_{\mathcal{U}}\qrq \forall t\geq t_0
 : \phi(t,t_0,x_0,w)\in\Lambda_{u,t}.
\end{equation}
Suppose that this implication does not hold for certain $t_0\geq0,$ $x_0\in\Lambda_{u,t_0}$ and $w\in\mathcal{U}$ satisfying
$\|w\|_{\mathcal{U}}\leq\|u\|_{\mathcal{U}}.$ There are two alternatives.

First, it can happen that the maximal existence time $t_m(t_0,x_0,w)$ of the map $\phi(\cdot,t_0,x_0,w)$ is finite and $\phi(t,t_0,x_0,w)\in\Lambda_{u,t}$ for $t\in[t_0,t_m(t_0,x_0,w)).$ 

However, $\Lambda_{u,t}\subset B_{r_1}$ for all $t\geq0$, and thus the trajectory is uniformly bounded on its whole domain of existence.
Then $t_m(t_0,x_0,w)$ cannot be the maximal existence time, as the trajectory can be prolonged to a larger interval thanks to the BIC property of $\Sigma.$

 
The second alternative is that there exist $t\in(t_0,t_m(t_0,x_0,w))$ and $\epsilon>0$ so that 
\[
V(t,\phi(t,t_0,x_0,w))>\alpha_2\circ\kappa(\|u\|_{\mathcal{U}})+\epsilon.
\]
Let
$$\hat{t}:=\inf\{t\in[t_0,t_m(t_0,x_0,w)):\; V(t,\phi(t,t_0,x_0,w))>\alpha_2\circ\kappa(\|u\|_{\mathcal{U}})+\epsilon\}.$$
As $x_0\in\Lambda_{u,t_0}$ and $t\mapsto\phi(t,t_0,x_0,w)$ is
continuous, we obtain with \eqref{R1E} that
\begin{equation}
\label{eq:estimatekappa}
  \alpha_2(\|\phi(\hat{t},t_0,x_0,w)\|_{X})\geq
V(\hat{t},\phi(\hat{t},t_0,x_0,w))=\alpha_2\circ\kappa(\|u\|_{\mathcal{U}})+\epsilon
>\alpha_2\circ\kappa(\|w\|_{\mathcal{U}}).      
\end{equation}
Arguing as in \eqref{eq:estimate_rho1}, we also see
that $\|\phi(\hat{t},t_0,x_0,w)\|_{X} \leq \alpha_1^{-1}(\rho_1 +
\varepsilon)$ and by choosing $\varepsilon>0$ sufficiently small we may assume
$\|\phi(\hat{t},t_0,x_0,w)\|_{X} < r_1$.
Together with \eqref{eq:estimatekappa} this implies that \eqref{STAvNNv}
is applicable and we obtain that  
\begin{equation*}
    D^{+}V(\hat{t},\phi(\hat{t},t_0,x_0,w))\leq -\mu(V(\hat{t},\phi(\hat{t},t_0,x_0,w))).
\end{equation*}

Denote $\myV(t):=V(t,\phi(t,t_0,x_0,w))$, $t\in [0, t_m(t_0,x_0,w))$. Suppose that there is
$(t_k)\subset(\hat{t},t_m(t_0,x_0,w))$, such that $t_k\rightarrow \hat{t}$ as
$k\rightarrow \infty$ and $\myV(t_k)\geq \myV(\hat{t}\,)$ for all
$k\in \N$. It follows that,
$$0 \leq \limsup\limits_{k\to\infty}\frac{\myV(t_k)-\myV(\hat{t}\,)}{t_k-\hat{t}}\leq D^{+}\myV(\hat{t}\,)\leq-\mu(\myV(\hat{t}\,))<0,$$
a contradiction. Thus, there is some $\delta>0$ such that $\myV(t)<\myV(\hat{t}\,)=\alpha_2\circ\kappa(\|u\|_{\mathcal{U}})+\epsilon$ for all $t\in(\hat{t},\hat{t}+\delta).$ This contradicts the definition of $\hat{t}.$ The statement (\ref{impl1}) is shown.

We note for further reference that for any initial time $t_0\geq0$ and any initial condition $x_0\in\Lambda_{u,t_0}$ we have
$$\alpha_1(\|\phi(t,t_0,x_0,u)\|_{X})\leq V(t,\phi(t,t_0,x_0,u))\leq\alpha_2\circ\kappa(\|u\|_{\mathcal{U}}) \quad \forall t\geq t_0,$$
which implies that for all $t_0\geq0$ and all $t\geq t_0$
\begin{equation}\label{inequa1}
\|\phi(t,t_0,x_0,u)\|_{X}\leq\gamma(\|u\|_{\mathcal{U}}),
\end{equation}
where $\gamma=\alpha_1^{-1}\circ\alpha_2\circ\kappa.$

\noindent {\bf{Step 2:}} 
Recall that $u\in \mathcal{U},$ with $\|u\|_{\mathcal{U}}\leq\rho_2.$ Pick
an arbitrary initial time $t_0\geq0$ and any initial condition $x_0$ with
$\|x_0\|_{X}\leq\rho_{1}$. The case $x_0\in \Lambda_{u,t_0}$ has been treated in Step 1 of this proof. In Step 2, we consider the case $x_0\notin\Lambda_{u,t_0}$.  Due to the continuity of the flow map $t\mapsto\phi(t,t_0,x_0,u)$ and of the Lyapunov function $V$, there exists $\tau>0$ such that $\phi(t,t_0,x_0,u)\notin\Lambda_{u,t}$ for all $t\in [t_0,t_0+\tau].$

Using the axiom of shift invariance (Definition~\ref{csyol}\,(iii)) for $\Sigma$ and employing the
sandwich bounds \eqref{R1E}, we get 
\begin{equation*}
\alpha_2(\|\phi(t,t_0,x_0,u)\|_{X})\geq
V(t,\phi(t,t_0,x_0,u))\geq\alpha_2\circ\kappa(\|u\|_{\mathcal{U}}),\quad t
\in [t_0,t_0+\tau],
\end{equation*}
and using that $\alpha_2\in\mathcal{K}_{\infty},$ we obtain that $\|\phi(t,t_0,x_0,u)\|_{X}\geq\kappa(\|u\|_{\mathcal{U}})$ for $t\in[t_0,t_0+\tau].$

As long as $\|\phi(t,t_0,x_0,u)\|_{X}\leq r_1,$ and $t \in
[t_0,t_0+\tau]$,  we obtain from \eqref{STAvNNv} that
\begin{equation}\label{INE11}
D^{+}V(t,\phi(t,t_0,x_0,u))=\dot{V}_{u}(t,\phi(t,t_0,x_0,u))\leq -\mu(V(t,\phi(t,t_0,x_0,u))).
\end{equation}
Then, for all such $t,$ it holds that $V(t,\phi(t,t_0,x_0,u))\leq V(t_0,x_0),$ which implies that
$$\|\phi(t,t_0,x_0,u)\|_{X}\leq\alpha_1^{-1}\circ\alpha_2(\|x_0\|_{X})\leq\alpha_1^{-1}\circ\alpha_2(\rho_1)\leq r_1.$$
Thus, (\ref{INE11}) is valid for all $t\in [t_0,t_0+\tau].$\\
Appealing to Lemma~\ref{lemmm1} and
Corollary~\ref{cor1} with $\eta\equiv 0$, there exists a $\hat{\beta}\in
\mathcal{KL}$ such that the estimate
\eqref{eq:conclusionlemma1} holds on $[t_0,t_0+\tau)$. 
Since the map $t \mapsto V(t,\phi(t,t_0,x_0,u))$ is continuous, we infer that
\begin{equation*}
    V(t,\phi(t,t_0,x_0,u))\leq
\hat{\beta}(V(t_0,x_0),t-t_0),\quad t\in[t_0,t_0+\tau]
\end{equation*}
and defining $\beta(s,t):=\alpha_1^{-1}\circ
\hat{\beta}(\alpha_2^{-1}(s),t)$, $s,t\geq0$, we have
\begin{equation}\label{EE11}
\|\phi(t,t_0,x_0,u)\|_{X}\leq \beta(\|x_0\|_{X},t-t_0),\quad  \quad  t\in[t_0,t_0+\tau].
\end{equation}
The estimate (\ref{EE11}) together with the BIC property ensure that the solution can be
prolonged to an interval that is larger than $[t_0,t_0+\tau],$ and if on this interval $\phi(t,t_0,x_0,u) \notin \Lambda_{u,t},$ then the estimate (\ref{EE11}) holds on this larger interval. This shows
that $\phi(t,t_0,x_0,u)$ exists at least until the time when it intersects $\Lambda_{u,t}$.
As $B_{\kappa(\|u\|_{\mathcal{U}})}\subset \Lambda_{u,t}$ for all
$t\geq0$, the estimate \eqref{EE11} implies that there exists $0<\bar{t}<\infty$ such that
\begin{equation*}
    \bar{t}=\inf\{t\geq t_0:\phi(t,t_0,x_0,u)\in \Lambda_{u,t}\}.
\end{equation*}
By the above reasoning, \eqref{EE11} holds on $[t_0,\bar{t}]$.

From the forward invariance of the family  $\{\Lambda_{u,t}\}_{t\geq0}$,
the estimate (\ref{inequa1}) is valid for all $\bar{t}\leq t <
t_m(t_0,x_0,u)$ and by the BIC property we have $t_m(t_0,x_0,u)= \infty$. From (\ref{inequa1}) and (\ref{EE11}), we conclude that
\begin{equation*}
\|\phi(t,t_0,x_0,u)\|_{X}\leq\beta(\|x_0\|_{X},t-t_0)+\gamma(\|u\|_{\mathcal{U}})\quad  \forall t\geq t_0.    
\end{equation*}
In our reasoning $t_0\geq0$, $x_0 \in B_{\rho_1}$, and $u \in {\cal U}$
with $\|u\|_{\mathcal{U}}\leq\rho_2$ were arbitrary and the constructed
functions $\beta,\gamma$ did not depend on the specific choice of these variables. Thus, combining (\ref{inequa1}) and (\ref{EE11}), we obtain the LISS estimate for $\Sigma$ in $(t,t_0,x_0,u)\in[t_0,\infty)\times\mathbb{R}_+\times B_{\rho_1}\times B_{\rho_2,\mathcal{U}}.$

To prove that the existence of an ISS Lyapunov function ensures that
$\Sigma$ is ISS, we argue as above but with $r_1=r_2=\infty.$
\end{proof}

We will now define the notion of a (non)-coercive iISS Lyapunov function and prove that the existence of a coercive iISS (resp. coercive ISS) Lyapunov function implies iISS (resp. ISS).
\begin{dfn}
Consider a time-varying control system $\Sigma=(X,\mathcal{U},\phi)$ with the input space $\mathcal{U}=PC(\mathbb{R}_+, U).$
A continuous function $V:\mathbb{R}_+\times X \to \mathbb{R}_+$ is called
a non-coercive iISS Lyapunov function for $\Sigma$, if there exist
$\alpha_2\in \mathcal{K}_{\infty},$ $\eta\in \mathcal{P}$ and $\chi\in
\mathcal{K},$ such that for all $t\geq 0,$ $V(t,0)=0,$ the bounds \eqref{R1EN} hold,
and the Lie derivative along the trajectories of the system $\Sigma$
satisfies for all $x\in X,$ all $u\in \mathcal{U},$ and all $t\geq 0$
\begin{equation}\label{R2E}
\dot{V}_{u}(t,x)\leq -\eta(\|x\|_{X})+\chi(\|u(t)\|_{U}).
\end{equation}
If, in addition, there is $\alpha_1\in\mathcal{K}_{\infty},$ such that (\ref{R1E}) holds,
then $V$ is called a (coercive) iISS Lyapunov function for $\Sigma.$
If
\begin{equation}\label{R1Ev}
\displaystyle\lim _{\tau \to \infty} \eta(\tau)=\infty \; \;\text{ or } \; \;\liminf _{\tau \to \infty}\eta(\tau)\geq \lim _{\tau \to \infty}\chi(\tau),
\end{equation}
 then $V$ is called a (coercive or noncoercive) ISS Lyapunov function for $\Sigma$ in a dissipative form.
\end{dfn}

The next theorem underlines the importance of iISS and ISS Lyapunov functions.
\begin{thm}\label{hjljg}
Let $\mathcal{U}=PC(\mathbb{R}_+, U)$.
Consider a time-varying control system $\Sigma=(X,\mathcal{U},\phi),$ satisfying the BIC property. If there is a coercive \emph{iISS} (resp. coercive ISS) Lyapunov function in dissipative form for $\Sigma,$ then $\Sigma$ is iISS (resp. ISS).
\end{thm}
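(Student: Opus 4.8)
The plan is to transfer both claims to the scalar function $w(t):=V(t,\phi(t,t_0,x_0,u))$, which is continuous since $V$ is continuous and $\phi$ is continuous in time. First I would note that $\Sigma$ is automatically forward complete, which is needed for Definition~\ref{rhan23} to be meaningful: from \eqref{R2E} and $\|u(t)\|_U\le\|u\|_{\mathcal{U}}$ we get $D^{+}w(t)\le\chi(\|u\|_{\mathcal{U}})$ on the maximal interval of existence of the trajectory, so integrating this Dini inequality (licit since $w$ is continuous and $t\mapsto\chi(\|u(t)\|_U)$ is piecewise continuous) yields $w(t)\le w(t_0)+\chi(\|u\|_{\mathcal{U}})(t-t_0)$; hence by \eqref{R1E} one has $\|\phi(t,t_0,x_0,u)\|_X\le\alpha_1^{-1}(w(t))<\infty$ on bounded time intervals, and the BIC property excludes a finite escape time.

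For the \emph{iISS} claim, \eqref{R1E} gives $\alpha_2^{-1}(w(t))\le\|\phi(t,t_0,x_0,u)\|_X\le\alpha_1^{-1}(w(t))$, so with $\rho(s):=\min\{\eta(r):\alpha_2^{-1}(s)\le r\le\alpha_1^{-1}(s)\}$ — which is positive definite, because for $s>0$ the minimum is over a compact subset of $(0,\infty)$ while as $s\to0^{+}$ the interval degenerates to $\{0\}$ — inequality \eqref{R2E} turns into $D^{+}w(t)\le-\rho(w(t))+\chi(\|u(t)\|_U)$ for all $t\ge t_0$. Corollary~\ref{cor1} with $\theta=\rho$ then produces $\beta_0\in\mathcal{KL}$ with $w(t)\le\beta_0(w(t_0),t-t_0)+2\int_{t_0}^{t}\chi(\|u(s)\|_U)\,ds$. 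Combining this with $w(t_0)\le\alpha_2(\|x_0\|_X)$, $\alpha_1(\|\phi(t,t_0,x_0,u)\|_X)\le w(t)$ and the subadditivity estimate $\alpha_1^{-1}(a+b)\le\alpha_1^{-1}(2a)+\alpha_1^{-1}(2b)$ gives \eqref{EQua1in} with $\mu:=\chi\in\mathcal{K}$, $\alpha(r):=\alpha_1^{-1}(4r)\in\mathcal{K}_\infty$ and $\beta(r,\tau):=\alpha_1^{-1}(2\beta_0(\alpha_2(r),\tau))\in\mathcal{KL}$.

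For the \emph{ISS} claim I would add \eqref{R1Ev}. Using $\|\phi(t,t_0,x_0,u)\|_X\ge\alpha_2^{-1}(w(t))$ and the \emph{nondecreasing} function $\psi(s):=\inf\{\eta(r):r\ge\alpha_2^{-1}(s)\}$ (continuous, $\psi(0)=0$, $\psi(s)>0$ for $s>0$, with genuine limit $\ell:=\lim_{s\to\infty}\psi(s)=\liminf_{r\to\infty}\eta(r)$, so that $\ell\ge\bar\chi:=\lim_{r\to\infty}\chi(r)$ by \eqref{R1Ev}), the estimate \eqref{R2E} becomes $D^{+}w(t)\le-\psi(w(t))+\chi(\|u(t)\|_U)$. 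Now choose a positive definite $\mu$ with $0<\mu(s)<\psi(s)$ for $s>0$ and $\liminf_{s\to\infty}(\psi(s)-\mu(s))=\ell$ — this is possible precisely because $\mu$ need not lie in $\mathcal{K}_\infty$; e.g. $\mu(s):=\psi(s)/(2(1+s))$ if $\ell<\infty$, and $\mu:=\psi/2$ if $\ell=\infty$. Since $\ell>\chi(v)$ for every admissible input of norm $v:=\|u\|_{\mathcal{U}}$, the continuous nondecreasing function $h(q):=\inf_{s>q}(\psi(s)-\mu(s))$ obeys $\lim_{q\to\infty}h(q)=\ell>\chi(v)$, so $g(v):=\inf\{q\ge0:h(q)\ge\chi(v)\}$ is finite, nondecreasing, $g(v)\to0$ as $v\to0$ with $g(0)=0$, and $\psi(s)-\chi(v)\ge\mu(s)$ for all $s>g(v)$. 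Then, repeating the argument of the proof of Theorem~\ref{hjMMMMMljg} for the sub-level sets $\Lambda_{u,t}:=\{x\in X:V(t,x)\le g(\|u\|_{\mathcal{U}})\}$ — which are forward invariant (since $D^{+}w<0$ just above their boundary), contain $B_{\alpha_2^{-1}(g(\|u\|_{\mathcal{U}}))}$, and inside which $\|\phi\|_X\le\alpha_1^{-1}(g(\|u\|_{\mathcal{U}}))$ — one applies Lemma~\ref{lemmm1} to $D^{+}w(t)\le-\mu(w(t))$, valid on the complement of $\Lambda_{u,t}$, to obtain a $u$-independent $\hat\beta\in\mathcal{KL}$ bounding the transient; this yields $w(t)\le\hat\beta(w(t_0),t-t_0)+g(\|u\|_{\mathcal{U}})$, and \eqref{R1E} turns this into the ISS estimate \eqref{rhm21} with $\rho_x=\rho_u=\infty$. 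Finally, an ISS Lyapunov function in dissipative form is in particular an iISS Lyapunov function for which \eqref{R1Ev} holds, so the very same argument proves the ISS part of the theorem.

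The step I expect to be the main obstacle is the construction in the ISS part: the classical argument (as for coercive ISS Lyapunov functions in implication form) would require a dissipation rate in $\mathcal{K}_\infty$ on the complement of $\Lambda_{u,t}$, which is not available when $\eta$ is only positive definite and, for instance, bounded. Condition \eqref{R1Ev} is exactly what lets one replace a genuine $\mathcal{K}_\infty$-rate by a positive definite rate $\mu$ that is allowed to decay at infinity while still leaving a gap $\psi-\mu$ whose limit inferior at infinity dominates $\lim_{r\to\infty}\chi(r)$. The work is then in (i) choosing $\mu$ and the associated gain $g$ so that $g\in\mathcal{K}$, $g(0)=0$ and $D^{+}w\le-\mu(w)$ holds \emph{uniformly in $u$} outside $\Lambda_{u,t}$, and (ii) checking, via the infimum-time argument of Theorem~\ref{hjMMMMMljg} together with the BIC property, that the family $\{\Lambda_{u,t}\}_{t\ge0}$ traps the trajectories. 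The remaining ingredients — forward completeness, the iISS estimate through Corollary~\ref{cor1}, and the final comparison-function bookkeeping — are routine.
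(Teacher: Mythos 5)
Your proof is correct. For the iISS half it follows essentially the same route as the paper: both arguments reduce \eqref{R2E} to a scalar Dini inequality $D^{+}w\le-\tilde\eta(w)+\chi(\|u(t)\|_U)$ for $w=V\circ\phi$ with $\tilde\eta$ positive definite, apply Corollary~\ref{cor1}, and invert via the weak triangle inequality for $\alpha_1^{-1}$, with BIC excluding finite escape. The only cosmetic difference is how the positive definite minorant is built: you take $\rho(s)=\min\{\eta(r):\alpha_2^{-1}(s)\le r\le\alpha_1^{-1}(s)\}$ directly, whereas the paper factorizes $\eta\ge\varrho\cdot\xi$ with $\varrho\in\mathcal{K}_\infty$, $\xi\in\mathcal{L}$ (via a cited proposition) and composes with $\alpha_2^{-1}$ and $\alpha_1^{-1}$ separately; both yield a valid $\theta\in\mathcal{P}$ for Corollary~\ref{cor1}. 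Where you genuinely diverge is the ISS half: the paper disposes of it in two lines by citing an external result (the dissipative-to-implication-form conversion from \cite[Theorem 3.2]{damak2021input}) and then invoking Theorem~\ref{hjMMMMMljg}, whereas you carry out that conversion explicitly — constructing $\psi$, the split $\mu<\psi$ with $\liminf_{s\to\infty}(\psi-\mu)=\ell\ge\lim\chi$, and the finite gain $g$ — and then rerun the sublevel-set trapping argument on $\Lambda_{u,t}=\{V(t,\cdot)\le g(\|u\|_{\mathcal{U}})\}$. Your version buys a self-contained proof and makes transparent exactly where condition \eqref{R1Ev} is used (to keep $g$ finite for all input amplitudes when $\eta$ is merely positive definite); the cost is the extra bookkeeping needed to check that $\psi$, $h$, and $g$ have the required continuity, positivity, and vanishing-at-zero properties, and that $g$ can be majorized by a $\mathcal{K}$-function — all of which you address correctly.
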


\begin{proof}
Let $V$ be a coercive iISS Lyapunov function with associated functions $\alpha_1,\alpha_2,\eta,\chi$.
As $\eta\in \mathcal{P},$ it follows from \cite[Proposition
B.1.14]{Mir23b} that there are $\varrho\in\mathcal{K}_{\infty}$ and
$\xi\in\mathcal{L}$ such that $\eta(r)\geq\varrho(r)\xi(r)$ for all
$r\geq0.$ Pick any $\tilde{\eta}\in\mathcal{P}$ satisfying
\begin{equation*}
    \varrho(\alpha_2^{-1}(r))\xi(\alpha_1^{-1}(r))\geq \tilde{\eta}(r), \quad r\geq 0.
\end{equation*}
From (\ref{R1E}) and (\ref{R2E}), we obtain for any $x\in X,$ any $t\geq0$ and any $u\in\mathcal{U}$ that
\begin{align*}
\dot{V}_{u}(t,x)&\leq -\eta(\|x\|_{X})+\chi(\|u(t)\|_{U})\\
&\leq-\varrho(\|x\|_{X})\xi(\|x\|_{X})+\chi(\|u(t)\|_{U}).\\
\intertext{Using $\alpha_2^{-1}(V(t,x)) \leq \|x\|_{X} \leq
  \alpha_1^{-1}(V(t,x))$ we can continue } 
&\leq-\varrho\circ\alpha_2^{-1}(V(t,x))\xi\circ\alpha_1^{-1}(V(t,x))+\chi(\|u(t)\|_{U})\\
&\leq-\tilde{\eta}(V(t,x))+\chi(\|u(t)\|_{U}).
\end{align*}
Now take any input $u\in\mathcal{U},$ any initial time $t\geq0,$ any initial condition $x_0\in X,$ and consider the corresponding solution $\phi(\cdot,t_0,x_0,u)$ defined on $[t_0,t_m(t_0,x_0,u)).$

As $V$ is continuous and $\phi$ is continuous in $t,$ the map $t\mapsto V(t,\phi(t,t_0,x_0,u))$ is continuous, and for all $t_0\geq0$ and all $t\in[t_0,t_m(t_0,x_0,u)),$ it holds that
$$D^{+}V(t,\phi(t,t_0,x_0,u))\leq-\tilde{\eta}(V(t,x))+\chi(\|u(t)\|_{U}).$$
By Corollary \ref{cor1}, there exists a $\hat{\beta}\in \mathcal{KL},$ so that
$$V(t,\phi(t,t_0,x_0,u))\leq \hat{\beta}(V(t_0,x_0),t-t_0)+2\int _{t_0} ^{t} \chi(\|u(s)\|_{U})ds, \quad t_0\geq0, \quad t\in[t_0,t_m(t_0,x_0,u)).$$
With \eqref{R1E}, it holds that
\begin{equation}\label{wxcvs}
\alpha_1(\|\phi(t,t_0,x_0,u)\|_{X})\leq \tilde{\beta}(\|x_0\|_{X},t-t_0)+2\int _{t_0} ^{t} \chi(\|u(s)\|_{U})ds, \quad t_0\geq0, \quad t\in[t_0,t_m(t_0,x_0,u)),
\end{equation}
where  $\tilde{\beta}\in \mathcal{KL}$ is defined by
$\tilde{\beta}(s,t)=\hat{\beta}(\alpha_2(s),t)$, $s,t\geq0$.
Since $\alpha_1 \in\mathcal{K}_{\infty}$, it satisfies the triangle
inequality $\alpha_1^{-1}(a+b)\leq\alpha_1^{-1}(2a)+\alpha_1^{-1}(2b)$ for
all $a,b\in\mathbb{R}_{+}$. We conlude the system is iISS as in Definition \ref{rhan23} with $\alpha \in \mathcal{K}_{\infty},$ $\mu\in \mathcal{K}$ and $\beta\in\mathcal{KL},$ with $\alpha(s)=\alpha_1^{-1}(2s),$ $\mu(s)=2 \chi(s)$ and $\beta(s,r)=\alpha_1^{-1}(2\tilde{\beta}(s,r)).$
Estimate (\ref{wxcvs}) together with the BIC property, implies that
$t_m(t_0,x_0,u)= \infty$ for all $(t_0,x_0,u) \in \R_+\times X \times {\cal U}$ and hence
system $\Sigma$ is iISS.

Suppose that $V$ is an ISS Lyapunov function in dissipative form, i.e., (\ref{R2E}) holds. Arguing as in the proof of \cite[Theorem 3.2]{damak2021input}, we obtain that
$V$ is an ISS Lyapunov function of $\Sigma$ in implication
form. Consequently, $\Sigma$ is ISS by Theorem~\ref{hjMMMMMljg}.
\end{proof}

\section{Time-varying semi-linear evolution equations}
\label{sec:time-varying-semi}

In the previous section, a general ISS formalism for abstract control
systems has been  introduced. We now proceed to the analysis of time-varying semi-linear systems in Banach spaces.

Let $X$ and $U$ be Banach spaces, and the input space be $\mathcal{U}=PC(\mathbb{R}_{+}, U).$ We consider time-varying semi-linear evolution equations of the form:
\begin{equation}
\label{R1}
\left\lbrace
\begin{array}{l}
\dot{x}(t)=A(t)x(t)+\Psi(t,x(t),u(t)),\qquad t\ge t_0\ge0,\\
x(t_0)=x_0,
\end{array}\right.
\end{equation}
where $x\in X$ is the state of the system, $u\in \mathcal{U}$ is a
control input, $t_0 \ge 0$ is an initial time, $x_0$ is the initial
state, $\Psi:\mathbb{R}_{+}\times X\times U \to X$ is a nonlinear map for
which we will specify the assumptions below. In addition,
$\{A(t):D(A(t)) \subset X \rightarrow X\}_{t \geq t_0\geq 0}$ is a family
of linear operators such that for all $t\geq 0,$ the domain $D(A(t))$ of
$A(t)$ is
 dense in $X.$ 

We assume that
$\{A(t)\}_{t\geq0}$ generates a strongly continuous evolution family
$\{W(t,s)\}_{t\geq s\geq0},$ that is, for all $t\geq s\geq t_0\ge0$ there
exists a bounded linear operator $W(t,s):X\to X$ and the following
properties hold:
\begin{enumerate}
\item[$(i)$] $W(s,s)=I,$ \quad $W(t,s)=W(t,r)W(r,s)$ for all $t\geq r\geq s\geq 0.$
\item[$(ii)$] $(t,s)\mapsto W(t,s)$ is strongly continuous for $t\geq
  s\geq 0,$ that is for each $x \in X$ the mapping ${\cal T} \ni(t,s)\mapsto W(t, s)x$ is continuous.
\item[$(iii)$] For all $t\geq s\geq 0$ and all $\upsilon\in D(A(s)),$ we
  have $W(t,s)v \in D(A(t))$ and 
$$\frac{\partial}{\partial t}\big(W(t,s)\upsilon\big)=A(t)W(t,s)\upsilon,$$
$$\frac{\partial}{\partial s}\big(W(t,s)\upsilon\big)=-W(t,s)A(s)\upsilon.$$
\end{enumerate}
\begin{rem}
Provided it exists, the evolution family $\{W(t,s)\}_{t\geq s\geq0}$ is uniquely determined by the family $\{A(t)\}_{t\geq0},$ see, e.g., \cite[p. 58]{carmen}.
\end{rem}
In the next definition, we extend the concept of a classical solution, as defined in \cite[Definition 2.1, p. 105]{Paz83}, to encompass piecewise classical solutions for \eqref{R1}.
\begin{dfn}\label{rrnnann23}
Let $x_0\in X,$ $u\in\mathcal{U}$ and $t_0\ge 0$ be given. A function
$x:[t_0,\infty)\rightarrow X$ is called a \emph{piecewise classical
  solution} of (\ref{R1}) on $[t_0,\infty)$ if $x(\cdot)$ is continuous on
$[t_0,\infty),$ piecewise continuously differentiable\footnote{To be
    precise, there is a countable, locally finite partition $[t_0,\infty)
    = \cup_{j=0}^\infty [a_j,a_{j+1})$, $a_0< a_1< a_2 < \ldots$, such
    that $x(\cdot)$ is continuously differentiable on each open interval $(a_j,a_{j+1})$.} on $(t_0,\infty)$ and $x(t)$ satisfies (\ref{R1}) 
at all points of differentiability.
\end{dfn}

\subsection{Semilinear evolution equations as control systems}
In this section, we show under appropriate Lipschitz continuity
assumptions that the semi-linear system (\ref{R1}) generates a well-posed
control system in the sense defined in Section~\ref{sec:time-varying-control}.
\begin{dfn}
We call $\Psi:\mathbb{R}_+\times X\times U\longrightarrow X$ \emph{locally
  Lipschitz continuous in $x,$ uniformly in $t$ and $u$ on bounded
  subsets} if for every $\tilde{t}\geq0$ and $c\geq 0,$ there is a
constant $K(c,\tilde{t}\,),$ such that for all $x,y\in
X:\|x\|_{X},\|y\|_{X}\leq c$, all $t\in [0,\tilde{t}]$, and all
$u\in U$: $\|u\|_{U}\leq c$, it holds that
\begin{equation}\label{bvbnbpp}
\|\Psi(t,x,u)-\Psi(t,y,u)\|_{X}\leq K(c,\tilde{t}\,)\|x-y\|_{X}.
\end{equation}
\end{dfn}

The following assumption will be needed in the remainder of the paper
\begin{description}
\item[$(\mathcal{H}_1)$] The family $\{A(t)\}_{t\geq0}$ generates a strongly continuous evolution family
$\{W(t,s)\}_{t\geq s\geq0}$. The nonlinearity $\Psi$ is jointly continuous in $(t,u)$ and locally Lipschitz continuous in $x$, uniformly in $t$ and $u$ on bounded sets.
\end{description}
Now, we introduce the concept of mild solutions for (\ref{R1}) and explore some associated properties.
\begin{dfn}
Assume $(\mathcal{H}_1)$. Let $0 \leq t_0 < t_1 \leq \infty$. A continuous function
$x:[t_0,t_1)\rightarrow X$ is called a \emph{(mild) solution} of
(\ref{R1}) on the interval $[t_0,t_1)$ for the initial condition
$x(t_0)=x_0\in X,$ and the input $u\in \mathcal{U}$, if $x$ satisfies the integral equation
\begin{equation}\label{bvbbLL4}
x(t) = W(t,t_0)x_0 + \int _{t_0} ^{t} W(t,s)\Psi(s,x(s),u(s))ds, \quad t
\in [ t_0,t_1 ).
\end{equation}
\end{dfn}
Let $u\in {\cal U}$ be fixed and consider solutions $x_1, x_2$ of (\ref{R1}) defined on  intervals $[t_0, \tau_1)$
and $[t_0, \tau_2),$ respectively, $\tau_1, \tau_2 > t_0.$ We call $x_2$ an extension of $x_1$ if $\tau_2 > \tau_1,$ and
$x_2(t) = x_1(t)$ for all $t \in [t_0, \tau_1).$
A mild solution $x$ of (\ref{R1}) is called \emph{maximal}, if there is no
solution of (\ref{R1}) that extends $x$. A solution is called
\emph{global} if it is defined on $[t_0,\infty).$

A central property of the system (\ref{R1}) is
\begin{dfn}\label{phi}
Assume $(\mathcal{H}_1)$. The system (\ref{R1}) is called
\emph{well-posed} if for every initial time $t_0\geq0$, every initial
condition $x_0\in X$ and every control input $u \in \mathcal{U},$ there
exists a unique maximal mild solution that we denote by 
\begin{equation*}
 \phi(\cdot,t_0,x_0, u):[t_0, t_m(t_0,x_0,u))\to X.
\end{equation*}
In this case $t_m=t_m(t_0,x_0,u)$ is called the \emph{maximal existence time} of the solution corresponding to $(t_0,x_0,u).$
\end{dfn}
The map $\phi$ defined in Definition \ref{phi}, describing the evolution
of the system (\ref{R1}), is called \emph{the flow map}, or just
\emph{flow}. The domain of definition of the flow $\phi$ is denoted 
\begin{equation*}
    D_{\phi}\subseteq\mathcal{T}\times X\times\mathcal{U}.
\end{equation*}
The following result gives sufficient conditions for the well-posedness of
the system (\ref{R1}). It is a minor extension of a result by
Pazy, \cite[Theorem 6.1.4, p.185]{Paz83}, to the situation when the inputs are piecewise continuous.
\begin{prop}
\label{propooo1}
If Assumption $(\mathcal{H}_1)$ holds, then system (\ref{R1}) is a well-posed control system with the BIC property.
\end{prop}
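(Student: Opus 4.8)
The goal is to show that system \eqref{R1} defines a control system in the sense of Definition~\ref{csyol} that additionally has the BIC property. The plan is to proceed in four stages: (1) local existence and uniqueness of mild solutions; (2) existence of a unique \emph{maximal} mild solution, which gives the flow map $\phi$ and the domain $D_\phi$; (3) verification of the axioms $(\Sigma_1)$--$(\Sigma_4)$; (4) verification of the BIC property. Stage (1) is the analytic core and follows Pazy, \cite[Theorem 6.1.4]{Paz83}, with the modification that inputs are only piecewise continuous.

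\textbf{Stage 1: local well-posedness.} Fix $(t_0,x_0,u)\in\mathbb{R}_+\times X\times\mathcal{U}$. Since $u\in PC(\mathbb{R}_+,U)$, on a sufficiently small interval $[t_0,t_0+\delta_0]$ not containing a discontinuity of $u$ in its interior, $u$ is continuous (right-continuity of $u$ handles the endpoint $t_0$), and since $\|u\|_\mathcal{U}<\infty$ we may fix $c:=\|x_0\|_X+1+\|u\|_\mathcal{U}$ and $\tilde t:=t_0+1$, obtaining a Lipschitz constant $K=K(c,\tilde t\,)$ from $(\mathcal{H}_1)$. Let $M:=\sup_{t_0\le s\le t\le\tilde t}\|W(t,s)\|$, which is finite by the strong continuity of $W$ and the uniform boundedness principle. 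Define the operator $(\mathcal{F}x)(t):=W(t,t_0)x_0+\int_{t_0}^t W(t,s)\Psi(s,x(s),u(s))\,ds$ on the closed ball of radius $1$ around the constant function $t\mapsto W(t,t_0)x_0$ in $C([t_0,t_0+\delta],X)$. For $\delta\le\delta_0$ small enough (depending on $M$, $K$, and $\sup_{[t_0,\tilde t]}\|\Psi(s,W(s,t_0)x_0,u(s))\|_X$, the latter finite by joint continuity of $\Psi$ in $(t,u)$ and continuity of $s\mapsto W(s,t_0)x_0$), $\mathcal{F}$ maps this ball into itself and is a contraction, because
\[
\|(\mathcal{F}x)(t)-(\mathcal{F}y)(t)\|_X\le M K (t-t_0)\sup_{[t_0,t_0+\delta]}\|x-y\|_X .
\]
Banach's fixed point theorem gives a unique mild solution on $[t_0,t_0+\delta]$. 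Uniqueness on \emph{any} common interval of existence then follows from a standard Gronwall argument applied to $\|x_1(t)-x_2(t)\|_X\le MK\int_{t_0}^t\|x_1(s)-x_2(s)\|_X\,ds$ (splitting at the finitely many discontinuities of $u$ in the interval, where the local Lipschitz bound is applied with a common radius $c$ bounding both solutions).

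\textbf{Stages 2--4.} For Stage 2, one takes the supremum $t_m$ of all $\tau>t_0$ for which a mild solution exists on $[t_0,\tau)$; by the uniqueness just proved, these solutions agree on overlaps and glue to a maximal solution $\phi(\cdot,t_0,x_0,u)$ on $[t_0,t_m)$. Setting $D_\phi:=\{(t,t_0,x_0,u):t_0\le t<t_m(t_0,x_0,u)\}$ gives exactly the structure required in Definition~\ref{csyol}(iv). For Stage 3: $(\Sigma_1)$ is immediate from \eqref{bvbbLL4} at $t=t_0$ since $W(t_0,t_0)=I$; $(\Sigma_2)$ (causality) holds because the right-hand side of \eqref{bvbbLL4} on $[t_0,t)$ depends on $u$ only through its values on $[t_0,t)$, and uniqueness transfers this to the maximal solution; $(\Sigma_3)$ (continuity in $t$) is built into the definition of a mild solution; $(\Sigma_4)$ (cocycle property) follows by fixing $\tau\in[t_0,t]$, using the evolution-family identity $W(t,s)=W(t,\tau)W(\tau,s)$ to split the integral in \eqref{bvbbLL4} at $\tau$, recognizing $\phi(t,\tau,\phi(\tau,t_0,x_0,u),u)$ as a mild solution with initial data $\phi(\tau,t_0,x_0,u)$ at time $\tau$, and invoking uniqueness. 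For Stage 4 (BIC): suppose $t_m<\infty$ but $\sup_{t\in[t_0,t_m)}\|\phi(t,t_0,x_0,u)\|_X\le M<\infty$. Then with $c:=M+1+\|u\|_\mathcal{U}$ and $\tilde t:=t_m+1$ the Lipschitz constant $K(c,\tilde t\,)$ and the bound $\sup_{t_0\le s\le t\le\tilde t}\|W(t,s)\|$ are uniform on all of $[t_0,t_m)$, so the local existence argument above yields a \emph{uniform} existence step $\delta>0$; starting the fixed-point argument from any time $t_1\in(t_m-\delta,t_m)$ (with $\phi(t_1,\dots)$ as initial datum) extends the solution past $t_m$, contradicting maximality. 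Hence either $t_m=\infty$ or $\|\phi(\cdot)\|_X$ is unbounded on $[t_0,t_m)$, which is the BIC property.

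\textbf{Main obstacle.} The only genuinely delicate point is the bookkeeping around the discontinuities of the input $u$: Pazy's theorem assumes continuous inhomogeneities, whereas here $u\in PC$ means $s\mapsto\Psi(s,x(s),u(s))$ is only piecewise continuous (though still bounded and strongly measurable, hence Bochner-integrable, so \eqref{bvbbLL4} makes sense). One must check that the contraction-mapping and Gronwall arguments go through with merely piecewise-continuous integrands — this is true because the fixed-point map $\mathcal{F}$ still sends $C([t_0,t_0+\delta],X)$ into itself (the integral of a piecewise-continuous function is continuous) and the Lipschitz estimates only ever use the bound $\|\Psi(s,x,u(s))-\Psi(s,y,u(s))\|_X\le K\|x-y\|_X$ pointwise in $s$ — but it must be stated carefully, and the local finiteness of the partition in Definition~\ref{rrnnann23} is what guarantees that only finitely many discontinuities occur on any bounded interval, so all the suprema above are attained and finite.
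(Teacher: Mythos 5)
Your proposal is correct, but the engine driving Stage 1 is genuinely different from the paper's. The paper does not redo the fixed-point argument at all: it extends $u$ restricted to each interval $[t_k,t_{k+1})$ between consecutive discontinuities to a \emph{continuous} function $\tilde u_k$ on $[t_k,t_{k+1}+1]$, invokes Pazy's Theorem 6.1.4 verbatim on each such interval, and then glues the resulting solutions at the discontinuity points, checking via the splitting $W(t,s)=W(t,t_k)W(t_k,s)$ that the concatenation satisfies the integral equation \eqref{bvbbLL4}; BIC is inherited from Pazy's continuation statement. You instead rerun the contraction-mapping and Gronwall arguments from scratch with piecewise-continuous integrands, which is legitimate precisely for the reasons you give in your final paragraph (the integral of a piecewise-continuous forcing term is still continuous, the Lipschitz bound is used only pointwise in $s$, and the locally finite partition keeps all suprema finite — note that finiteness of $\sup_s\|\Psi(s,\cdot,u(s))\|_X$ uses that the range of $u$ on a compact interval is relatively compact, being a finite union of images of continuous pieces with one-sided limits). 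The paper's route is shorter because it offloads the analysis to a cited theorem at the cost of the extension-and-gluing bookkeeping; yours is more self-contained and makes the uniform-step argument for BIC explicit rather than citing it. You also verify the axioms $(\Sigma_1)$--$(\Sigma_4)$ within the proposition, which the paper defers to the theorem immediately following Proposition~\ref{propooo1} (your cocycle verification is essentially the same integral-splitting computation the paper uses there). One cosmetic slip: the reference trajectory $t\mapsto W(t,t_0)x_0$ around which you center the ball is not a constant function, though this does not affect the argument.
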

\begin{proof}
Let $t_0\geq 0,$ $x_0\in X,$ and $u\in PC(\mathbb{R}_{+}, U).$ Since $u$ is piecewise right continuous, there is an increasing sequence of discontinuities $(t_k)_{k\in \mathbb{N}}$ of $u$ without accumulation points.
We will construct the solution in an iterative fashion. Extend $u_{\vert [t_0,t_1)}$ in an arbitrary manner to a continuous function $\tilde{u}_0$ defined on
$[t_0,t_1+1]$ with $u_{\vert [t_0,t_1)} = \tilde{u}_{0,\vert [t_0,t_1)}$.  By an application
of the existence result \cite[Theorem 6.1.4, p.185]{Paz83}, there is $\tilde{t}\in (t_0,t_1+1),$ such that there is a unique
(maximal) mild solution of (\ref{R1}) on $[t_0,\tilde{t}),$ corresponding
to the initial condition $x_0$ and the input $\tilde{u}_0$.
If $\tilde{t}\leq t_1$, then $\tilde{t}$ is the maximal existence time
corresponding to $(t_0,x_0,u)$, as $u$ and $\tilde{u}_0$ coincide on
$[t_0,\tilde{t})$. Thus
$t_m=t_m(t_0,x_0,u)=\tilde{t}$. Otherwise, $x_1 :=
\phi(t_1,t_0,x_0,\tilde{u}_0) = \phi(t_1,t_0,x_0,u)$ exists. We may then
repeat the argument on the interval $[t_1, t_2 +1 ]$, again using a
continuous extension $\tilde{u}_1$ of $u_{\vert [t_1,t_2]} $ to that interval. 

Let $x: [t_0, \min \{ t_2 , t_m(t_1,x_1,\tilde{u}_1))\to X$ be defined by 
\begin{eqnarray*}
x(t):=
\begin{cases}
\phi(t,t_0,x_0,u), \quad & t \in [ t_0,t_1 ],\\
\phi(t,t_1,x_1,\tilde{u}_1) = \phi(t,t_1,x_1,u),\quad & t \in (t_1,\min \{ t_2 , t_m(t_1,x_1,\tilde{u}_1)).
\end{cases}
\end{eqnarray*}
In the equality in the definition of $x$, we have used that $u$ and $\tilde{u}_1$ coincide on $[t_1,t_2]$.

Then $x$ is continuous, clearly a solution of the integral equation on
$[t_0,t_1 ]$ and we have for $t_1 \leq t < \min \{ t_2 , t_m
\}$ that
\begin{align*}
    x(t) &= W(t,t_1) x_1 + \int_{t_1}^t
    W(t,s)\Psi(s,\phi(s,t_1,x_1,u),u(s)) ds \\
		&=  W(t,t_1) \left(
      W(t_1,t_0)x_0 + \int_{t_0}^{t_1}W(t_1,s)\Psi(s,\phi(s,t_0,x_0,u),u(s))ds \right)  + \int_{t_1}^t
    W(t,s)\Psi(s,x(s),u(s)) ds \\
		&=  W(t,t_0) x_0 + \int_{t_0}^{t}W(t,s)\Psi(s,x(s),u(s))ds.
\end{align*}
This shows that indeed $x$ is a mild solution on $[t_0, \min \{ t_2 , t_m(t_1,x_1,\tilde{u}_1))$.
By applying this reasoning iteratively, we obtain a unique maximal mild solution
$\phi(\cdot,t_0,x_0,u)$ on an interval $[t_0,t_m)$. If $u$ only has finitely many
discontinuities the same arguments can be applied on the final infinite interval of
continuity. By construction and \cite[Theorem 6.1.4, p.185]{Paz83}, each solution can be continued as long it
remains bounded, so that the BIC property is guaranteed.
\end{proof}
Now we show that well-posed systems of the form (\ref{R1}) are a special case of general control systems introduced in Definition \ref{csyol}.
\begin{thm}
Let (\ref{R1}) be well-posed. Then the triple $(X,\mathcal{U},\phi)$, where $\phi$ is the flow map of (\ref{R1}), constitutes a control system in the sense of Definition \ref{csyol}.
\end{thm}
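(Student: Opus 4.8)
The plan is to verify, one by one, the axioms of Definition~\ref{csyol} for the triple $(X,\mathcal{U},\phi)$, with $X$ the underlying Banach space, $\mathcal{U}=PC(\mathbb{R}_+,U)$, and $\phi$ the flow of the well-posed system \eqref{R1}. Items (i)--(ii) of Definition~\ref{csyol} are clear, a Banach space being in particular a normed linear space. For item (iii) one only has to observe that $PC(\mathbb{R}_+,U)$ with the sup-norm satisfies the axiom of shift invariance: for $u\in\mathcal{U}$ and $\tau\geq0$ the shift $u(\cdot+\tau)$ is again globally bounded, right continuous and piecewise continuous, and $\|u(\cdot+\tau)\|_{\mathcal{U}}=\sup_{s\geq\tau}\|u(s)\|_U\leq\|u\|_{\mathcal{U}}$. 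For item (iv) we let $D_\phi$ be the natural domain of the flow map, i.e.\ the set of triples $((t,t_0),x_0,u)$ with $(t_0,x_0,u)\in\mathbb{R}_+\times X\times\mathcal{U}$ and $t_0\leq t<t_m(t_0,x_0,u)$; well-posedness (Definition~\ref{phi}) makes $\phi$ single-valued on $D_\phi$, and for each fixed $(t_0,x_0,u)$ the corresponding slice of $D_\phi$ is exactly $[t_0,t_m(t_0,x_0,u))\times\{(t_0,x_0,u)\}$, which is the product structure required.

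It remains to check $(\Sigma_1)$--$(\Sigma_4)$. Axiom $(\Sigma_1)$ follows by evaluating the mild-solution identity \eqref{bvbbLL4} at $t=t_0$: the integral vanishes and $W(t_0,t_0)=I$, so $\phi(t_0,t_0,x_0,u)=x_0$. Axiom $(\Sigma_3)$ is immediate, a mild solution being by definition continuous on its interval of existence. For $(\Sigma_2)$ (causality) we use uniqueness: suppose $\tilde u\in\mathcal{U}$ coincides with $u$ on $[t_0,t)$ and $t<t_m(t_0,x_0,u)$. Then $y:=\phi(\cdot,t_0,x_0,u)$ restricted to $[t_0,t]$ still satisfies \eqref{bvbbLL4} with input $\tilde u$ on $[t_0,t]$, because the integrands for $u$ and $\tilde u$ agree for $s\in[t_0,t)$ and a modification of the integrand at the single point $s=t$ does not change the (Bochner) integral; hence $y|_{[t_0,t]}$ is a mild solution of the $\tilde u$-problem, so by maximality and uniqueness of $\phi(\cdot,t_0,x_0,\tilde u)$ we get $t<t_m(t_0,x_0,\tilde u)$ and $\phi(t,t_0,x_0,\tilde u)=\phi(t,t_0,x_0,u)$.

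For the cocycle property $(\Sigma_4)$, fix $(t_0,x_0,u)$, write $y(\cdot):=\phi(\cdot,t_0,x_0,u)$ on $[t_0,t_m(t_0,x_0,u))$, and fix $\tau\in[t_0,t_m(t_0,x_0,u))$. Splitting the integral in \eqref{bvbbLL4} at $\tau$ and using the composition property $W(t,s)=W(t,\tau)W(\tau,s)$ of the evolution family together with the expression for $y(\tau)$, one obtains $y(t)=W(t,\tau)y(\tau)+\int_\tau^t W(t,s)\Psi(s,y(s),u(s))\,ds$ for all $t\in[\tau,t_m(t_0,x_0,u))$. Thus $y|_{[\tau,t_m(t_0,x_0,u))}$ is a mild solution of \eqref{R1} with initial data $(\tau,y(\tau),u)$, so $t_m(\tau,y(\tau),u)\geq t_m(t_0,x_0,u)$ and, by uniqueness of the maximal solution, $\phi(t,\tau,\phi(\tau,t_0,x_0,u),u)=y(t)=\phi(t,t_0,x_0,u)$ for all $t\in[\tau,t_m(t_0,x_0,u))$, which is $(\Sigma_4)$.

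I do not expect a genuine obstacle: every step reduces either to the uniqueness of maximal mild solutions guaranteed by well-posedness or to the algebraic composition law of the evolution family. The only points requiring a little care are the bookkeeping with half-open intervals of existence (so that maximal solutions from intermediate initial times dominate, as needed in $(\Sigma_2)$ and $(\Sigma_4)$) and the observation that changing an input at a single time does not affect the integral equation, which is precisely what makes causality compatible with the right-continuity convention on $\mathcal{U}$.
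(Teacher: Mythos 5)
Your proof is correct, and it verifies the axioms the paper simply declares to hold \enquote{by construction} ($(\Sigma_1)$--$(\Sigma_3)$ and shift invariance), which is a welcome addition. The one place where you genuinely diverge from the paper is the cocycle property $(\Sigma_4)$, and your route is the cleaner of the two. The paper argues \emph{forward}: it takes the maximal solution starting from $(\tau,\phi(\tau,t_0,x_0,u))$ on its own maximal interval $[\tau,\tau')$, concatenates it with $\phi(\cdot,t_0,x_0,u)|_{[t_0,\tau]}$, checks that the concatenation satisfies \eqref{bvbbLL4} from $t_0$, identifies it with $\phi(\cdot,t_0,x_0,u)$ by uniqueness, and then must invoke the BIC property to conclude $\tau'>t$. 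You argue by \emph{restriction}: splitting the integral in \eqref{bvbbLL4} at $\tau$ and using $W(t,s)=W(t,\tau)W(\tau,s)$ shows directly that $\phi(\cdot,t_0,x_0,u)|_{[\tau,t_m(t_0,x_0,u))}$ is a mild solution for the data $(\tau,\phi(\tau,t_0,x_0,u),u)$, whence $t_m(\tau,\phi(\tau,t_0,x_0,u),u)\geq t_m(t_0,x_0,u)$ and the identity follows from uniqueness of the maximal solution --- no appeal to BIC is needed. Both arguments implicitly use the same convention that uniqueness of the maximal mild solution means every mild solution with the same data is a restriction of it. The only point in your write-up that deserves one extra line is causality: having a mild solution of the $\tilde u$-problem on the closed interval $[t_0,t]$ yields $t<t_m(t_0,x_0,\tilde u)$ only after invoking local continuation from $(t,\phi(t,t_0,x_0,u))$ (Proposition~\ref{propooo1} / Pazy's local existence), since maximal intervals are half-open; you flag this bookkeeping yourself, so it is a presentational rather than a substantive gap.
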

\begin{proof}
By construction, conditions (i)--(iv) as well as
($\Sigma_1$)--($\Sigma_3$) in Definition~\ref{csyol} are
satisfied. We only need to check the cocycle property ($\Sigma_4$). Let $\phi$ be the
flow map and consider an initial time $t_0\geq 0$, an initial condition
$x_0\in X$, and an input $u \in \mathcal{U}$.
Let $t \in (t_0,t_m(t_0,x_0,u))$, $\tau\in [t_0,t]$ and consider the solution corresponding to the
initial condition $x(\tau) = \phi(\tau,t_0,x_0,u)$ and the input $u$. By
well-posedness of the system, there exists a unique solution corresponding
to these data on a maximal time interval $[\tau, \tau')$. Define a function $x
: [t_0, \min \{\tau',t \})\to X$, by $x(s) = \phi(s,t_0,x_0,u)$, $s\in [t_0,\tau ]$,
and $x(s) =  \phi(s,\tau,\phi(\tau,t_0,x_0,u),u)$, $s\in [\tau, \min \{\tau',t \})$. Then $x$ is continuous, clearly solves the integral equation
(\ref{bvbbLL4}) on the interval $[t_0,\tau ]$ and for $s \in [\tau, \min
\{\tau',t \})$ we have
\begin{multline*}
    x(s) = \phi(s,\tau,\phi(\tau,t_0,x_0,u),u) =
    W(s,\tau)\phi(\tau,t_0,x_0,u)+\int_{\tau}^{s}W(s,r)\Psi(r,\phi(r,\tau,\phi(\tau,t_0,x_0,u),u),u(r))dr
    \\ 
    = W(s,\tau)\bigg{[}W(\tau,t_0)x_0+\int_{t_0}^{\tau}W(\tau,r)\Psi(r,\phi(r,t_0,x_0,u),u(r))dr \bigg{]}
+\int _{\tau}^{s} W(s,r)\Psi(r,x(r),u(r))dr \\
= W(s,t_0)x_0+\int _{t_0}^{s} W(s,r)\Psi(r,x(r),u(r))dr.
\end{multline*}

This shows that $x$ is a solution for the initial values $(t_0, x_0)$ and
the input $u$ on the interval  $[t, \min \{\tau',t \})$. As this solution
is unique, $x$ coincides with $\phi(\cdot, t_0,x_0,u)$ on $[t, \min
\{\tau',t \})$. The BIC property then implies that $\tau' > t$
and the cocycle property holds, because  $
\phi(t,\tau,\phi(\tau,t_0,x_0,u),u)= x(t) = \phi(t,t_0,x_0,u)$.
\end{proof}

\subsection{Stability of evolution families}
In this subsection, we recall different types of stability for strongly continuous evolution families $\{W(t,t_0)\}_{t\geq t_0\geq0}$. These stability concepts include uniform stability, uniform attractiveness, uniform asymptotic stability, and uniform exponential stability. We also provide a proposition and an example that help to clarify the relationships between these different forms of stability.

\begin{dfn}
\label{def:StabAttr}
\cite[p. 112]{DaK74} \cite[Def 36.9, p. 174]{Hah67} \cite[Def 3.4, p. 60]{carmen}
A strongly continuous evolution family $\{W(t,t_0)\}_{t\geq t_0\geq0}$ is said to be:
\begin{enumerate}
\item[$(i)$] \emph{Uniformly stable} if there is $0<N<\infty$ such that
  for all $t\geq t_0 \geq0$ we have $\|W(t,t_0)\|\leq N.$
\item[$(ii)$] \emph{Uniformly attractive} if for all $\epsilon>0$, there exist $T=T(\epsilon)$ such that, for all $t_0\geq0$, and all $t\geq t_0+T$ we have $\|W(t,t_0)\| \leq \epsilon$.
\item[$(iii)$] \emph{Uniformly asymptotically stable} if $\{W(t,t_0)\}_{t\geq t_0\geq0}$ is uniformly stable and uniformly attractive.
\item[$(iv)$] \emph{Uniformly exponentially stable} if there exist $k,\omega>0,$ such that $\|W(t,t_0)\|\leq ke^{-\omega(t-t_0)} $ holds for each $t_0\geq0$ and all $t\geq t_0$.
\end{enumerate}
\end{dfn}

The following lemma is essential for establishing a fundamental connection between the stability of evolution families and the behavior of time-varying linear systems.
\begin{lem}
\label{lemmmcc1}\cite[Lemma $4.1$]{damak2021input}
Let $\{W(t,s)\}_{t\geq s\geq0}$ be a strongly continuous evolution family
with generating family $\{ A(t) \}_{t\geq 0}$. 
The following statements are equivalent:
\begin{enumerate}
\item[$(i)$] The family $\{W(t,s)\}_{t\geq s\geq0}$ is uniformly asymptotically stable.
\item[$(ii)$] The time-varying linear system \begin{equation}\label{linear}
\dot{x}=A(t)x,\quad x(t_0)=x_0,
\end{equation}
 is uniformly exponentially stable, that is, there exist $k,\omega > 0,$ such that
$$\|x(t)\| \leq k\|x_0\|e^{-\omega(t-t_0)}$$ holds for all $t_0\geq0,$ all $x_0 \in X,$ and all $t \geq t_0.$

\item[$(iii)$]The family $\{W(t,s)\}_{t\geq s\geq0}$ is uniformly exponentially stable.
\end{enumerate}
\end{lem}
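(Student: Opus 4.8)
The plan is to establish the cycle $(iii)\Rightarrow(i)\Rightarrow(iii)$, after first observing that $(ii)$ and $(iii)$ are merely two reformulations of each other. For the latter point: for zero input the unique mild solution of \eqref{linear} is $t\mapsto W(t,t_0)x_0$, so requiring the estimate in $(ii)$ for every $x_0\in X$ is, upon taking the supremum over $\|x_0\|_X\le 1$, the same as $\|W(t,t_0)\|\le k e^{-\omega(t-t_0)}$ for all $(t,t_0)\in\mathcal{T}$, which is exactly $(iii)$. Hence only the equivalence of uniform asymptotic stability and uniform exponential stability of $\{W(t,s)\}$ has genuine content.

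For $(iii)\Rightarrow(i)$ I would argue directly from the exponential bound: $\|W(t,t_0)\|\le k e^{-\omega(t-t_0)}\le k$ for all $t\ge t_0$ (note $k\ge 1$ since $W(t_0,t_0)=I$), which is uniform stability with $N=k$; and for a given $\epsilon>0$, choosing $T>0$ large enough that $k e^{-\omega T}\le\epsilon$ yields $\|W(t,t_0)\|\le\epsilon$ for all $t\ge t_0+T$ and all $t_0\ge 0$, i.e.\ uniform attractiveness.

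The heart of the proof is $(i)\Rightarrow(iii)$, a standard \q{decay over one period, then iterate} argument exploiting the evolution-family identity $W(t,s)=W(t,r)W(r,s)$. From uniform stability fix $N\in(0,\infty)$ with $\|W(t,s)\|\le N$ for all $(t,s)\in\mathcal{T}$. From uniform attractiveness, applied with $\epsilon=\tfrac12$, obtain $T>0$ such that $\|W(t,s)\|\le\tfrac12$ whenever $t\ge s+T$; crucially this $T$ is uniform in the initial time $s$, which is precisely what Definition~\ref{def:StabAttr}$(ii)$ provides. Given $(t,t_0)\in\mathcal{T}$, write $t-t_0=nT+r$ with $n$ a nonnegative integer and $r\in[0,T)$, and iterate the evolution identity to factor
\[
W(t,t_0)=W(t,\,t_0+nT)\prod_{j=1}^{n}W\bigl(t_0+jT,\;t_0+(j-1)T\bigr).
\]
Each of the $n$ product factors has norm $\le\tfrac12$ and the leftover factor $W(t,t_0+nT)$ has norm $\le N$, so $\|W(t,t_0)\|\le N\,2^{-n}$. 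Since $n>(t-t_0)/T-1$, this gives $\|W(t,t_0)\|\le 2N\exp\!\bigl(-\tfrac{\ln 2}{T}(t-t_0)\bigr)$, which is $(iii)$ with $k=2N$ and $\omega=(\ln 2)/T$.

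I do not anticipate a real obstacle; the only points that need care are the uniformity of the attractiveness time $T$ in the initial instant $t_0$ (built into the definition) and the bookkeeping of the non-integer remainder $r$ in the iteration, which is simply absorbed into the uniform bound $N$ on the leading factor $W(t,t_0+nT)$.
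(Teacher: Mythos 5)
Your proof is correct. The paper itself does not prove this lemma --- it is imported verbatim from \cite[Lemma 4.1]{damak2021input} --- so there is no in-paper argument to compare against; but your reasoning is the standard and complete one. The reduction of $(ii)\Leftrightarrow(iii)$ to taking the supremum over the unit ball is right, since the (mild) solution of \eqref{linear} is exactly $t\mapsto W(t,t_0)x_0$; the implication $(iii)\Rightarrow(i)$ is immediate as you say; and the core step $(i)\Rightarrow(iii)$ via the cocycle factorization over windows of length $T$ (with the uniform-in-$t_0$ attractiveness time $T$ from Definition~\ref{def:StabAttr} and the remainder absorbed into the uniform bound $N$) is exactly the classical \q{halving and iterating} argument. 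Your edge cases check out as well: the empty product when $t-t_0<T$ still yields $\|W(t,t_0)\|\le N\le 2Ne^{-\omega(t-t_0)}$, and $n>(t-t_0)/T-1$ gives the stated exponential rate $\omega=(\ln 2)/T$. Note that this mechanism is also the one the paper reuses elsewhere, e.g.\ in the proof of Lemma~\ref{lem:UBRS-criterion} and in Example~\ref{ex:Uniform-attractivity-and-uniform-stability}, so your argument is fully consistent with the paper's toolkit.
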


In the rest of this section and in 
Example~\ref{ex:Uniform-attractivity-and-uniform-stability} in the
Appendix, we clarify the
relationship between uniform attractivity and uniform asymptotic stability. This is presumably well-known  in the theory of linear
time-varying systems. Nevertheless, we could not find the statements of
these results in the literature, thus we state them for completeness. 

In addition to (global in time) stability properties, let us introduce
the following concept characterizing bounds of $W$ on finite time
intervals. The definition is motivated by 
\cite[Definition 1.4, p. 30]{KaJ11}, \cite{Mir23e}.
\begin{dfn}
\label{def:UBRS}
A strongly continuous evolution family $\{W(t,t_0)\}_{t\geq t_0\geq0}$ has
\emph{uniformly (in time) bounded reachability sets (UBRS)} if for any $T>0$ it holds that 
\begin{eqnarray}
\sup_{t_0\geq 0,\ t\in[t_0,t_0+T]}\|W(t,t_0)\|<\infty.
\label{eq:UBRS}
\end{eqnarray}
\end{dfn}
We can characterize UBRS as follows:
\begin{lem}
\label{lem:UBRS-criterion}
A strongly continuous evolution family $\{W(t,t_0)\}_{t\geq t_0\geq0}$ is
UBRS if and only if
\begin{eqnarray}
\sup_{t_0\geq 0,\ t\in[t_0,t_0+1]}\|W(t,t_0)\|=K<\infty.
\label{eq:Finite-Bohl-exponent}
\end{eqnarray}
\end{lem}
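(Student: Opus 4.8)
The plan is to prove the two implications separately. The direction \eqref{eq:UBRS} $\Rightarrow$ \eqref{eq:Finite-Bohl-exponent} is immediate: simply take $T=1$ in the definition of UBRS, and the finiteness in \eqref{eq:Finite-Bohl-exponent} follows at once. So the whole content lies in the converse: assuming a uniform bound $K$ on one-step reachability sets, we must produce a uniform bound on $T$-step reachability sets for every fixed $T>0$.

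For the converse, the key idea is to use the cocycle/semigroup-type identity $W(t,s)=W(t,r)W(r,s)$ for $t\ge r\ge s\ge 0$ from property~(i) of an evolution family, together with submultiplicativity of the operator norm, to chain together short-time bounds. Fix $T>0$ and choose $n\in\mathbb{N}$ with $n\ge T$ (e.g. $n=\lceil T\rceil$). Given any $t_0\ge 0$ and any $t\in[t_0,t_0+T]$, partition the interval $[t_0,t]$ into at most $n$ subintervals each of length at most $1$: set $s_j:=t_0+j\cdot\frac{t-t_0}{n}$ for $j=0,\dots,n$, so that $s_0=t_0$, $s_n=t$, and $s_{j+1}-s_j=\frac{t-t_0}{n}\le\frac{T}{n}\le 1$. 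Then by property~(i),
\begin{equation*}
W(t,t_0)=W(s_n,s_{n-1})W(s_{n-1},s_{n-2})\cdots W(s_1,s_0),
\end{equation*}
and hence, using \eqref{eq:Finite-Bohl-exponent} to bound each factor (note $s_{j+1}\in[s_j,s_j+1]$),
\begin{equation*}
\|W(t,t_0)\|\le\prod_{j=0}^{n-1}\|W(s_{j+1},s_j)\|\le K^{n}.
\end{equation*}
Since the right-hand side $K^{n}=K^{\lceil T\rceil}$ does not depend on $t_0$ or on $t\in[t_0,t_0+T]$, taking the supremum gives $\sup_{t_0\ge0,\ t\in[t_0,t_0+T]}\|W(t,t_0)\|\le K^{\lceil T\rceil}<\infty$, which is exactly \eqref{eq:UBRS}. (If $t=t_0$ the bound is trivial since $W(t_0,t_0)=I$ has norm $\le 1\le K^{\lceil T\rceil}$, assuming $K\ge 1$; in any case $\|I\|\le\max\{1,K^{\lceil T\rceil}\}$.)

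I do not expect any genuine obstacle here — the argument is a routine subdivision-and-submultiplicativity estimate, entirely analogous to the classical fact that a strongly continuous semigroup is bounded on $[0,1]$ iff it is bounded on every compact interval. The only minor points to be careful about are: the degenerate case $t=t_0$, where one should note $\|W(t_0,t_0)\|=\|I\|$; and ensuring one knows $K\ge\|I\|$ so that $K^{n}$ really dominates, which follows because $t_0\in[t_0,t_0+1]$ forces $\|I\|=\|W(t_0,t_0)\|\le K$. No measurability or strong-continuity subtleties enter, since we only manipulate operator norms at finitely many time instants.
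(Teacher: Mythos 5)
Your proposal is correct and follows essentially the same route as the paper: the easy direction is immediate by taking $T=1$, and the converse is the standard chaining argument via the cocycle identity $W(t,s)=W(t,r)W(r,s)$ and submultiplicativity, yielding a bound $K^{\lceil T\rceil}$ (the paper partitions $[t_0,t]$ into unit-length steps plus a final partial step rather than into $n$ equal pieces, but this is an immaterial difference). Your observation that $K\geq\|I\|$ follows from taking $t=t_0$ in \eqref{eq:Finite-Bohl-exponent} matches the paper's remark that $K\geq 1$.
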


\begin{proof}
Clearly, UBRS implies \eqref{eq:Finite-Bohl-exponent}.

Conversely, let \eqref{eq:Finite-Bohl-exponent} hold. Pick any $T>0$. By taking a larger $T$ if needed, we can assume that $T \in \mathbb{N}.$

Using \eqref{eq:Finite-Bohl-exponent} and the property $(i)$ of $\{W(t,t_0)\}_{t\geq t_0\geq0}$, we have for any $r\in\{1,\ldots,T\}$, 
and any $t\in [t_0+r-1,t_0+r]$ that
$$ 
W(t,t_0)=W(t,t_0+r-1)W(t_0+r-1,t_0+r-2)...W(t_0+2,t_0+1)W(t_0+1,t_0),
$$
and then for any $t\in [t_0+r-1,t_0+r]$
\begin{align*}
\|W(t,t_0)\| &\leq  \|W(t,t_0+r-1)\|\|W(t_0+r-1,t_0+r-2)\|\\
& \qquad \ldots \|W(t_0+2,t_0+1)\|\|W(t_0+1,t_0)\|< K^r<\infty.
\end{align*}
As $K\geq 1$ (consider $t=t_0$ in \eqref{eq:Finite-Bohl-exponent}), this ensures that
\begin{eqnarray*}
\sup_{t_0\geq 0,\ t\in[t_0,t_0+T]}\|W(t,t_0)\|\leq K^T<\infty.
\end{eqnarray*}
This finishes the proof.
\end{proof}

Now we can characterize uniform asymptotic stability in terms of uniform attractivity and UBRS.

\begin{prop}
\label{prop:Uniform-attractivity-and-uniform-stability}
$\{W(t,t_0)\}_{t\geq t_0\geq0}$ is uniformly asymptotically stable if and only if 
$\{W(t,t_0)\}_{t\geq t_0\geq0}$ is uniformly attractive and UBRS.
\end{prop}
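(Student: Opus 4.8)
The plan is to prove the two implications separately: the forward direction is almost a tautology, while the reverse direction follows from a single factorization of $W$ that splices the UBRS bound on a fixed finite horizon together with the eventual smallness provided by uniform attractivity.

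First, for \q{uniformly asymptotically stable $\Rightarrow$ uniformly attractive and UBRS}: by Definition~\ref{def:StabAttr}\,(iii), uniform asymptotic stability already contains uniform attractivity, so only UBRS remains to be checked. But uniform stability furnishes a constant $N$ with $\|W(t,t_0)\|\le N$ for all $(t,t_0)\in\mathcal{T}$, and hence $\sup_{t_0\ge0,\ t\in[t_0,t_0+T]}\|W(t,t_0)\|\le N<\infty$ for every $T>0$, which is exactly the UBRS property of Definition~\ref{def:UBRS}.

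For the converse, assume uniform attractivity and UBRS. Since uniform attractivity is one of the two defining requirements in Definition~\ref{def:StabAttr}\,(iii), I would only need to produce a uniform bound $N$ on $\|W(t,t_0)\|$ over all of $\mathcal{T}$, i.e.\ uniform stability. Apply uniform attractivity with $\epsilon=1$ to obtain a horizon $T:=T(1)>0$ such that $\|W(t,s)\|\le 1$ whenever $t\ge s+T$, and then apply UBRS with the horizon $2T$ to get $M:=\sup_{t_0\ge0,\ t\in[t_0,t_0+2T]}\|W(t,t_0)\|<\infty$; without loss of generality $M\ge1$. Now take an arbitrary $(t,t_0)\in\mathcal{T}$. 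If $t\le t_0+2T$, then $\|W(t,t_0)\|\le M$ by the choice of $M$. If $t>t_0+2T$, then $t_0+T\le t$ and I split, using property (i) of the evolution family, $W(t,t_0)=W(t,t_0+T)\,W(t_0+T,t_0)$; here $\|W(t,t_0+T)\|\le1$ because $t-(t_0+T)>T=T(1)$, and $\|W(t_0+T,t_0)\|\le M$ because $t_0+T\le t_0+2T$, so again $\|W(t,t_0)\|\le M$. Thus $N:=M$ witnesses uniform stability, and, combined with the assumed uniform attractivity, this gives uniform asymptotic stability.

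I do not anticipate a genuine obstacle: the statement is elementary once the evolution identity $W(t,s)=W(t,r)W(r,s)$ is in hand. The only delicate point is the bookkeeping of the two time scales — the UBRS horizon must be taken at least as large as $T(1)$ (the choice $2T$ above is convenient) so that the short-time regime covered directly by UBRS overlaps the regime in which the factorization with the contracting factor $W(t,t_0+T)$ is available, leaving no gap between them. One could instead invoke Lemma~\ref{lem:UBRS-criterion} and argue with the unit horizon, but this brings no real simplification.
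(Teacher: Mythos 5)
Your proof is correct and follows essentially the same route as the paper: cover the finite horizon by UBRS and the tail by uniform attractivity. The only difference is that your factorization $W(t,t_0)=W(t,t_0+T)W(t_0+T,t_0)$ for $t>t_0+2T$ is superfluous --- uniform attractivity already bounds $\|W(t,t_0)\|\le 1$ directly for every pair with $t\ge t_0+T$, so there is no gap between the two regimes to bridge, and the paper simply takes $N=\max(K,\kappa)$ with UBRS applied on the horizon $T$ itself.
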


\begin{proof}
Let $\kappa>0.$ By the uniform attractivity of $\{W(t,t_0)\}_{t\geq t_0\geq0}$, there exists $T=T(\kappa)>0,$ such that $\|W(t,t_0)\|\leq \kappa$, for all $t_0\geq0$ and all $t\geq t_0+T.$ 

By UBRS, there is $K>0$ so that
\begin{eqnarray}
\sup_{t_0\geq 0,\ t\in[t_0,t_0+T]}\|W(t,t_0)\|\leq K<\infty.
\label{eq:Finite-Bohl-exponentT}
\end{eqnarray}
Choosing $ N:=\max\left(K,\kappa\right) $, we obtain $\|W(t,t_0)\|\leq
N$, for all $t\geq t_0\geq0$. Hence, $\{W(t,t_0)\}_{t\geq t_0\geq0}$ is
uniformly stable, and overall it is uniformly asymptotically stable.
The converse implication is obvious.
\end{proof}

In view of \cite[Theorem 4.2]{DaK74}, the condition \eqref{eq:Finite-Bohl-exponent} is equivalent to the finiteness of the upper Bohl exponent of \eqref{linear}. This condition holds, e.g., if $A(\cdot)$ is continuous and uniformly bounded on $[0,\infty)$.
Without finiteness of the upper Bohl exponent,
Proposition~\ref{prop:Uniform-attractivity-and-uniform-stability} does not
necessarily hold, in contrast to the time-invariant linear systems, where
strong stability of a $C_0$-semigroup (i.e. non-uniform global
attractivity of a dynamical system) always implies uniform stability. We
illustrate this fact by means of Example
\ref{ex:Uniform-attractivity-and-uniform-stability} in the Appendix.

\section{Lyapunov criteria for ISS of time-varying linear systems}
\label{sec:lyap-crit-iss}

\

We now consider a special case of (\ref{R1}), namely linear systems on a Banach space $X$ of the form: 
\begin{equation}\label{unb}
\left\lbrace
\begin{array}{l}
\dot{x}(t)=A(t)x(t)+B(t)u(t), \qquad t\ge t_0\ge0,\\
x(t_0)=x_0,
\end{array}\right.
\end{equation}
where $B\in C(\mathbb{R}_{+},L(U,X)),$ with $\displaystyle
\sup_{t\geq0}\|B(t)\|<\infty.$ We assume that the family $\{ A(t)
\}_{t\geq 0}$ generates a strongly continuous evolution family $\{W(t,s)\}_{t\geq s\geq0}$. As in the general nonlinear case, we assume that inputs belong to the space $\mathcal{U}=PC(\mathbb{R}_{+},U).$
Since $B$ is continuous and $u$ is piecewise right continuous, $\Psi:(t,x,u)\mapsto B(t)u(t)$ satisfies the assumption $(\mathcal{H}_1)$ and, according to Proposition \ref{propooo1}, there is a unique global mild solution $\phi(\cdot,t_0,x_0,u)$ of system (\ref{unb}) for any data $(t_0,x_0,u).$
By definition, it has the form
\begin{equation}\label{b5484}
\phi(t,t_0,x_0,u)= W(t,t_0)x_0+\int_{t_0}^{t}W(t,\tau)B(\tau)u(\tau)d\tau.
\end{equation}

\subsection{Uniformly bounded and continuous $A$}
\label{sec:unif-bound-cont}

In this part, we concentrate on the case of Hilbert spaces $X$ endowed with the scalar product $\langle \cdot,\cdot\rangle.$
We start with a Lyapunov characterization of the ISS property for the
system \eqref{unb} for the case that $A: \R_+ \to L(X)$ is continuous in
the uniform operator topology. Under this assumption, the mild solutions
of \eqref{unb} exist and are piecewise classical solutions in the sense of Definition \ref{rrnnann23}.

\begin{prop}\label{prop:mild-is-classical}
Let $A : \R_+ \to L(X)$ be continuous and uniformly bounded in the uniform operator topology. Then, for every $u\in \mathcal{U},$ every initial time $t_0$ and every initial condition $x_0,$ the 
mild solution $\phi(\cdot,t_0,x_0,u)$ given by (\ref{b5484}) belongs to $PC^{1}([t_0,\infty),X)$ and is the unique piecewise classical solution of (\ref{unb}).
\end{prop}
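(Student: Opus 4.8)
The plan is to differentiate the variation-of-constants representation (\ref{b5484}) directly — using that uniform boundedness of $A(\cdot)$ allows one to pull $A(t)$ through the Bochner integral — and then to establish uniqueness by the standard argument of differentiating $s\mapsto W(t,s)y(s)$ along an arbitrary piecewise classical solution $y$. In essence this is the piecewise analogue of the classical fact that mild solutions of $\dot x=A(t)x+f(t)$ with bounded $A$ and continuous $f$ are classical.

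First I would fix data $(t_0,x_0,u)$ and record two preliminary facts. Since $u\in PC(\mathbb{R}_+,U)$ has a locally finite set of discontinuities $(t_k)$ and $B$ is norm-continuous, the map $f:\tau\mapsto B(\tau)u(\tau)$ is continuous on each interval of continuity of $u$ and right-continuous everywhere. Because $D(A(t))=X$, property (iii) of the evolution family applies to every $v\in X$, so $t\mapsto W(t,s)v$ is differentiable with $\partial_t\bigl(W(t,s)v\bigr)=A(t)W(t,s)v$; this derivative is continuous in $t$ by norm-continuity of $A$ and strong continuity of $W$, hence $t\mapsto W(t,s)v$ is $C^1$. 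Finally, by the uniform boundedness principle (or directly, since $\{W(t,s)\}$ is an evolution family) $W$ is bounded on each compact triangle $\{t_0\le s\le t\le T\}$ by some $C_T$, and $\|A(t)\|\le M$ for all $t$ by hypothesis.

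Next I would differentiate (\ref{b5484}). The homogeneous term $t\mapsto W(t,t_0)x_0$ is $C^1$ with derivative $A(t)W(t,t_0)x_0$. For $I(t):=\int_{t_0}^t W(t,\tau)f(\tau)\,d\tau$ and $t$ in the interior of an interval of continuity of $u$, I would split the difference quotient as
\[
\frac{I(t+h)-I(t)}{h}=\frac1h\int_t^{t+h}W(t+h,\tau)f(\tau)\,d\tau+\int_{t_0}^t\frac{W(t+h,\tau)f(\tau)-W(t,\tau)f(\tau)}{h}\,d\tau .
\]
As $h\to0$ the first term tends to $W(t,t)f(t)=f(t)$, using continuity of $f$ at $t$, boundedness of $W$ and strong continuity. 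For the second term I would write $W(t+h,\tau)f(\tau)-W(t,\tau)f(\tau)=\int_t^{t+h}A(\sigma)W(\sigma,\tau)f(\tau)\,d\sigma$ by the preliminary $C^1$ fact, observe that the integrand is bounded by $M C_T\|f\|_\infty$ uniformly in $\tau$, and apply dominated convergence to get the limit $\int_{t_0}^t A(t)W(t,\tau)f(\tau)\,d\tau=A(t)\int_{t_0}^t W(t,\tau)f(\tau)\,d\tau=A(t)I(t)$, where boundedness of $A(t)$ is used to factor it out of the Bochner integral. Thus $\phi'(t)=A(t)W(t,t_0)x_0+f(t)+A(t)I(t)=A(t)\phi(t)+B(t)u(t)$ at every point that is not a discontinuity of $u$; since $u$ is right-continuous, the same identity holds for the right derivative at each $t_k$, and the right-hand side is right-continuous. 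Hence $\phi\in PC^{1}([t_0,\infty),X)$ and $\phi$ is a piecewise classical solution in the sense of Definition~\ref{rrnnann23}. For uniqueness, let $y$ be any piecewise classical solution with the same data and fix $t>t_0$. On each subinterval of the (finite, on $[t_0,t]$) partition on which $y$ is $C^1$, property (iii) gives $\tfrac{d}{ds}W(t,s)y(s)=-W(t,s)A(s)y(s)+W(t,s)y'(s)=W(t,s)\bigl(y'(s)-A(s)y(s)\bigr)=W(t,s)B(s)u(s)$; since $s\mapsto W(t,s)y(s)$ is continuous on $[t_0,t]$, telescoping over the subintervals and integrating yields $y(t)-W(t,t_0)x_0=\int_{t_0}^t W(t,s)B(s)u(s)\,ds$, i.e.\ $y(t)=\phi(t)$. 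So $\phi$ is the unique piecewise classical solution.

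I expect the main obstacle to be the differentiation under the integral sign in the second step: it is precisely the uniform boundedness of $A(\cdot)$ that provides the domination needed for dominated convergence and that legitimizes pulling $A(t)$ out of the Bochner integral — without it neither manipulation is available. The remaining work is purely the bookkeeping at the discontinuities of $u$, which is harmless since the definition of $PC^{1}$ only demands one-sided (right) differentiability there.
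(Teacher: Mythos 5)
Your proposal is correct and follows essentially the same route as the paper: the paper's proof simply cites Pazy's results for evolution families generated by bounded, norm-continuous $A(\cdot)$ (Theorems 5.1--5.2 and the computation around Equation (1.18) on p.~129), which is exactly the direct differentiation of the variation-of-constants formula that you carry out in detail, together with the standard uniqueness argument via $s\mapsto W(t,s)y(s)$. Your version is just the fully written-out form of the argument the paper delegates to the reference, with the correct extra care (dominated convergence, pulling the bounded operator $A(t)$ through the Bochner integral, and one-sided derivatives at the discontinuities of $u$).
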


\begin{proof}
By \cite[Theorem 5.1, p. 127]{Paz83}, for each initial time $t_0$ and each initial condition $x_0$, there is a 
unique classical solution $x(\cdot)$ for the (undisturbed) system (\ref{linear}). Defining
$$W(t,t_0)x_0:=x(t),\quad t\geq t_0,$$
we construct an evolution family $\{W(t,t_0)\}_{t\geq t_0\geq0}: X \to  X$ of bounded operators, generated by $\{A(t)\}_{t\geq0}.$ Furthermore, according to \cite[Theorem 5.2, p. 128]{Paz83}, we have the following formulas for the partial derivatives of $W$ taken in $L(X):$
$$\frac{\partial}{\partial t}W(t,s)=A(t)W(t,s),\quad \frac{\partial}{\partial s}W(t,s)=-W(t,s)A(s).$$
The result follows using arguments as in \cite[Equation (1.18), p. 129]{Paz83}.
\end{proof}
For the Lyapunov analysis of linear systems, we recall the following notions.
A self-adjoint operator $S \in L(X)$ is said to be positive, if $\langle Sx,x\rangle >0$ for all $x\in X\backslash\{0\}.$ We call an operator-valued function $P:\mathbb{R}_{+}\to L(X)$ positive if $P(t)$ is self-adjoint and positive for any $t\geq0$.
An operator-valued function $P:\mathbb{R}_{+}\to L(X)$ is called coercive,
if it is positive and there exists $\mu>0,$ such that
$$\langle P(t)x,x\rangle\geq \mu\|x\|^{2}_{X} \quad \forall x\in X.$$
The following result guaranteeing the existence of a coercive solution for operator Lyapunov equations will be helpful in the sequel. It is related to analogous characterizations of exponential dichotomy given in \cite[Corollary 4.48]{carmen}.
\begin{prop}\label{hmfdrr}\cite[Theorem $5.2$]{damak2021input}.
Let $A : \R_+ \to L(X)$ be continuous and uniformly bounded in the uniform
operator topology.
Then, the evolution operator generated by $\{A(t)\}_{t\geq0}$ is uniformly exponentially stable if and only if there exists a continuously differentiable, bounded, coercive positive operator-valued function $P:\mathbb{R}_{+}\to L(X)$, satisfying the Lyapunov equality
\begin{equation}\label{R2}
A(t)^*P(t)+ P(t)A(t)+\dot{P}(t)=-I,\quad t\geq 0.
\end{equation}
\end{prop}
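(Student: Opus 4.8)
The plan is to prove both implications by relating the Lyapunov operator $P(t)$ to the evolution family $W(t,s)$ via the integral formula
\[
P(t) := \int_t^\infty W(s,t)^* W(s,t)\, ds .
\]
First I would verify that this integral is well-defined and has the desired properties when the evolution family is uniformly exponentially stable. By Lemma~\ref{lemmmcc1} (or directly from the hypothesis), there are $k,\omega>0$ with $\|W(s,t)\|\le k e^{-\omega(s-t)}$ for $s\ge t\ge 0$, so the integrand is bounded in operator norm by $k^2 e^{-2\omega(s-t)}$ and the integral converges absolutely in $L(X)$, with $\|P(t)\|\le k^2/(2\omega)$ uniformly in $t$; this gives boundedness. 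Self-adjointness and positivity of $P(t)$ are immediate from the form $W(s,t)^*W(s,t)$, and coercivity follows because $W(t,t)=I$ forces $\langle P(t)x,x\rangle \ge \int_t^{t+1}\|W(s,t)x\|^2\,ds$, which is bounded below by a positive multiple of $\|x\|_X^2$ using a lower bound on $\|W(s,t)x\|$ for $s$ near $t$ (obtained from $\|W(t,t)x - W(s,t)x\|$ being small by strong continuity, or more cleanly from the variation-of-constants/Gronwall bound $\|W(s,t)x\|\ge e^{-\|A\|_\infty(s-t)}\|x\|_X$, where $\|A\|_\infty := \sup_{t\ge0}\|A(t)\|$).

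Next I would establish differentiability of $P$ and the Lyapunov equality \eqref{R2}. Using the substitution $\sigma = s-t$ one can rewrite $P(t)=\int_0^\infty W(t+\sigma,t)^*W(t+\sigma,t)\,d\sigma$, and differentiating under the integral sign is justified by the uniform exponential bounds together with property (iii) of the evolution family, which gives $\frac{\partial}{\partial t}W(t+\sigma,t) = A(t+\sigma)W(t+\sigma,t)\frac{d(t+\sigma)}{dt} - W(t+\sigma,t)A(t)$ — the cleanest route is to differentiate $s\mapsto W(s,t)^*W(s,t)$ in $s$ and in $t$ on initial data in $D(A(t))$, where by (iii) we have $\frac{\partial}{\partial s}W(s,t)v = A(s)W(s,t)v$ and $\frac{\partial}{\partial t}W(s,t)v = -W(s,t)A(t)v$. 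A Leibniz computation then yields $\dot P(t) = -W(t,t)^*W(t,t) - A(t)^*P(t) - P(t)A(t) = -I - A(t)^*P(t)-P(t)A(t)$, first on a dense set and then on all of $X$ by boundedness of the operators involved; continuity of $t\mapsto \dot P(t)$ follows from continuity of $A$ and of $P$.

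For the converse, assume a bounded, coercive, continuously differentiable positive $P$ solving \eqref{R2}. Fix $x_0\in X$, $t_0\ge0$, let $x(t)=W(t,t_0)x_0$ be the classical solution from Proposition~\ref{prop:mild-is-classical}, and set $V(t)=\langle P(t)x(t),x(t)\rangle$. Differentiating and using \eqref{R2} gives $\dot V(t) = \langle (A(t)^*P(t)+P(t)A(t)+\dot P(t))x(t),x(t)\rangle = -\|x(t)\|_X^2$. Since $P$ is bounded, say $\|P(t)\|\le M$, and coercive with constant $\mu$, we have $V(t)\le M\|x(t)\|_X^2 \le (M/\mu)V(t)$ wait — better: $\dot V(t) = -\|x(t)\|_X^2 \le -\tfrac1M V(t)$, so by Gronwall $V(t)\le V(t_0)e^{-(t-t_0)/M}$, and then $\mu\|x(t)\|_X^2\le V(t)\le V(t_0)e^{-(t-t_0)/M}\le M\|x_0\|_X^2 e^{-(t-t_0)/M}$, which is exactly uniform exponential stability of $W$ with $k=\sqrt{M/\mu}$, $\omega=1/(2M)$. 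By Lemma~\ref{lemmmcc1} this is equivalent to uniform exponential stability of the evolution operator. I expect the main obstacle to be the rigorous justification of differentiating $P(t)$ under the integral sign and passing the Leibniz-rule identity from the dense domain $D(A(t))$ to all of $X$; everything else is a routine Gronwall/comparison argument. Since this is essentially \cite[Theorem 5.2]{damak2021input}, one may alternatively just cite that reference and only sketch the construction.
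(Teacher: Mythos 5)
Your proposal is correct and follows essentially the same route as the paper: for the forward direction the paper also defines $P(t)=\int_t^\infty W(\tau,t)^*W(\tau,t)\,d\tau$, checks convergence and coercivity (citing \cite[Lemma 5.1]{damak2021input} where you give the Gronwall lower bound on $\|W(s,t)x\|_X$ explicitly, which is valid since $A$ is bounded), and obtains \eqref{R2} by the same Leibniz computation. For the converse the paper simply cites \cite[Theorem 5.2]{damak2021input}, and your explicit argument via $V(t)=\langle P(t)x(t),x(t)\rangle$, $\dot V=-\|x(t)\|_X^2\le -V/M$, and Gronwall is precisely the standard proof behind that citation.
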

\begin{proof}
According to our assumption and arguments at the beginning of the
subsection, the family $\{A(t)\}_{t\geq0}$ generates a strongly continuous
evolution family of bounded operators $\{W(t,s)\}_{t\geq s\geq0}$. Assume
that $\{W(t,t_0)\}_{t\geq t_0\geq0}$ is uniformly exponentially stable. Define the operator-valued function $P$ by
$$P(t)=\int_{t}^{\infty}W(\tau,t)^{*}W(\tau,t)d\tau.$$
The integral converges by the uniform exponential stability.
 Thanks to the continuity of $t\mapsto A(t)$, the map $(\tau,t)\mapsto
 W(\tau,t)$ is uniformly continuous by \cite[Theorem 5.2, p.128]{Paz83},
 and thus $P$ is continuously differentiable. $P$ is coercive due to \cite[Lemma 5.1]{damak2021input}.
 Thus, we can compute
 $\dot{P}$ as in \cite[Theorem $5.2$]{damak2021input} and obtain
$$\dot{P}(t)= \int _{t} ^{\infty} W(\tau,t)\frac{\partial}{\partial
  t}W(\tau,t)d\tau+\int_{t}^{\infty }\frac{\partial}{\partial
  t}W(\tau,t)^*W(\tau,t)d\tau-I = -P(t)A(t) - A(t)^*P(t) -I. $$

For the converse direction see \cite[Theorem 5.2]{damak2021input}.
\end{proof}

Next, we give criteria for ISS of time-varying linear systems (\ref{unb}) with a family of the linear uniformly bounded operators 
$\{A(t)\}_{t\geq0}.$

\begin{thm} Let $A : \R_+ \to L(X)$ be continuous and uniformly bounded in the uniform
operator topology with associated evolution operator $\{W(t,t_0)\}_{t\geq t_0 \geq 0}$. 
The following statements are equivalent:
\begin{itemize}
\item[$(i)$] (\ref{unb}) is ISS. 
\item[$(ii)$] (\ref{unb}) is 0-UGAS.
\item[$(iii)$] (\ref{unb}) is iISS.
\item[$(iv)$] The evolution operator $\{W(t,t_0)\}_{t\geq t_0 \geq 0}$ is uniformly asymptotically stable.
\item[$(v)$] The evolution operator $\{W(t,t_0)\}_{t\geq t_0 \geq 0}$ is uniformly exponentially stable.
\item[$(vi)$] There exists a continuously
differentiable, bounded, positive, coercive operator-valued function
$P:\R_+ \to {\cal L}(X)$ satisfying \eqref{R2}, and the function  $V: \mathbb{R}_{+}\times X \to \mathbb{R}_{+}$ defined by
\begin{equation}\label{R1Ev}
V(t,x)=\langle P(t)x,x\rangle, \quad t\geq0, \quad x\in X,
\end{equation}
is an ISS Lyapunov function for \eqref{unb}.
\end{itemize}
\end{thm}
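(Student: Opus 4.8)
The plan is to establish all six equivalences via the chain $(ii)\Rightarrow(iv)\Leftrightarrow(v)\Leftrightarrow(vi)\Rightarrow(i)\Rightarrow(ii)$, supplemented by the branch $(vi)\Rightarrow(iii)\Rightarrow(ii)$, so that every item is reachable from any other. Several links are immediate from results already available. The equivalence $(iv)\Leftrightarrow(v)$ is precisely the equivalence of items $(i)$ and $(iii)$ in Lemma~\ref{lemmmcc1}. The implication $(vi)\Rightarrow(v)$ is the ``only if'' direction of Proposition~\ref{hmfdrr}, since the existence of a bounded, coercive, positive $P$ solving the Lyapunov equality \eqref{R2} forces uniform exponential stability of $\{W(t,t_0)\}$; conversely, the ``if'' direction of that proposition supplies, for use in $(v)\Rightarrow(vi)$, a continuously differentiable, bounded, coercive, positive operator-valued function $P:\mathbb{R}_+\to L(X)$ satisfying \eqref{R2}. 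Finally, $(i)\Rightarrow(ii)$ and $(iii)\Rightarrow(ii)$ follow by setting $u\equiv0$ in \eqref{rhm21} and in \eqref{EQua1in} respectively, recalling from \eqref{b5484} that $\phi(\cdot,t_0,x_0,0)=W(\cdot,t_0)x_0$ and that the undisturbed trajectory is global.

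The heart of the argument is $(v)\Rightarrow(vi)$, namely verifying that the quadratic form $V(t,x)=\langle P(t)x,x\rangle$ is an ISS Lyapunov function for \eqref{unb} for the $P$ furnished by Proposition~\ref{hmfdrr}. First I would note $V(t,0)=0$, and that positivity, coercivity, and boundedness of $P$ give $\mu\|x\|_X^2\le V(t,x)\le M_P\|x\|_X^2$ with $M_P:=\sup_{t\ge0}\|P(t)\|<\infty$, i.e. the sandwich bound \eqref{R1E} (hence also \eqref{R1EN}) with quadratic $\alpha_1,\alpha_2\in\mathcal{K}_\infty$. For the dissipation estimate I would compute the Lie derivative \eqref{eq:Liederivativedef} directly from the mild-solution formula \eqref{b5484}: using right-continuity of $u$ at $t$, continuity of $B$, and differentiability of $W$ (as in Proposition~\ref{prop:mild-is-classical} and \cite[Theorem~5.2, p.128]{Paz83}), one obtains $\phi(t+h,t,x,u)=x+h\bigl(A(t)x+B(t)u(t)\bigr)+o(h)$ as $h\to0^+$; expanding $\langle P(t+h)\phi(t+h,t,x,u),\phi(t+h,t,x,u)\rangle$ to first order, using self-adjointness of $P(t)$ and the Lyapunov equality \eqref{R2}, gives
\[
\dot V_u(t,x)=-\|x\|_X^2+2\,\re\langle P(t)x,B(t)u(t)\rangle .
\]
Cauchy--Schwarz, the bounds $M_P<\infty$ and $M_B:=\sup_{t\ge0}\|B(t)\|<\infty$, and Young's inequality then yield
\[
\dot V_u(t,x)\le-\tfrac12\|x\|_X^2+2M_P^2M_B^2\,\|u(t)\|_U^2 ,
\]
which is exactly \eqref{R2E} with $\eta(r)=\tfrac12 r^2\in\mathcal{P}$ (unbounded) and $\chi(r)=2M_P^2M_B^2r^2\in\mathcal{K}$. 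As $\eta$ is unbounded, $V$ is a coercive ISS Lyapunov function in dissipative form for \eqref{unb}, and in particular a coercive iISS Lyapunov function; this proves $(vi)$.

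To close the cycle: since \eqref{unb} is a well-posed control system with the BIC property (Proposition~\ref{propooo1}) and $\mathcal{U}=PC(\mathbb{R}_+,U)$, applying Theorem~\ref{hjljg} to the function $V$ just constructed gives both $(vi)\Rightarrow(i)$ and $(vi)\Rightarrow(iii)$. For $(ii)\Rightarrow(iv)$, $0$-UGAS together with $\phi(t,t_0,x_0,0)=W(t,t_0)x_0$ gives $\|W(t,t_0)x_0\|_X\le\beta(\|x_0\|_X,t-t_0)$ for some $\beta\in\mathcal{KL}$; applying this to $x_0/\|x_0\|_X$ and using linearity yields $\|W(t,t_0)\|\le\beta(1,t-t_0)$ for all $t\ge t_0\ge0$, which is bounded by $\beta(1,0)$ (uniform stability) and tends to $0$ as $t-t_0\to\infty$ uniformly in $t_0$ (uniform attractivity), hence $\{W(t,t_0)\}$ is uniformly asymptotically stable. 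The step I expect to be most delicate is the Lie-derivative computation in $(v)\Rightarrow(vi)$: justifying the first-order expansion of $\phi(t+h,t,x,u)$ as $h\to0^+$ at the countably many discontinuity points of $u$, where only one-sided continuity of the input is available, and tracking the self-adjointness/real-part terms so that \eqref{R2} can be substituted cleanly.
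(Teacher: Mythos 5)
Your proposal is correct, and its core — the implication $(v)\Rightarrow(vi)$ via the $P$ from Proposition~\ref{hmfdrr}, the quadratic form $V(t,x)=\langle P(t)x,x\rangle$, the sandwich bounds from coercivity and boundedness of $P$, and the dissipation estimate obtained by substituting the Lyapunov equality \eqref{R2} and applying Cauchy--Schwarz and Young — is exactly the paper's argument. The only structural difference is how the cycle through $(i)$--$(v)$ is closed: the paper simply cites earlier results of Damak et al.\ for the equivalences $(i)$--$(v)$ and invokes Theorem~\ref{hjMMMMMljg} for $(vi)\Rightarrow(i)$, whereas you give a self-contained chain, proving $(ii)\Rightarrow(iv)$ directly from linearity of $W$ (so that $\|W(t,t_0)\|\le\beta(1,t-t_0)$), taking $(iv)\Leftrightarrow(v)$ from Lemma~\ref{lemmmcc1}, and deriving both ISS and iISS from the dissipative Lyapunov function via Theorem~\ref{hjljg}, using that $\eta(r)=\tfrac12 r^2$ is unbounded. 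Your route buys independence from the external reference and makes the iISS claim an immediate corollary of the Lyapunov construction; the paper's route is shorter on the page. One point handled slightly differently: the paper justifies the Lie-derivative computation by noting that mild solutions are piecewise classical (Proposition~\ref{prop:mild-is-classical}) and differentiating $t\mapsto\langle P(t)x(t),x(t)\rangle$ outside a countable set, while you expand $\phi(t+h,t,x,u)$ to first order directly from the mild-solution formula using right-continuity of $u$; both are valid, and your version arguably deals more cleanly with the discontinuity points of $u$, since the Dini derivative in \eqref{eq:Liederivativedef} is one-sided anyway.
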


\begin{proof}
The equivalences (i)--(v) are covered by \cite[Theorem 4.1]{damak2021input},
\cite[Corollary 4.1]{damak2021input}, and \cite[Lemma
4.1]{damak2021input}.  It
follows from Theorem \ref{hjMMMMMljg} that (vi) implies (i).

(v) $\Longrightarrow$ (vi). As the evolution operator
$\{W(t,t_0)\}_{t\geq t_0 \geq 0}$ is uniformly exponentially stable,
Proposition \ref{hmfdrr} yields that there exists a continuously
differentiable, bounded, positive, coercive operator-valued function $P(\cdot)$ satisfying \eqref{R2}. Consider $V:\mathbb{R}_{+}\times X \to \mathbb{R}_{+}$ as defined in (\ref{R1Ev}). As $P$ is continuous, $V$ is continuous as well. Since $P$ is bounded and coercive, for some $\mu_1,p> 0$, it holds that
$$\mu_1\|x\|^{2}_{X}\leq V(t,x)\leq p\|x\|^{2}_{X}\quad \forall x \in X\quad \forall t\geq 0,$$
where $p=\displaystyle \sup_{t\in \mathbb{R}_{+}}\|P(t)\|$. Thus, (\ref{R1E}) holds.

For any given $x_0\in X,$ for any $u\in\mathcal{U},$ any $t_0\geq0,$ and
since the corresponding mild solution $x(t)=\phi(t,t_0,x_0,u)$ is a
piecewise classical solution by Proposition~\ref{prop:mild-is-classical},
it is piecewise continuously differentiable. Hence, with the exception of
countably many points the Lie derivative of $V$ along the trajectories of
system (\ref{unb}) satisfies (omitting the argument $t$ for legibility)
\begin{align*}
\dot{V}_{u}(t,x)&=\langle\dot{P}(t)x,x\rangle+\langle P(t)\dot{x},x\rangle +\langle P(t)x,\dot{x}\rangle\\
&=\langle \dot{P}(t)x,x\rangle+ \langle P(t)[A(t)x+B(t)u],x(t)\rangle+\langle P(t)x, A(t)x+B(t)u\rangle\\
&=\langle \dot{P}(t)x,x\rangle+\langle P(t)A(t)x,x\rangle+\langle
P(t)B(t)u,x\rangle +\langle P(t)x,A(t)x\rangle+\langle P(t)x,B(t)u\rangle.
\end{align*}
Since $\langle P(t)x,A(t)x\rangle=\langle A(t)^*P(t)x,x\rangle$, 
by applying the Lyapunov equation (\ref{R2}) 
and using the Cauchy-Schwarz inequality, we obtain
\begin{equation*}
    \dot{V}_{u}(t,x)\leq -\|x\|^{2}_{X}+2
    p\|x\|_{X}\|B(t)\|\|u\|_{\mathcal{U}} \leq -\|x\|^{2}_{X}+2
    p\|x\|_{X}\|B\|_\infty\|u\|_{\mathcal{U}},
\end{equation*}
where $\|B\|_{\infty}=\sup_{t\in \mathbb{R_{+}}}\|B(t)\|<\infty$. 
Utilizing Young's inequality we have for any $\eta>0$
$$\dot{V}_{u}(t,x)\leq -\|x\|^{2}_{X}+ p\|B\|_{\infty}\eta\|x\|^{2}_{X}+\frac{p\|B\|_{\infty}}{\eta}\|u\|^{2}_{\mathcal{U}},$$
Choosing
$\eta<(p\|B\|_{\infty})^{-1}$ this shows that $V$ is an ISS Lyapunov function for (\ref{unb}).
\end{proof}

\subsection{Evolution families}

In the remainder, all vector spaces will assumed to be Banach spaces.
In this subsection, we extend the analysis of ISS Lyapunov functions to
systems where the operators in the family $\{A(t): D(A(t))\subset X\to
X\}_{t\geq 0}$ are generally unbounded. 
As before, we assume that $\mathcal{U}=PC(\mathbb{R}_+, U)$. 
Again, we obtain that uniform exponential stability of the evolution family
$\{W(t,\tau)\}_{t\geq \tau\geq0}$, characterizes ISS 
and allows for the construction of noncoercive Lyapunov functions for systems with unbounded operators $A(\cdot)$.

In this subsection, we establish the equivalence between various stability concepts for the system (\ref{unb}) including ISS, 0-UGAS, iISS, and the uniform exponential stability of the evolution operator. Theorem \ref{hdkmnmlo} also proposes a non-coercive ISS Lyapunov function for the system.
\

We start with a simple lemma. 
\begin{lem}\label{hnlkfrankm}
Let $B\in C(\mathbb{R}_{+},L(X))$ with $\sup_{t\in \mathbb{R_{+}}}\|B(t)\| < \infty,$ and $\{W(t,s)\}_{t\geq s\geq0}$ be a strongly continuous evolution family. Then, for any $u\in\mathcal{U}$ and any $t\geq0$ it holds that 
$$ \displaystyle\lim _{h \to 0^{+}}\frac{1}{h}\int_{t}^{t+h}W(t,s)B(s)u(s)ds=B(t)u(t). $$
\end{lem}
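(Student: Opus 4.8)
The plan is to compare the averaged integral with the constant $B(t)u(t)$ and show the difference vanishes as $h\to0^{+}$. Since $B(t)u(t)$ does not depend on the integration variable, $\frac{1}{h}\int_{t}^{t+h}B(t)u(t)\,ds=B(t)u(t)$, so it suffices to prove that $\frac{1}{h}\int_{t}^{t+h}\bigl(W(t,s)B(s)u(s)-B(t)u(t)\bigr)\,ds\to0$ in $X$. Estimating the norm of the average by $\sup_{s\in[t,t+h]}\|W(t,s)B(s)u(s)-B(t)u(t)\|_{X}$, the claim reduces to the pointwise statement $W(t,s)B(s)u(s)\to B(t)u(t)$ in $X$ as $s\to t^{+}$.

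To get this, I would split $W(t,s)B(s)u(s)-B(t)u(t)=W(t,s)\bigl(B(s)u(s)-B(t)u(t)\bigr)+\bigl(W(t,s)-I\bigr)B(t)u(t)$. The second summand tends to $0$ by strong continuity of the evolution family together with $W(t,t)=I$. For the first summand, I would first record that $\|W(t,s)\|$ stays bounded, say by a constant $M$, as $s\to t^{+}$; this is where one invokes a genuine functional-analytic tool, namely the uniform boundedness principle applied to the strongly continuous — hence pointwise bounded on compact time sets — family $W(t,\cdot)$. It then remains to show $\|B(s)u(s)-B(t)u(t)\|_{X}\to0$; writing $B(s)u(s)-B(t)u(t)=\bigl(B(s)-B(t)\bigr)u(s)+B(t)\bigl(u(s)-u(t)\bigr)$, this follows from continuity of $B$ in the operator norm, the uniform bound $\sup_{t}\|B(t)\|<\infty$, and the right-continuity of $u$ at $t$. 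Note that the last point uses that $u\in PC(\mathbb{R}_{+},U)$ has only isolated discontinuities, so $u$ is continuous on some interval $(t,t+\delta)$ and in particular $u(s)\to u(t)$ as $s\to t^{+}$, with $\|u(s)\|_{U}$ bounded there.

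Assembling the pieces: given $\varepsilon>0$, one picks $\delta>0$ so small that for all $s\in(t,t+\delta)$ one has $\|W(t,s)\|\le M$, $\|\bigl(W(t,s)-I\bigr)B(t)u(t)\|_{X}<\varepsilon/2$, and $\|B(s)u(s)-B(t)u(t)\|_{X}<\varepsilon/(2M)$; then $\|W(t,s)B(s)u(s)-B(t)u(t)\|_{X}<\varepsilon$ for such $s$, hence $\bigl\|\frac{1}{h}\int_{t}^{t+h}W(t,s)B(s)u(s)\,ds-B(t)u(t)\bigr\|_{X}\le\frac{1}{h}\int_{t}^{t+h}\varepsilon\,ds=\varepsilon$ for every $0<h<\delta$, which is the assertion. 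I expect the only mildly delicate step to be the local boundedness of $\|W(t,s)\|$ near the diagonal — the hypotheses only give strong, not norm, continuity of $W$, so this needs the uniform boundedness principle — while everything else is a routine $\varepsilon$-estimate built directly from the continuity assumptions on $B$ and $u$. (Measurability of $s\mapsto W(t,s)B(s)u(s)$, needed for the integral to make sense, is immediate from strong continuity of $W(t,\cdot)$, continuity of $B$, and piecewise continuity of $u$.)
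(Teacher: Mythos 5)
The paper states this lemma without proof, so there is no in-paper argument to compare against; your proof is the natural one and, step by step, it is sound: reduce the averaged integral to right-continuity of the integrand at $s=t$, control $\|W(t,\cdot)\|$ near the diagonal via the uniform boundedness principle (legitimate, since $X$ is Banach in this section and strong continuity gives pointwise boundedness on compact parameter sets), and obtain $B(s)u(s)\to B(t)u(t)$ from norm-continuity of $B$ plus the fact that $PC(\mathbb{R}_{+},U)$ is by definition right-continuous and globally bounded. The final $\varepsilon$--$\delta$ assembly and the integrability remark are also correct.

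The one point you should not pass over is that $W(t,s)$ for $s\in(t,t+h]$ is not defined by the hypotheses: an evolution family is given only for $t\ge s$, and neither strong continuity nor $W(t,t)=I$ says anything about the region $s>t$. Your key step $(W(t,s)-I)B(t)u(t)\to 0$ as $s\to t^{+}$ therefore presupposes an object that the lemma does not provide (it would require invertibility of the evolution operators). This is really a defect of the lemma as stated — the quantity that actually arises from the mild-solution formula, and the one needed in the proof of Theorem~\ref{hdkmnmlo}, is $\frac{1}{h}\int_{t}^{t+h}W(t+h,s)B(s)u(s)\,ds$, with $t+h\ge s$ — but your write-up inherits it. With that correction your argument goes through verbatim: joint strong continuity of $(t',s')\mapsto W(t',s')$ at the diagonal point $(t,t)$ gives $(W(t+h,s)-I)B(t)u(t)\to 0$ uniformly for $s\in[t,t+h]$ as $h\to 0^{+}$, and the uniform boundedness principle applied on the compact triangle $\{(t',s'):t\le s'\le t'\le t+1\}$ supplies the constant $M$. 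Apart from this (and the harmless typo $L(X)$ versus $L(U,X)$ in the hypothesis on $B$), the proof is complete.
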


%
%
%
%

Next, we generalize our results about ISS Lyapunov functions to the case of the unbounded operator $A(\cdot)$ and derive a constructive converse Lyapunov theorem for (\ref{unb}) with  bounded input operators $\{B(t)\}_{t\geq0}$. 
Our construction \eqref{eq:non-coercive ISS-LF-linsys} is motivated by a corresponding construction for linear time-invariant systems, see \cite{mironchenko2018lyapunov}.

\begin{thm}\label{hdkmnmlo}
Let the family $\{A(t)\}_{t\geq0}$ generate a strongly
continuous evolution family $\{W(t,s)\}_{t\geq s\geq0}.$ Assume
$B\in C(\mathbb{R}_{+},L(U,X)),$ with $\|B\|_{\infty}=\sup_{t\in \mathbb{R_{+}}}\|B(t)\|<\infty$. The following statements are equivalent for the system (\ref{unb}):
\begin{enumerate}
\item[(i)] (\ref{unb}) is ISS.
\item[(ii)] (\ref{unb}) is 0-UGAS.
\item[(iii)] (\ref{unb}) is iISS.
\item[(iv)] The evolution operator $\{W(t,t_0)\}_{t\geq t_0 \geq 0}$ is uniformly asymptotically stable.
\item[(v)] The evolution operator $\{W(t,t_0)\}_{t\geq t_0 \geq 0}$ is uniformly exponentially stable.
\item[(vi)] The function $V:\mathbb{R}_{+} \times X \rightarrow \mathbb{R}_{+},$ defined by
\begin{eqnarray}\label{rrrrhnlkfn}
V(t,x)=\int_{t}^{\infty}\|W(\tau,t)x\|_{X}^{2}d\tau,
\label{eq:non-coercive ISS-LF-linsys}
\end{eqnarray}
is \emph{a non-coercive ISS Lyapunov function} for (\ref{unb}) which is
locally Lipschitz continuous. 
\end{enumerate}
If any of the above equivalent properties hold, then for all $t\geq0,$ all $x\in X,$ all $u\in\mathcal{U},$ and all $\eta>0$ it holds that 
\begin{equation}\label{rhmnhlh}
\dot{V}_{u}(t,x)\leq-\|x\|_{X}^{2}+\frac{\eta k^{2}}{2w}\|x\|_{X}^{2}+\frac{k^{2}}{2\eta w}\|B\|_{\infty}\|u(t)\|_{U}^{2},
\end{equation}
where $k,w>0$ are obtained from (v) and are chosen such that
\begin{equation}\label{rhklet}
\|W(t,t_0)\|\leq k e^{-w(t-t_0)} \quad \forall t_0\geq0 \quad t\geq t_0.
\end{equation} 
\end{thm}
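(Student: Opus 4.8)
The plan is to establish the equivalences (ii)$\Leftrightarrow$(iv), (iv)$\Leftrightarrow$(v), and (v)$\Rightarrow$(vi)$\Rightarrow$(i)$\Rightarrow$(ii), together with (i)$\Rightarrow$(iii)$\Rightarrow$(ii), closing the cycle. The equivalences (ii)$\Leftrightarrow$(iv)$\Leftrightarrow$(v) are essentially Lemma~\ref{lemmmcc1} applied to the undisturbed system \eqref{linear}: 0-UGAS of \eqref{unb} with zero input is exactly uniform asymptotic stability of $\{W(t,t_0)\}$ (after absorbing the $\|x_0\|_X$ factor into a $\mathcal{KL}$ bound, which works since the system is linear and homogeneous), and Lemma~\ref{lemmmcc1} gives uniform exponential stability. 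For (i)$\Rightarrow$(ii) one substitutes $u\equiv 0$ into the ISS estimate, as already noted in the text after Definition~2.7. For (i)$\Rightarrow$(iii)$\Rightarrow$(ii): (iii)$\Rightarrow$(ii) again follows by substituting $u\equiv 0$, and (i)$\Rightarrow$(iii) for \emph{linear} systems with bounded input operator follows from the representation \eqref{b5484}, the exponential bound \eqref{rhklet}, and the estimate $\|\int_{t_0}^t W(t,\tau)B(\tau)u(\tau)d\tau\|_X \le k\|B\|_\infty \int_{t_0}^t e^{-w(t-\tau)}\|u(\tau)\|_U d\tau \le k\|B\|_\infty \int_{t_0}^t \|u(\tau)\|_U d\tau$, which is of the iISS form with $\alpha=\mathrm{Id}$, $\mu(r)=k\|B\|_\infty r$.

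The heart of the argument is (v)$\Rightarrow$(vi). Assuming \eqref{rhklet}, the integral defining $V(t,x)$ in \eqref{eq:non-coercive ISS-LF-linsys} converges, since $\|W(\tau,t)x\|_X^2 \le k^2 e^{-2w(\tau-t)}\|x\|_X^2$, giving the upper bound $V(t,x)\le \frac{k^2}{2w}\|x\|_X^2 =: \alpha_2(\|x\|_X)$; together with positivity of the integrand for $x\neq 0$ this yields \eqref{R1EN}. Local Lipschitz continuity of $V$ in $x$ follows from $|\,\|W(\tau,t)x\|_X^2 - \|W(\tau,t)y\|_X^2\,| \le \|W(\tau,t)(x-y)\|_X\,(\|W(\tau,t)x\|_X+\|W(\tau,t)y\|_X)$, estimated via \eqref{rhklet} and integrated; joint continuity in $(t,x)$ uses strong continuity of $W$ plus dominated convergence. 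The key computation is the Lie derivative along trajectories. For the flow \eqref{b5484}, using the evolution-family identity $W(\tau,t+h)W(t+h,t)=W(\tau,t)$ one gets for the unforced part
\begin{equation*}
V(t+h,W(t+h,t)x) = \int_{t+h}^\infty \|W(\tau,t)x\|_X^2\,d\tau = V(t,x) - \int_t^{t+h}\|W(\tau,t)x\|_X^2\,d\tau,
\end{equation*}
so that the unforced term contributes $-\lim_{h\to0^+}\frac1h\int_t^{t+h}\|W(\tau,t)x\|_X^2 d\tau = -\|x\|_X^2$ (the integrand is continuous at $\tau=t$ with value $\|x\|_X^2$). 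The forced contribution is handled by the splitting $\phi(t+h,t,x,u) = W(t+h,t)x + \int_t^{t+h}W(t+h,s)B(s)u(s)ds$, a first-order expansion of $V$ (using its local Lipschitz property and the computed directional behaviour), and Lemma~\ref{hnlkfrankm}, which identifies $\lim_{h\to0^+}\frac1h\int_t^{t+h}W(t,s)B(s)u(s)ds = B(t)u(t)$; the cross term then produces something bounded by $2\big(\int_t^\infty \|W(\tau,t)x\|_X\|W(\tau,t)B(t)u(t)\|_X d\tau\big)$-type quantities, controlled by \eqref{rhklet} as $\le 2\cdot\frac{k^2}{2w}\|x\|_X\|B\|_\infty\|u(t)\|_U = \frac{k^2}{w}\|x\|_X\|B\|_\infty\|u(t)\|_U$. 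Applying Young's inequality $ab\le \frac{\eta}{2}a^2 + \frac{1}{2\eta}b^2$ to this cross term yields exactly \eqref{rhmnhlh}. Choosing $\eta$ small enough that $\frac{\eta k^2}{2w}<1$ shows the dissipation inequality \eqref{R2E} holds with $\eta(r)=(1-\tfrac{\eta k^2}{2w})r^2$ (abuse of notation) and $\chi(r)=\frac{k^2}{2\eta w}\|B\|_\infty r^2$; since $\eta(\cdot)$ is unbounded, condition \eqref{R1Ev} is met, so $V$ is a non-coercive ISS Lyapunov function in dissipative form. Then (vi)$\Rightarrow$(i) follows: the dissipative-form inequality combined with Corollary~\ref{cor1} gives, along trajectories, $V(t,\phi(t,t_0,x_0,u)) \le \hat\beta(V(t_0,x_0),t-t_0) + 2\int_{t_0}^t \chi(\|u(s)\|_U)ds$; but here we actually want the ISS (not merely iISS) estimate, which one obtains by instead using $\|u\|_{\mathcal U}$ in place of the integral — more directly, one invokes the argument of \cite[Theorem 3.2]{damak2021input} to pass from the dissipative form satisfying \eqref{R1Ev} to an ISS Lyapunov function in implication form, and concludes ISS via Theorem~\ref{hjMMMMMljg}. (Here the non-coercivity of $V$ requires the version of Theorem~\ref{hjMMMMMljg} for non-coercive Lyapunov functions; since $V$ is bounded above by $\alpha_2(\|x\|_X)$ and the system is BIC by Proposition~\ref{propooo1}, this applies — or one appeals to the linear structure directly.)

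The main obstacle I anticipate is making the Lie-derivative computation for the forced system rigorous, specifically justifying the first-order expansion $V(t+h,\phi(t+h,t,x,u)) = V(t+h, W(t+h,t)x) + \langle \text{``}\nabla_x V\text{''}, \int_t^{t+h}W(t+h,s)B(s)u(s)ds\rangle + o(h)$ when $V$ is only locally Lipschitz (not Fréchet differentiable) on a general Banach space. The clean way around this is to avoid any gradient and instead directly estimate
\begin{equation*}
\Big| V\big(t+h,\phi(t+h,t,x,u)\big) - V\big(t+h, W(t+h,t)x\big) \Big| \le L(r)\,\Big\| \int_t^{t+h}W(t+h,s)B(s)u(s)\,ds\Big\|_X
\end{equation*}
is too crude (it only gives $O(h)$, not a usable coefficient), so one must instead expand $V(t+h,a+b)$ for $b$ small using the explicit quadratic-like structure $V(t,x)=\int_t^\infty\|W(\tau,t)x\|_X^2 d\tau$: write $\|W(\tau,t+h)(a+b)\|_X^2 = \|W(\tau,t+h)a\|_X^2 + 2\,\mathrm{Re}\langle\!\langle W(\tau,t+h)a, W(\tau,t+h)b\rangle\!\rangle + \|W(\tau,t+h)b\|_X^2$ — but on a Banach space there is no inner product. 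The honest fix is to work out the bound using the semi-inner product (duality mapping) or, more simply, to bound the increment of $V$ in the $b$-direction by $2\int_t^\infty \|W(\tau,t)a\|_X\|W(\tau,t)b\|_X\,d\tau + \int_t^\infty\|W(\tau,t)b\|_X^2 d\tau$ via $|\,\|p\|^2-\|q\|^2\,|\le \|p-q\|(\|p\|+\|q\|)$, then note $b=\int_t^{t+h}W(t+h,s)B(s)u(s)ds$ satisfies $\|W(\tau,t)b\|_X \le W(\tau,t+h)$-bounds... — careful bookkeeping with \eqref{rhklet} and Lemma~\ref{hnlkfrankm} turns the first integral into $\frac{k^2}{w}\|x\|_X\|B\|_\infty\|u(t)\|_U\cdot h + o(h)$ and the second into $o(h)$. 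This estimation, and not the structure of the proof, is where the real work lies.
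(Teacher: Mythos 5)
Your treatment of the main implication (v)$\Rightarrow$(vi) matches the paper's proof: the same splitting of the Lie derivative into an unforced term (giving $-\|x\|_X^2$) and a cross/quadratic term, the same use of Lemma~\ref{hnlkfrankm} plus dominated convergence, and Young's inequality at the end. The difficulty you flag about expanding $\|a+b\|_X^2$ without an inner product is exactly resolved the way you eventually propose: the paper estimates $\|W(\tau,t)x+b_h\|_X^2\leq \|W(\tau,t)x\|_X^2+\|b_h\|_X^2+2\|W(\tau,t)x\|_X\|b_h\|_X$ directly under the integral, shows the $\|b_h\|_X^2$-term is $o(h)$, and identifies the cross term in the limit via Lemma~\ref{hnlkfrankm}. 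So the "real work" you anticipate is handled by elementary norm inequalities, no duality mapping needed.

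The genuine gap is in how you close the equivalence cycle, namely the step (vi)$\Rightarrow$(i). You propose to pass from the dissipative form to the implication form and then invoke Theorem~\ref{hjMMMMMljg} -- but both Theorem~\ref{hjMMMMMljg} and Theorem~\ref{hjljg} require a \emph{coercive} Lyapunov function, and the $V$ of \eqref{eq:non-coercive ISS-LF-linsys} admits no lower bound $\alpha_1(\|x\|_X)\leq V(t,x)$ in general. Your parenthetical justification ("since $V$ is bounded above by $\alpha_2$ and the system is BIC, this applies") is not sufficient: non-coercive direct ISS Lyapunov theorems in the literature require additional structural hypotheses (e.g., uniform global stability of the undisturbed system or bounded reachability sets), and no non-coercive version of Theorem~\ref{hjMMMMMljg} is proved in this paper. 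Without (vi)$\Rightarrow$(i), your chain gives (ii)$\Leftrightarrow$(iv)$\Leftrightarrow$(v)$\Rightarrow$(vi) and (i)$\Rightarrow$(ii), but nothing brings you back from (v) or (vi) to (i). The paper closes the loop differently: (vi)$\Rightarrow$(v) follows from Datko's theorem (the finiteness of $\int_t^\infty\|W(\tau,t)x\|_X^2\,d\tau$ already forces uniform exponential stability), and the return from (v)/(ii) to (i) is delegated to the cited equivalence in \cite[Theorem 4.1]{damak2021input}; alternatively, (v)$\Rightarrow$(i) follows in one line from the mild-solution formula \eqref{b5484}, since $\|\phi(t,t_0,x_0,u)\|_X\leq ke^{-w(t-t_0)}\|x_0\|_X+\tfrac{k}{w}\|B\|_\infty\|u\|_{\mathcal U}$. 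Either of these repairs your argument; as written, the cycle does not close.
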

\begin{proof}
(i) $\Longleftrightarrow$ (ii) $\Longrightarrow$ (iii). This follows from
\cite[Theorem 4.1]{damak2021input}. From that result we use for that
ISS implies the so-called $L^1$-ISS property to conclude (iii) from (i), and (iii) $\Longrightarrow$ (ii) is  evident.
(ii) $\Longrightarrow$ (iv) follows directly from the definition of
0-UGAS. The equivalence of (iv) and (v) is contained in
Lemma~\ref{lemmmcc1}. That (vi) implies (v) follows from \cite[Theorem 1]{datko1972uniform}.

(v) $\Longrightarrow$ (vi). 
Let the evolution operator $\{W(t,t_0)\}_{t\geq t_0 \geq 0}$ be uniformly exponentially stable, i.e, there exist $k,w>0$ such that (\ref{rhklet}) holds.
Consider $V:\mathbb{R}_{+} \times X \rightarrow \mathbb{R}_{+},$ as defined in (\ref{eq:non-coercive ISS-LF-linsys}). 
We have for all $t\geq0$ and all $x\in X$ 
\begin{align*}
V(t,x)&\leq\int_{t}^{\infty}\|W(\tau,t)\|^{2}\|x\|^{2}_{X}d\tau \leq\frac{k^{2}}{2w}\|x\|^{2}_{X}.
\end{align*}
If $V(t,x)=0$, then by strong continuity, $\|W(\tau,t)x\|_{X}=0$ for all
$\tau\geq t$ and thus $x=0$ and hence the noncoercive condition \eqref{R1EN} holds.  
Next fix $x\in X,$ $t\geq0$, $u\in\mathcal{U}$, and estimate the
Lie derivative of $V$ as  
\begin{align*}
\dot{V}_{u}(t,x)&=\displaystyle\limsup _{h \to 0^{+}}\frac{1}{h}\big{(}V(t+h,\phi(t+h,t,x,u))-V(t,x)\big{)}\\
&=\displaystyle\limsup _{h \to 0^{+}}\frac{1}{h}\bigg{(}\int_{t+h}^{\infty}\|W(\tau,t+h)\phi(t+h,t,x,u)\|_{X}^{2}d\tau-\int_{t}^{\infty}\|W(\tau,t)x\|_{X}^{2}d\tau\bigg{)}.
\end{align*}
Using \eqref{bvbbLL4} we have for $\tau \geq t+h$
\begin{align*}
W(\tau,t+h)\phi(t+h,t,x,u) 
&= W(\tau,t+h)\Big(W(t+h,t)x_0+\int_{t}^{t+h}W(t+h,s)B(s)u(s)ds\Big)\\
&= W(\tau,t)x_0+\int_{t}^{t+h}W(\tau,s)B(s)u(s)ds.
\end{align*}
We obtain
\begin{align*}
\dot{V}_{u}(t,x)&=\displaystyle\limsup _{h \to 0^{+}}\frac{1}{h}\bigg{(}\int_{t+h}^{\infty}\Big\|W(\tau,t)x+\int_{t}^{t+h}W(\tau,s)B(s)u(s)ds\Big\|_{X}^{2}d\tau- \int_{t}^{\infty}\|W(\tau,t)x\|_{X}^{2}d\tau\bigg{)}\\
&\leq\displaystyle\limsup _{h \to 0^{+}}\frac{1}{h}\left(\int_{t+h}^{\infty}\|W(\tau,t)x\|_{X}^{2}-\int_{t}^{\infty}\|W(\tau,t)x\|_{X}^{2}d\tau\right) \\
&\ \ \ \ \ \ \ \ + \limsup _{h \to 0^{+}}\left(\int_{t+h}^{\infty}\Big\|\int_{t}^{t+h}W(\tau,s)B(s)u(s)ds\Big\|_{X}^{2}+2\|W(\tau,t)x\|_{X}\Big\|\int_{t}^{t+h}W(\tau,s)B(s)u(s)ds\Big\|_X d\tau\right)\\
&= J_{1}+J_{2},
\end{align*}
where
 \[
J_1=\displaystyle\limsup _{h \to 0^{+}}\frac{1}{h}\Big(\int_{t+h}^{\infty}\|W(\tau,t)x\|_{X}^{2}-\int_{t}^{\infty}\|W(\tau,t)x\|_{X}^{2}d\tau\Big),
\]
 and
$$J_2=\displaystyle\limsup _{h \to 0^{+}}\frac{1}{h}\int_{t+h}^{\infty}\bigg{(}\Big\|\int_{t}^{t+h}W(\tau,s)B(s)u(s)ds\Big\|_{X}^{2} + 2\|W(\tau,t)x\|_{X}\Big\|\int_{t}^{t+h}W(\tau,s)B(s)u(s)ds\Big\|_{X} \bigg{)}d\tau.$$
We have by continuity of $\tau \mapsto \|W(\tau,t)x\|_{X}^{2}$ that
\begin{equation*}
    J_1=\displaystyle\lim_{h \to 0^{+}}\frac{1}{h}\bigg{(}-\int_{t}^{t+h}\|W(\tau,t)x\|_{X}^{2}d\tau\bigg{)}=-\|x\|_{X}^{2}.
\end{equation*}

Now we proceed to $J_2:$
\begin{align*}
J_2&\leq\displaystyle\limsup _{h \to 0^{+}}\int_{t}^{\infty}\frac{1}{h}\Big\|\int_{t}^{t+h}W(\tau,s)B(s)u(s)ds\Big\|_{X}^{2}d\tau+\displaystyle\limsup _{h \to 0^{+}}\int_{t}^{\infty}2\|W(\tau,t)x\|_{X}\Big\|\frac{1}{h}\int_{t}^{t+h}W(\tau,s)B(s)u(s)ds\Big\|_{X}d\tau.
\end{align*}

The limit of the first summand equals zero since
\begin{align*}
\lim_{h \to
  0^{+}}\frac{1}{h}\int_{t+h}^{\infty}\Big\|\int_{t}^{t+h}W(\tau,s)B(s)u(s)ds\Big\|_{X}^{2}d\tau
&\leq k^{2} \|u\|^2_{\mathcal{U}}\|B\|^2_{\infty} \lim_{h \to 0^{+}} \int_{t+h}^{\infty}\frac{1}{h} 
\left(\int_{t}^{t+h} e^{-w (\tau-s)} ds \right)^2
d\tau\\
&= k^{2} \|u\|^2_{\mathcal{U}}\|B\|^2_{\infty} \lim_{h \to 0^{+}}
 \frac{\left( e^{wh}-1\right)^2 }{hw^2} \int_{t+h}^{\infty} e^{-2w(\tau-t)}
   d\tau = 0.  
\end{align*}
To bound the limit of the second summand, note that for each fixed $\tau$
\begin{align*}
2\|W(\tau,t)x\|_{X}\left\|\frac{1}{h}\int_{t}^{t+h}W(\tau,s)B(s)u(s)ds\right\|_{X}
&\leq 2k^{2}e^{-2w(\tau-t)}\|x\|_{X}\|B\|_{\infty}\|u\|_{\mathcal{U}}
\frac{1}{wh}\left(e^{wh}-1\right) .
\end{align*}
Applying the dominated convergence theorem and Lemma \ref{hnlkfrankm}, we obtain 
\begin{align}\label{jhgfyjk}
J_2&=\displaystyle\limsup _{h \to 0^{+}}\int_{t}^{\infty}2\|W(\tau,t)x\|_{X}\Big\|W(\tau,t)\int_{t}^{t+h}\frac{1}{h}W(t,s)B(s)u(s)ds\Big\|_{X}d\tau \notag\\
&=\int_{t}^{\infty}2\|W(\tau,t)x\|_{X}\|W(\tau,t)B(t)u(t)\|_{X}d\tau.
\end{align}
By using Young's inequality we obtain for any $\eta> 0$ that
\begin{equation}\label{nai23mr}
J_2\leq\int_{t}^{\infty}\eta\|W(\tau,t)x\|_{X}^{2}+\frac{1}{\eta}\|W(\tau,t)B(t)u(t)\|_{X}^{2}d\tau.
\end{equation}
Then by (\ref{rhklet}), we have
\begin{equation*}
J_2\leq\frac{\eta k^{2}}{2w}\|x\|^{2}_{X}+\frac{k^{2}}{2\eta
  w}\|B(t)\|^{2}\|u(t)\|^{2}_{U}
\leq\frac{\eta k^{2}}{2w}\|x\|^{2}_{X}+\frac{k^{2}}{2\eta w}\|B\|^{2}_{\infty}\|u(t)\|^{2}_{U}.
\end{equation*}
Therefore, 
$$\dot{V}_{u}(t,x)\leq -\|x\|_{X}^{2} + \frac{\eta k^{2}}{2w}\|x\|^{2}_{X} + \frac{k^{2}}{2\eta w}\|B\|^{2}_{\infty}\|u(t)\|^{2}_{U}.$$
Overall, for all $x\in X,$ all $u\in\mathcal{U},$ all $t\geq0,$ and all $\eta>0$
we obtain that the inequality (\ref{rhmnhlh}) holds. Choosing $\eta <\frac{2w}{k^{2}}$, this shows that $V$ is a non-coercive ISS Lyapunov function in dissipative form for (\ref{unb}). 

For the local Lipschitz continuity of $V,$ pick any $r>0,$ any $x,y\in X$ with $\|x\|_{X},\|y\|_{X}<r$ and any $t\geq0.$ We have: 
\begin{align*}
|V(t,x)-V(t,y)|&=\bigg{|}\int_{t}^{\infty}\|W(\tau,t)x\|_{X}^{2} - \|W(\tau,t)y\|_{X}^{2}d\tau\bigg{|}\\
&\leq\int_{t}^{\infty}\big{|}\|W(\tau,t)x\|_{X}^{2}-\|W(\tau,t)y\|_{X}^{2}\big{|}d\tau\\
&=\int_{t}^{\infty}\bigg{|}\big{(}\|W(\tau,t)x\|_{X}-\|W(\tau,t)y\|_{X}\big{)}\big{(}\|W(\tau,t)x\|_{X}+\|W(\tau,t)y\|_{X}\big{)}\bigg{|}d\tau.
\end{align*}
Since $\big{|}\|a\|_{X}-\|b\|_{X}\big{|}\leq\|a-b\|_{X},$ 
for all $a,b\in X$ and since $\|x\|_{X},\|y\|_{X}<r,$ one has by (\ref{rhklet}) that 
\begin{align*}
|V(t,x)-V(t,y)|&\leq\int_{t}^{\infty}2kr\|W(\tau,t)(x-y)\|_{X}d\tau\\
&\leq2kr\int_{t}^{\infty}ke^{-w(\tau-t)}\|x-y\|_{X}d\tau\\
&\leq\frac{2k^2r}{w}\|x-y\|_{X},
\end{align*}
which shows that $V$ is locally Lipschitz.
\end{proof}

\section{Lyapunov methods for local and integral ISS of semi-linear
  systems}
\label{sec:lyap-meth-local}

We reformulate the system (\ref{R1}) in the following form on a Banach space $X$:
\begin{equation}\label{psiunb}
\left\lbrace
\begin{array}{l}
\dot{x}(t)=A(t)x(t)+B(t)u(t)+\psi(t,x(t),u(t)),\qquad t\geq t_0 \geq 0,\\
x(t_0)=x_0,
\end{array}\right.
\end{equation}
where $\psi:\mathbb{R}_{+} \times X\times U\rightarrow X$ satisfies the
Assumption $(\mathcal{H}_1)$ and $B\in C(\mathbb{R}_{+},L(U,X)),$ with
$\displaystyle \sup_{t\geq0}\|B(t)\|<\infty.$ As in the case of system (\ref{R1}), we assume that inputs belong to the space $\mathcal{U}=PC(\mathbb{R}_+,U).$
In this section, we prove two Lyapunov results. In Theorem \ref{hndrhn23},
we construct a (non-)coercive LISS Lyapunov function for the system
(\ref{psiunb}). In Theorem \ref{hndkrnm}, under a certain condition on $\psi,$ we construct a (non-)coercive iISS Lyapunov function for the system (\ref{psiunb}). 
Since $B$ is continuous and $u$ is piecewise continuous, $\Psi(t,x,u)=B(t)u+\psi(t,x,u)$ satisfies the Assumption $(\mathcal{H}_1)$ and according to Proposition \ref{propooo1}, there is a unique maximal mild solution $\phi(\cdot,t_0,x_0,u)\in C([t_0,t_m],X)$ of system (\ref{unb}) for any data $(t_0,x_0,u),$
where $0<t_m=t_m(t_0,x_0,u)\leq\infty.$

\subsection{Constructions of LISS Lyapunov functions}
\label{subsec:constr-liss-lyap}

In this subsection, we provide a method for the construction of
non-coercive LISS Lyapunov functions for the system (\ref{psiunb}) under a
local linear boundedness assumption.

\begin{description}
\item[$(\mathcal{H}_2)$]  For each $a>0$, there exist $b>0$ and (sufficiently small) $\rho>0$ such that for all $x \in X,$ all $t\geq 0,$ all $u\in \mathcal{U}$ satisfying $\|x\|_{X},\|u\|_{\mathcal{U}}\leq\rho$ it holds that
\begin{equation}
\label{psiiii}
\|\psi(t,x,u)\|_{X} \leq a\|x\|_{X}+b\|u\|_{\Uc}.
\end{equation}
\end{description}

In the following theorem, we outline the conditions for constructing a non-coercive LISS Lyapunov function for the system (\ref{psiunb}).

\begin{thm}\label{hndrhn23}
Let the family $\{A(t)\}_{t\geq0}$ be the generator of a strongly
continuous evolution family $\{W(t,s)\}_{t\geq s\geq0}.$ Assume
$B\in C(\mathbb{R}_{+},L(U,X)),$ with $\|B\|_{\infty}=\sup_{t\in \mathbb{R_{+}}}\|B(t)\|<\infty.$
Consider system
(\ref{psiunb}) and assume that Assumptions $(\mathcal{H}_1)$ and $(\mathcal{H}_2)$ hold. 
If (\ref{unb}) is 0-UGAS, then the function $V:\mathbb{R}_{+} \times X \rightarrow \mathbb{R}_{+},$ defined as in (\ref{rrrrhnlkfn}) by
$$V(t,x)=\int_{t}^{\infty}\|W(\tau,t)x\|_{X}^{2}d\tau,$$
is a non-coercive LISS Lyapunov function for (\ref{psiunb}).
\end{thm}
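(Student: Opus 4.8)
The plan is to check directly that the function $V$ from \eqref{rrrrhnlkfn} satisfies all the requirements of Definition~\ref{DEFINITION2} for a non-coercive LISS Lyapunov function, re-using the computations already performed in the proof of Theorem~\ref{hdkmnmlo}. First, since \eqref{unb} is 0-UGAS, the equivalence of items (ii) and (v) in Theorem~\ref{hdkmnmlo} yields constants $k,w>0$ with $\|W(t,s)\|\leq ke^{-w(t-s)}$ for all $(t,s)\in\mathcal{T}$. Hence $V$ is well defined, $V(t,0)=0$, it is locally Lipschitz continuous, and, exactly as in the proof of Theorem~\ref{hdkmnmlo}, $0<V(t,x)\leq\tfrac{k^2}{2w}\|x\|_X^2$ for all $t\geq0$ and all $x\in X\setminus\{0\}$ (positivity uses strong continuity of $W$). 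Thus \eqref{R1EN} holds with $\alpha_2(r):=\tfrac{k^2}{2w}r^2\in\mathcal{K}_\infty$, and it remains only to establish the implication \eqref{STAvNNv}.

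Fix a constant $a>0$ whose size will be specified below, let $b,\rho>0$ be the constants furnished by Assumption~$(\mathcal{H}_2)$ for this $a$, and put $r_1:=r_2:=\rho$. Pick any $t\geq0$, any $x\in X$ with $\|x\|_X\leq\rho$, and any $u\in\mathcal{U}$ with $\|u\|_{\mathcal{U}}\leq\rho$; by Proposition~\ref{propooo1} the corresponding solution exists on a nontrivial interval to the right of $t$. Writing $\Psi(t,x,v):=B(t)v+\psi(t,x,v)$ and substituting the mild-solution formula \eqref{bvbbLL4} into \eqref{eq:Liederivativedef}, the estimate of $\dot V_u(t,x)$ runs line by line as in the proof of Theorem~\ref{hdkmnmlo}: expanding $\big\|W(\tau,t)x+\int_t^{t+h}W(\tau,s)\Psi(s,\phi(s,t,x,u),u(s))\,ds\big\|_X^2$ gives $\dot V_u(t,x)\leq J_1+J_2$ with $J_1=-\|x\|_X^2$, the term containing the square of the integral vanishes in the limit, and, by the dominated convergence theorem together with joint continuity of $\Psi$ in $(t,u)$, continuity of $s\mapsto\phi(s,t,x,u)$ at $s=t$ and right-continuity of $u$ (an argument analogous to Lemma~\ref{hnlkfrankm}), the cross term becomes
\begin{equation*}
J_2=\int_t^\infty 2\,\|W(\tau,t)x\|_X\,\|W(\tau,t)\Psi(t,x,u(t))\|_X\,d\tau .
\end{equation*}
The bound needed for dominated convergence here comes merely from continuity of $s\mapsto\Psi(s,\phi(s,t,x,u),u(s))$ near $s=t$ and \eqref{rhklet}, so $(\mathcal{H}_2)$ enters only afterwards, at the single point $(t,x,u(t))$, where $\|x\|_X\leq\rho$ and $\|u(t)\|_U\leq\|u\|_{\mathcal{U}}\leq\rho$.

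By $(\mathcal{H}_2)$ we have $\|\Psi(t,x,u(t))\|_X\leq a\|x\|_X+c\|u\|_{\mathcal{U}}$ with $c:=\|B\|_\infty+b$; applying Young's inequality to $J_2$ with a parameter $\eta>0$, the bound $\int_t^\infty\|W(\tau,t)v\|_X^2\,d\tau\leq\tfrac{k^2}{2w}\|v\|_X^2$, the inequality $(a\|x\|_X+c\|u\|_{\mathcal{U}})^2\leq2a^2\|x\|_X^2+2c^2\|u\|_{\mathcal{U}}^2$, and $V(t,x)\leq\tfrac{k^2}{2w}\|x\|_X^2$, we obtain
\begin{equation*}
\dot V_u(t,x)\leq-\Big(1-\tfrac{\eta k^2}{2w}-\tfrac{k^2a^2}{w\eta}\Big)\|x\|_X^2+\tfrac{k^2c^2}{w\eta}\|u\|_{\mathcal{U}}^2 .
\end{equation*}
Now choose $\eta>0$ so small that $\tfrac{\eta k^2}{2w}<\tfrac12$, and then — exploiting that $a$ in $(\mathcal{H}_2)$ may be taken arbitrarily small — fix $a>0$ so small that $\tfrac{k^2a^2}{w\eta}<\tfrac12-\tfrac{\eta k^2}{2w}$; set $\delta:=1-\tfrac{\eta k^2}{2w}-\tfrac{k^2a^2}{w\eta}>0$ and $C:=\tfrac{k^2c^2}{w\eta}$. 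Then $\dot V_u(t,x)\leq-\delta\|x\|_X^2+C\|u\|_{\mathcal{U}}^2$ for all admissible $(t,x,u)$ as above, and with $\kappa(r):=\sqrt{2C/\delta}\,r\in\mathcal{K}_\infty$ the condition $\|x\|_X\geq\kappa(\|u\|_{\mathcal{U}})$ forces $C\|u\|_{\mathcal{U}}^2\leq\tfrac{\delta}{2}\|x\|_X^2$, hence $\dot V_u(t,x)\leq-\tfrac{\delta}{2}\|x\|_X^2\leq-\tfrac{\delta w}{k^2}V(t,x)=:-\mu(V(t,x))$ with the positive definite $\mu(r):=\tfrac{\delta w}{k^2}r$. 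This is precisely \eqref{STAvNNv}, so $V$ is a non-coercive LISS Lyapunov function for \eqref{psiunb}. I expect the main obstacle to be the bookkeeping in the Lie-derivative estimate — in particular the dominated-convergence justification for $J_2$ and the identification of its limit — but this is structurally identical to the corresponding step in the proof of Theorem~\ref{hdkmnmlo}; the genuinely new point is absorbing the $a\|x\|_X$ contribution of $\psi$ into the $-\|x\|_X^2$ term via the smallness of $a$ allowed by $(\mathcal{H}_2)$.
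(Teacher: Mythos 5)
Your proof is correct and follows essentially the same route as the paper: verify the upper bound and positivity of $V$, redo the Lie-derivative computation of Theorem~\ref{hdkmnmlo} with $B(t)u(t)+\psi(t,x,u(t))$ in place of $B(t)u(t)$, invoke $(\mathcal{H}_2)$, and absorb the $a\|x\|_X$ contribution of $\psi$ using the smallness of $a$. The only difference is in the final absorption step: the paper chooses the nonlinear gain $\kappa(r)=\sqrt{r}$, so that $\|u\|_{\mathcal{U}}\leq\|x\|_X^2$ turns the cross term into $\rho\|x\|_X^2$ (absorbed via smallness of $\rho$), whereas you pass through a dissipative-form estimate via Young's inequality and obtain a linear gain — both correctly yield \eqref{STAvNNv}.
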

\begin{proof}
%
%
%
%
Let (\ref{unb}) is 0-UGAS and pick $u\equiv0.$ According to Theorem
\ref{hdkmnmlo}, the evolution family $\{W(t,s)\}_{t\geq s\geq0}$ generated
by $\{A(t)\}_{t\geq0}$ is uniformly exponentially stable, that is there
exist $k,w>0$ such that $\|W(t,t_0)\|\leq ke^{-\omega(t-t_0)} $ holds for
all $t\geq t_0 \geq 0$.

Pick $a,b,\rho>0$ as in Assumption $(\mathcal{H}_2)$. For all $x \in X,$
all $u\in \Uc$ with $\|x\|_{X}, \|u\|_{\mathcal{U}}\leq\rho$, we apply a
similar analysis as in the proof of Theorem \ref{hdkmnmlo}, with the distinction that instead of $B(t)u(t)$, we consider $B(t)u(t) + \psi(t,x(t),u(t))$ in (\ref{jhgfyjk}).
Then, we obtain that 
\begin{align*}
\dot{V}_{u}(t,x)&\leq -\|x\|_{X}^{2}+\int_{t}^{\infty}2\|W(\tau,t)x\|\Big\|W(\tau,t)\Big(B(t)u(t)+\psi(t,x,u(t))\Big)\Big\|_{X}d\tau\\
&\leq -\|x\|_{X}^{2}+2\int_{t}^{\infty}\|W(\tau,t)\|^{2}\|x\|_{X}\|B(t)u(t)+\psi(t,x,u(t))\|_{X}d\tau\\
&\leq -\|x\|_{X}^{2}+\frac{k^{2}}{w}\|x\|_{X}\big{(}\|B(t)\|\|u(t)\|_{U}+\|\psi(t,x,u(t))\|_{X}\big{)}.
\end{align*}
 We continue the above estimates: 
\begin{align*}
\dot{V}_{u}(t,x)
&\leq -\|x\|_{X}^{2}+\frac{k^{2}}{w}\|x\|_{X}\bigg{(}\|B\|_{\infty}\|u(t)\|_{U}+a\|x\|_{X}+b\|u(t)\|_{U}\bigg{)}\\
&\leq\big{(} -1+\frac{k^{2}}{w}a\big{)}\|x\|_{X}^{2}+\frac{k^{2}}{w}\|x\|_{X}\big{(}\|B\|_{\infty}+b\big{)}\|u\|_{\mathcal{U}},
\end{align*}
where $\|B\|_{\infty}=\sup_{t\geq0}\|B(t)\|.$
Define $\kappa\in\mathcal{K} $ by $\kappa(r)=\sqrt{r},$ $r\geq0$. 
Whenever $\|u\|_{\mathcal{U}}\leq \kappa^{-1}(\|x\|_{X})=\|x\|^{2}_{X},$ we obtain
\begin{align*}
\dot{V}_{u}(t,x)
&\leq\big{(} -1+\frac{k^{2}}{w}a\big{)}\|x\|_{X}^{2}+\frac{k^{2}}{w}\big{(}\|B\|_{\infty}+b\big{)}\|x\|_{X}^{3}\\
&\leq\big{(} -1+\frac{k^{2}}{w}a\big{)}\|x\|_{X}^{2}+\frac{k^{2}}{w}\rho\big{(}\|B\|_{\infty}+b\big{)}\|x\|_{X}^{2}.
\end{align*}
Since $a$ and $\rho$ can be chosen arbitrarily small, the right-hand side can be estimated from above by some negative quadratic function of $\|x\|_{X}$.
According to Theorem \ref{hjMMMMMljg}, $V$ is a non-coercive LISS Lyapunov function for (\ref{psiunb}).


\end{proof}
\subsection{Constructions of iISS Lyapunov functions}
\label{subsec:constr-iiss-lyap}

Now, we turn our attention to time-varying bilinear systems of the form (\ref{psiunb}). 
In \cite{MiI16}, the equivalence between uniform global asymptotic stability and iISS was shown for bilinear time-invariant infinite-dimensional control systems. Additionally, in \cite{mironchenko2015note} a method for construction of non-coercive iISS Lyapunov functions was presented when the state is a Banach space. It was further extended in \cite{damak2021input} to bilinear time-varying infinite-dimensional control systems, where the operators in the family $\{A(t)\}_{t\geq0}$ are bounded. We will generalize these results to time-varying bilinear infinite-dimensional control systems where the operators in the family $\{A(t)\}_{t\geq0}$ are unbounded. Together with the results from \cite[Proposition 5]{mironchenko2015note}, we establish Lyapunov characterization of iISS for (\ref{psiunb}). For this purpose, we impose the following assumption
\begin{description}
\item[$(\mathcal{H}_3)$]  There exist $\gamma>0$ and $\delta\in \mathcal{K},$ so that for all $x\in X$, all $u\in \mathcal{U},$ and all $t\geq 0,$ we have
\begin{equation}\label{ibiISS}
\|\psi(t,x,u)\|_{X}\leq\gamma\|x\|_{X}\delta(\|u\|_{\mathcal{U}}).
\end{equation}
\end{description}

\begin{thm}\label{hndkrnm}
Let the family $\{A(t)\}_{t\geq0}$ be the generator of a strongly
continuous evolution family $\{W(t,s)\}_{t\geq s\geq0}.$ Assume
$B\in C(\mathbb{R}_{+},L(U,X)),$ with $\|B\|_{\infty}=\sup_{t\in \mathbb{R_{+}}}\|B(t)\|<\infty.$
Consider system
(\ref{psiunb}) and
let Assumptions $(\mathcal{H}_1)$ and $(\mathcal{H}_3)$ hold.
Assume that the evolution operator $\{W(t,s)\}_{t\geq s\geq0}$ is uniformly exponentially stable and 
let $V$ be defined as in (\ref{rrrrhnlkfn}). 
If $\{W(t,s)\}_{t\geq s\geq0}$ satisfies
\begin{equation}
\label{nmin23}
\|W(t,s)x\|_{X}\geq Me^{-\lambda(t-s)}\|x\|_{X},\qquad x \in X,\quad t \ge s\ge 0,
\end{equation}
then
\begin{equation}\label{rhnkdrnhp}
Z(t,x):=\ln(1+V(t,x)),\qquad x \in X,\quad t\geq0,
\end{equation}
is a coercive iISS Lyapunov function for (\ref{psiunb}). 
In particular,  (\ref{psiunb}) is iISS.
\end{thm}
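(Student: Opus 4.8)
The plan is to verify that the function $Z(t,x)=\ln(1+V(t,x))$ from \eqref{rhnkdrnhp} is a coercive iISS Lyapunov function in dissipative form for \eqref{psiunb}, and then to conclude via Theorem~\ref{hjljg}. The first step is to upgrade the one-sided bound on $V$ from Theorem~\ref{hdkmnmlo} to a two-sided quadratic sandwich. The upper bound $V(t,x)\le\frac{k^{2}}{2w}\|x\|_{X}^{2}$ (with $k,w$ from uniform exponential stability) is already contained in the proof of Theorem~\ref{hdkmnmlo}. For the lower bound I would integrate \eqref{nmin23}:
\[
V(t,x)=\int_{t}^{\infty}\|W(\tau,t)x\|_{X}^{2}\,d\tau\;\ge\;\Big(\int_{t}^{\infty}M^{2}e^{-2\lambda(\tau-t)}\,d\tau\Big)\|x\|_{X}^{2}\;=\;\frac{M^{2}}{2\lambda}\|x\|_{X}^{2},
\]
where necessarily $\lambda>0$, since otherwise the integral would diverge, contradicting the finite upper bound. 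Writing $\underline{c}=\frac{M^{2}}{2\lambda}$ and $\bar{c}=\frac{k^{2}}{2w}$, composition with $s\mapsto\ln(1+s)$ gives $\alpha_{1}(\|x\|_{X})\le Z(t,x)\le\alpha_{2}(\|x\|_{X})$ with $\alpha_{1}(r)=\ln(1+\underline{c}\,r^{2})$ and $\alpha_{2}(r)=\ln(1+\bar{c}\,r^{2})$, both in $\mathcal{K}_{\infty}$; together with continuity of $V$ (established in Theorem~\ref{hdkmnmlo}), $Z(t,0)=0$, and $Z(t,x)>0$ for $x\ne 0$, this yields \eqref{R1EN} and \eqref{R1E} for $Z$.

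Next I would compute $\dot V_{u}(t,x)$ along trajectories of \eqref{psiunb}. This is the same $\limsup$-under-the-integral computation as in the proof of Theorem~\ref{hdkmnmlo}, but with $B(t)u(t)+\psi(t,x(t),u(t))$ in place of $B(t)u(t)$, exactly the substitution already carried out in the proof of Theorem~\ref{hndrhn23}. Performing the $\tau$-integration with the help of \eqref{rhklet} and then invoking $(\mathcal{H}_{3})$, I expect to arrive at
\[
\dot V_{u}(t,x)\;\le\;-\|x\|_{X}^{2}+\frac{k^{2}}{w}\|B\|_{\infty}\|x\|_{X}\|u(t)\|_{U}+\frac{k^{2}\gamma}{w}\|x\|_{X}^{2}\,\delta(\|u(t)\|_{U})
\]
for all $t\ge 0$, $x\in X$ and $u\in\mathcal{U}$.

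To pass from $V$ to $Z$, note that $s\mapsto\ln(1+s)$ is $C^{1}$ and strictly increasing and that $t\mapsto V(t,\phi(t,t_{0},x_{0},u))$ is continuous, so the chain rule for the upper right Dini derivative gives $\dot Z_{u}(t,x)=\frac{1}{1+V(t,x)}\dot V_{u}(t,x)$. Dividing the previous bound by $1+V(t,x)$ and using $\underline{c}\|x\|_{X}^{2}\le V(t,x)\le\bar{c}\|x\|_{X}^{2}$, the decisive elementary estimates are $\frac{\|x\|_{X}^{2}}{1+\underline{c}\|x\|_{X}^{2}}\le\frac{1}{\underline{c}}$ and $\frac{\|x\|_{X}}{1+\underline{c}\|x\|_{X}^{2}}\le\frac{1}{2\sqrt{\underline{c}}}$, which strip the $\|x\|_{X}$-factors off the two input terms, while $-\frac{\|x\|_{X}^{2}}{1+V(t,x)}\le-\frac{\|x\|_{X}^{2}}{1+\bar{c}\|x\|_{X}^{2}}$. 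Hence
\[
\dot Z_{u}(t,x)\;\le\;-\frac{\|x\|_{X}^{2}}{1+\bar{c}\|x\|_{X}^{2}}+\frac{k^{2}\|B\|_{\infty}}{2w\sqrt{\underline{c}}}\|u(t)\|_{U}+\frac{k^{2}\gamma}{w\underline{c}}\,\delta(\|u(t)\|_{U})\;=:\;-\eta(\|x\|_{X})+\chi(\|u(t)\|_{U}),
\]
with $\eta\in\mathcal{P}$ and $\chi\in\mathcal{K}$, i.e.\ \eqref{R2E} holds. Combined with the sandwich bounds from the first step, this shows that $Z$ is a coercive iISS Lyapunov function for \eqref{psiunb}. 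Finally, since $\Psi=Bu+\psi$ satisfies $(\mathcal{H}_{1})$, Proposition~\ref{propooo1} ensures that \eqref{psiunb} is a well-posed control system with the BIC property, and Theorem~\ref{hjljg} then gives that \eqref{psiunb} is iISS.

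The step I expect to be the main obstacle is the derivative computation: rerunning the limit manipulations of Theorem~\ref{hdkmnmlo} in the presence of $\psi$ requires re-justifying the interchange of $\limsup$ and $\int_{t}^{\infty}(\cdot)\,d\tau$ (via dominated convergence and Lemma~\ref{hnlkfrankm}), and one has to check that the bilinear-type growth $(\mathcal{H}_{3})$ — which produces an extra factor $\|x\|_{X}$ in the input term — is harmless precisely because that factor is what the denominator $1+V(t,x)$ absorbs in the third step. This absorption is exactly why $Z=\ln(1+V)$, rather than $V$ itself, is the right Lyapunov candidate for iISS, in the spirit of the construction in \cite{mironchenko2015note}.
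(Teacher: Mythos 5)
Your proof is correct and follows essentially the same route as the paper's: the same sandwich bounds $\tfrac{M^{2}}{2\lambda}\|x\|_{X}^{2}\le V(t,x)\le\tfrac{k^{2}}{2w}\|x\|_{X}^{2}$ (you in fact derive the lower constant correctly, where the paper's display contains a typo), the same chain-rule identity $\dot Z_{u}=\dot V_{u}/(1+V)$, the same rerun of the Theorem~\ref{hdkmnmlo} derivative computation with $B(t)u(t)+\psi$ in place of $B(t)u(t)$, and the same absorption of the extra $\|x\|_{X}$ factors by the denominator $1+V$. The only immaterial difference is that you keep the cross terms in Cauchy--Schwarz form, linear in $\|u(t)\|_{U}$ and $\delta(\|u(t)\|_{U})$, whereas the paper first applies Young's inequality and obtains squared input terms; both produce a valid $\chi\in\mathcal{K}$ and a positive definite decay rate.
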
%
\begin{proof}
In view of \eqref{nmin23}, we have the following sandwich bounds:
\begin{align}
\label{eq:Sandwich-bounds-V}
\frac{2\gamma^{2}\lambda}{ M^{2}} \|x\|^{2}_{X} \leq V(t,x)&\leq\int_{t}^{\infty}\|W(\tau,t)\|^{2}\|x\|^{2}_{X}d\tau \leq\frac{k^{2}}{2w}\|x\|^{2}_{X},\qquad x \in X,\quad t\geq 0.
\end{align}

Defining $Z$ as in (\ref{rhnkdrnhp}) yields for any $u\in\mathcal{U}$ and any $t\geq 0$ the estimate:
\begin{eqnarray}\label{hkjkklhjj}
  \dot{Z}_{u}(t,x) &=& \frac{1}{1+V(t,x)} \dot{V}_{u}(t,x).
\end{eqnarray}
Using a similar analysis as in the proof of Theorem \ref{hdkmnmlo}, with the difference that instead of $B(t)u(t)$, we consider $B(t)u(t) + \psi(t,x(t),u(t))$ in (\ref{nai23mr}), we obtain the following estimate for any $\eta > 0$
$$
\dot{V}_{u}(t,x)\leq-\|x\|_{X}^{2}+\int_{t}^{\infty}\eta\|W(\tau,t)x\|_{X}^{2}d\tau+\frac{1}{\eta}\int_{t}^{\infty}\|W(\tau,t)\|_{X}^{2}\|B(t)u(t)+\psi(t,x,u(t)\|_{X}^{2}d\tau.
$$
Since the evolution family $\{W(t,s)\}_{t\geq s\geq0}$ generated by $\{A(t)\}_{t\geq0}$ is uniformly exponentially stable, there exist $k,w>0$ such that $\|W(t,t_0)\|\leq ke^{-\omega(t-t_0)} $ holds. It follows that
$$
\dot{V}_{u}(t,x)\leq\bigg{(}\frac{nk^{2}}{2w}-1\bigg{)}\|x\|_{X}^{2}+\frac{k^{2}}{2\eta w}\|B(t)u(t)+\psi(t,x,u(t)\|_{X}^{2}.
$$
Using the inequality $\|a+b\|_{X}^{2}\leq2\|a\|_{X}^{2}+2\|b\|_{X}^{2},$
for all $a, b\in X,$ we obtain
\begin{align*}
\dot{V}_{u}(t,x)&\leq\bigg{(}\frac{nk^{2}}{2w}-1\bigg{)}\|x\|_{X}^{2}+\frac{k^{2}}{\eta w} \big{(}\|B(t)\|^{2}\|u(t)\|^{2}_{U} + \|\psi(t,x,u(t)\|_{X}^{2} \big{)} \\
&\leq\bigg{(}\frac{nk^{2}}{2w}-1\bigg{)}\|x\|_{X}^{2}+\frac{k^{2}}{\eta w} \Big{(}\|B\|^{2}_{\infty}\|u(t)\|^{2}_{U}+{\gamma}^{2} \|x\|_{X}^{2}\delta^{2}(\|u(t)\|_{U})\Big{)}.
\end{align*}
Now (\ref{hkjkklhjj}) yields
\begin{equation}\label{hkdrharh32}
 \dot{Z}_{u}(t,x)= \frac{1}{1+V(t,x)}\bigg{(}\bigg{(}\frac{nk^{2}}{2w}-1\bigg{)}\|x\|_{X}^{2}+\frac{k^{2}}{\eta w} \Big{(}\|B\|^{2}_{\infty}\|u(t)\|^{2}_{U}+{\gamma}^{2} \|x\|_{X}^{2}\delta^{2}(\|u(t)\|_{U})\Big{)}\bigg{)}.
\end{equation}
Pick $\eta\in(0,\frac{2w}{k^{2}}),$ such that $\frac{nk^{2}}{2w}-1<0$ and due to (\ref{hkdrharh32}), we get
\begin{align*}
\dot{Z}_{u}(t,x)&\leq\bigg{(}\frac{nk^{2}}{2w}-1\bigg{)} \bigg{(} \frac{\|x\|_{X}^{2}}{1+\frac{k^{2}}{2 w}\|x\|_{X}^{2}}\bigg{)}+\frac{k^{2}}{\eta w}\|B\|^{2}_{\infty}\|u(t)\|^{2}_{U}+\gamma^{2}\bigg{(}\frac{\|x\|_{X}^{2}}{1+V(t,x)}\bigg{)}\delta^{2}(\|u(t)\|_{U})
\end{align*}
Using sandwich bounds \eqref{eq:Sandwich-bounds-V}, we have
\begin{align*}
\dot{Z}_{u}(t,x)&\leq\bigg{(}\frac{nk^{2}}{2w}-1\bigg{)} \bigg{(} \frac{\|x\|_{X}^{2}}{1+\frac{k^{2}}{2 w}\|x\|_{X}^{2}}\bigg{)}+\frac{k^{2}}{\eta w}\|B\|^{2}_{\infty}\|u(t)\|^{2}_{U}+\gamma^{2}\frac{\|x\|_{X}^{2}}{1+\frac{M^{2}}{2\lambda}\|x\|_{X}^{2}}\delta^{2}(\|u(t)\|_{U})\\
&\leq\bigg{(}\frac{nk^{2}}{2w}-1\bigg{)} \bigg{(} \frac{\|x\|_{X}^{2}}{1+\frac{k^{2}}{2 w}\|x\|_{X}^{2}}\bigg{)}+\frac{k^{2}}{\eta w}\|B\|^{2}_{\infty}\|u(t)\|^{2}_{U}+\frac{2\gamma^{2}\lambda}{ M^{2}}\delta^{2}(\|u(t)\|_{U}).
\end{align*}
This establishes the dissipativity inequality for $V$. In view of \eqref{eq:Sandwich-bounds-V}, the map $Z$ satisfies the sandwich bounds. 
Hence, $Z$ is a coercive iISS Lyapunov function for (\ref{psiunb}).
According to Theorem \ref{hjljg}, (\ref{psiunb}) is iISS.
\end{proof}

\section{Examples}
\label{sec:examples}

%
%
%
%

In this section, we will analyze two examples showing the applicability of our methods.

\subsection{Kuramoto-Sivashinskiy equation}


Let $X=L^{2}(0,1)$. Consider
the controlled time-varying Kuramoto-Sivashinsky (KS) equation
\begin{equation}\label{1EX}
\displaystyle\frac{\partial x(z,t)}{\partial t}=-\displaystyle\frac{\partial^{4} x(z,t)}{\partial z^{4}}-\varrho \displaystyle\frac{\partial^{2} x(z,t)}{\partial z^{2}}-\mu(t)x(z,t)+\frac{x(z,t)|\sin(t)|}{1+e^{-z t}x^{2}(z,t)}u(z,t),\quad z\in(0,1),\ t\geq t_0>0,\\
\end{equation}
where $t_0$ is the initial time, $\varrho > 0$ is known as the anti-diffusion parameter, $\mu:\mathbb{R}_+\to \mathbb{R}_+$ is a H\"older continuous function, and $u\in\Uc=PC( \mathbb{R}_+, \mathbb{R})$.

Consider (\ref{1EX}) with homogenous Dirichlet boundary conditions:
\begin{equation}\label{R1jjd}
x(0,t)=\displaystyle\frac{\partial x}{\partial z}(0,t)=x(1,t)=\displaystyle\frac{\partial x}{\partial z}(1,t)=0,\quad t\geq t_0>0.
\end{equation}
The system (\ref{1EX}) can be represented in the form $\dot{x}=Bx+\Psi(t,x,u),$ where 
$$
B:=-\displaystyle\frac{\partial^{4}}{\partial z^{4}}-\varrho \displaystyle\frac{\partial^{2}}{\partial z^{2}}, \quad \mbox{ with domain } \quad D(B)=H^{4}(0,1)\cap H_0^{2}(0,1),
$$
and
$$
\Psi(t,x(z,t),u(z,t)):= -\mu(t)x(z,t)+\frac{x(z,t)|\sin(t)|}{1+e^{-z t}x^{2}(z,t)}u(z,t).
$$

It is well known from \cite{cerpa2010null,cerpa2011local,liu2001stability} that $B$ is the infinitesimal generator of 
an analytic semigroup on $L^{2}(0,1)$ that we denote by $S(t)$. 

From the assumptions it follows that $\Psi$ satisfies the assumptions of
Proposition~\ref{propooo1}, and this proposition ensures that (\ref{1EX}) 
gives rise to a control system $\Sigma=(X,\mathcal{U},\phi),$ as in Definition~\ref{csyol} where $\phi$ is the mild solution of (\ref{1EX}). 

If in addition to $\mu$, also $u$ is H\"older continuous, then \cite[Theorem 3.3]{Hen81} (with $\alpha = 0$) ensures that the system (\ref{1EX}) has a unique maximal mild solution $\phi(\cdot,0,x_0,u)$ and this mild solution is even a 
\emph{classical solution} satisfying the integral equation (\ref{bvbbLL4}). 


\begin{prop}
\label{prop:KS-equation-iISS} 
If $\varrho <4\pi^{2},$ then (\ref{1EX}), (\ref{R1jjd}) is iISS.
\end{prop}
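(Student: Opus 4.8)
The plan is to construct one coercive iISS Lyapunov function for the control system $\Sigma=(X,\mathcal{U},\phi)$ generated by \eqref{1EX}, \eqref{R1jjd} (with $X=L^{2}(0,1)$, $\mathcal{U}=PC(\mathbb{R}_{+},\mathbb{R})$) and then to apply Theorem~\ref{hjljg}. Since the input enters \eqref{1EX} multiplicatively, the quadratic $V(x):=\|x\|_{L^{2}}^{2}$ will only satisfy a bilinear dissipation estimate of the form $\dot{V}_{u}(t,x)\le(-c+c'|u(t)|)\,V(x)$, which is not yet of iISS type; I would therefore pass to $Z(x):=\ln\!\bigl(1+V(x)\bigr)$, which is sandwiched on both sides by the $\mathcal{K}_{\infty}$-function $r\mapsto\ln(1+r^{2})$ and whose logarithmic weight absorbs exactly this bilinear obstruction.

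The decisive step is to show that $B=-\partial_{z}^{4}-\varrho\,\partial_{z}^{2}$ on $D(B)=H^{4}(0,1)\cap H_{0}^{2}(0,1)$ is strictly dissipative precisely when $\varrho<4\pi^{2}$. Integration by parts against the clamped boundary conditions \eqref{R1jjd} gives $\langle Bx,x\rangle=-\|x''\|_{L^{2}}^{2}+\varrho\,\|x'\|_{L^{2}}^{2}$ for $x\in D(B)$. The crucial point is that for $x\in H_{0}^{2}(0,1)$ the derivative $x'$ lies in $H_{0}^{1}(0,1)$ \emph{and} has zero mean on $(0,1)$, since $\int_{0}^{1}x'\,dz=x(1)-x(0)=0$; hence the Poincar\'e inequality on the mean-zero subspace of $H_{0}^{1}(0,1)$, whose sharp constant is $4\pi^{2}$ (attained at $z\mapsto\sin(2\pi z)$), yields $\|x''\|_{L^{2}}^{2}\ge 4\pi^{2}\|x'\|_{L^{2}}^{2}$. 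Together with the standard bound $\|x'\|_{L^{2}}^{2}\ge\pi^{2}\|x\|_{L^{2}}^{2}$ this gives
\begin{equation*}
\langle Bx,x\rangle\ \le\ -\Bigl(1-\tfrac{\varrho}{4\pi^{2}}\Bigr)\|x''\|_{L^{2}}^{2}\ \le\ -\omega\,\|x\|_{L^{2}}^{2},\qquad \omega:=\pi^{2}\bigl(4\pi^{2}-\varrho\bigr)>0,
\end{equation*}
so the analytic semigroup $S(\cdot)$ generated by $B$ is a contraction with $\|S(t)\|\le e^{-\omega t}$ for $t\ge0$. I expect this sharp mean-zero Poincar\'e estimate --- equivalently $\inf\{\|x''\|_{L^{2}}^{2}/\|x'\|_{L^{2}}^{2}:x\in H_{0}^{2}(0,1)\setminus\{0\}\}=4\pi^{2}$ --- to be the only genuinely non-routine ingredient, since it is exactly what fixes the threshold at $4\pi^{2}$: one has to verify that, under the zero-mean constraint, no combination of odd Fourier--sine modes can underbid the single even mode $\sin(2\pi z)$, which follows from a short Cauchy--Schwarz argument on the constrained coefficients (using $\sum_{k\ge1}(2k+1)^{-2}=\tfrac{\pi^{2}}{8}-1$).

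Next I would evaluate the Lie derivative $\dot{V}_{u}(t,x)$ directly from its definition. With $\phi(s):=\phi(s,t,x,u)$ one has, by \eqref{bvbbLL4} and $W(t,s)=S(t-s)$, that $\phi(t+h)=S(h)x+\int_{t}^{t+h}S(t+h-s)\Psi(s,\phi(s),u(s))\,ds$, whence as $h\to0^{+}$
\begin{equation*}
\|\phi(t+h)\|_{L^{2}}^{2}=\|S(h)x\|_{L^{2}}^{2}+2\Bigl\langle S(h)x,\ \int_{t}^{t+h}S(t+h-s)\Psi(s,\phi(s),u(s))\,ds\Bigr\rangle+O(h^{2}).
\end{equation*}
Using $\|S(h)x\|_{L^{2}}^{2}\le e^{-2\omega h}\|x\|_{L^{2}}^{2}$ and the right-continuity at $t$ of $s\mapsto\Psi(s,\phi(s),u(s))$ (handled as for the term $J_{2}$ in the proof of Theorem~\ref{hdkmnmlo}), I obtain for all $t\ge0$, $x\in X$, $u\in\mathcal{U}$
\begin{equation*}
\dot{V}_{u}(t,x)\ \le\ -2\omega\,\|x\|_{L^{2}}^{2}+2\bigl\langle x,\Psi(t,x,u(t))\bigr\rangle.
\end{equation*}
Since $\mu(t)\ge0$, $|\sin t|\le1$ and the pointwise multiplier obeys $0\le\frac{r^{2}}{1+cr^{2}}\le r^{2}$ for all $c\ge0$ and $r\in\mathbb{R}$, the cross term satisfies $2\langle x,\Psi(t,x,u(t))\rangle\le-2\mu(t)\|x\|_{L^{2}}^{2}+2|u(t)|\,\|x\|_{L^{2}}^{2}\le 2|u(t)|\,\|x\|_{L^{2}}^{2}$, so $\dot{V}_{u}(t,x)\le(-2\omega+2|u(t)|)\,V(x)$.

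Finally, passing to $Z(x)=\ln(1+V(x))$ and using the chain rule for the upper Dini derivative,
\begin{equation*}
\dot{Z}_{u}(t,x)\ \le\ \frac{(-2\omega+2|u(t)|)\,V(x)}{1+V(x)}\ \le\ -2\omega\,\frac{\|x\|_{L^{2}}^{2}}{1+\|x\|_{L^{2}}^{2}}+2|u(t)|\ =\ -\eta\bigl(\|x\|_{L^{2}}\bigr)+\chi\bigl(|u(t)|\bigr),
\end{equation*}
with $\eta(r):=2\omega\,r^{2}/(1+r^{2})\in\mathcal{P}$ and $\chi(r):=2r\in\mathcal{K}$. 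Hence $Z$ is a coercive iISS Lyapunov function for $\Sigma$ in dissipative form, and since $\Sigma$ has the BIC property by Proposition~\ref{propooo1}, Theorem~\ref{hjljg} gives that \eqref{1EX}, \eqref{R1jjd} is iISS. (The evolution family here does not satisfy the lower bound \eqref{nmin23} --- its generator being a fourth-order operator with spectrum unbounded below --- so Theorem~\ref{hndkrnm} is not directly applicable; the strict dissipativity of $B$ obtained above is precisely what lets $V(x)=\|x\|_{L^{2}}^{2}$ replace the integral construction $\int_{t}^{\infty}\|W(\tau,t)x\|_{X}^{2}\,d\tau$ of Theorem~\ref{hndkrnm}.)
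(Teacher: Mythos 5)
Your proof is correct and reaches the same destination by the same overall architecture as the paper's: a quadratic energy, integration by parts against the clamped boundary conditions, a logarithmic change of Lyapunov function to absorb the bilinear term, and an appeal to Theorem~\ref{hjljg}. The differences are in the ingredients. First, where the paper imports the spectral threshold from \cite{liu2001stability} (Lemmas 2.1 and 3.1 there, giving $\sigma(\varrho)>0$ iff $\varrho<4\pi^2$), you re-derive it from the constrained Poincar\'e inequality $\|x''\|_{L^2}^2\ge 4\pi^2\|x'\|_{L^2}^2$ on $H_0^2(0,1)$; this is valid (the constrained Euler--Lagrange problem $-y''=\lambda y+c$, $y(0)=y(1)=0$, $\int_0^1 y=0$ has lowest eigenvalue $4\pi^2$ at $y=\sin(2\pi z)$, the competing branch $\tan(s/2)=s/2$ yielding only $\lambda\approx 80.8$), and it buys you an explicit decay rate $\omega=\pi^2(4\pi^2-\varrho)$, though your parenthetical ``short Cauchy--Schwarz argument'' is the one step you should either write out in full or replace by the citation the paper uses. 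Second, you compute the Dini derivative directly along mild solutions via the variation-of-constants formula and the contraction bound $\|S(h)\|\le e^{-\omega h}$ (mirroring the $J_1,J_2$ splitting of Theorem~\ref{hdkmnmlo}), whereas the paper differentiates classically for H\"older continuous inputs and then closes with a density argument; your route handles all of $PC(\mathbb{R}_+,\mathbb{R})$ at once and is arguably cleaner. Third, you drop the time-dependent weight $(1+e^{-t})$ that the paper carries in its intermediate function $Z$; since that weight only contributes a nonpositive term $-e^{-t}\|x\|^2$ that the paper discards anyway, nothing is lost. Your closing remark that Theorem~\ref{hndkrnm} is not directly applicable (the lower bound \eqref{nmin23} fails for this fourth-order generator) correctly explains why a direct construction is needed here.
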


\begin{proof}
For a given $\varrho \in \mathbb{R},$ it is shown in \cite{liu2001stability}, that the operator $-B$ with the Dirichlet boundary conditions
(\ref{R1jjd}) has a countable sequence of eigenvalues $(\sigma_n)_{n\in \mathbb{N}},$ such that $\displaystyle\lim_{n\rightarrow\infty}\sigma_n=\infty.$
We define
$$
\sigma(\varrho )=\min_{n \in \N}\sigma_n(\varrho).
$$
Take
$$
Z(t,x)=(1+e^{-t}) \int_{0} ^{1 }x^{2}(z)dz,\quad x\in L^{2}(0,1),\ t\geq 0.
$$
Obviously,
$$
\|x\|_{L^{2}(0,1)}^{2}\leq Z(t,x)\leq 2\|x\|_{L^{2}(0,1)}^{2} \quad \forall (t,x)\in \mathbb{R}_{+}\times L^{2}(0,1).
$$

Take any $x \in X$ and any $u$ H\"older continuous. Since for such $u$ the solution $\phi$ is classical, the Lie derivative of $Z$ along the solution of the system (\ref{1EX}) is
\begin{eqnarray*}
\dot{Z}_{u}(t,x)&=&
-e^{-t}\|x\|_{L^{2}(0,1)}^{2}\\
   &&+2(1+e^{-t}) \int _{0}^{1 }x(z)\left(-\displaystyle\frac{\partial^{4} x(z)}{\partial z^{4}}-\varrho \displaystyle\frac{\partial^{2} x(z)}{\partial z^{2}}-\mu(t)x(z)+\frac{x(z)|\sin(t)|}{1+e^{-zt}x^{2}(z)}u(z,t)\right)dz.
\end{eqnarray*}
Since $1+e^{-z t}x^{2}(z)\geq 1,$ we have
\begin{eqnarray}\label{1nnnX}
\dot{Z}_{u}(t,x) &\leq&-2(1+e^{-t})\int _{0}^{1 }x(z)\frac{\partial^{4} x(z)}{\partial z^{4}}dz-2\varrho (1+e^{-t})\int _{0}^{1 }x(z) \displaystyle\frac{\partial^{2} x(z)}{\partial z^{2}}dz \nonumber\\
&&\ \ \ \ \ \ \ \ \ \ \ \ \ + 2(1+e^{-t})\int _{0}^{1 }x^{2}(z) u(z,t)dz.
\end{eqnarray}
Partial integration of the first and the second terms of (\ref{1nnnX}) together with the Dirichlet boundary condition (\ref{R1jjd}) leads to
\begin{equation}\label{R1jjjd}
\int _{0}^{1 }x(z)\frac{\partial^{4} x(z)}{\partial z^{4}}dz=\int _{0}^{1 }\left(\displaystyle\frac{\partial^{2} x(z)}{\partial z^{2}}\right)^{2}dz,
\end{equation}
and
\begin{equation}\label{R2jjjd}
 \int _{0}^{1 }x(z)\displaystyle\frac{\partial^{2} x(z)}{\partial z^{2}}dz=-\int _{0}^{1 }\left(\displaystyle\frac{\partial x(z)}{\partial z}\right)^{2}dz.
\end{equation}
Combining (\ref{R1jjjd}) and (\ref{R2jjjd}), we find
$$\dot{Z}_{u}(t,x)\leq -2(1+e^{-t})\left(\int _{0}^{1 }\left(\displaystyle\frac{\partial^{2} x(z)}{\partial z^{2}}\right)^{2}dz-\varrho \int _{0}^{1}\left(\displaystyle\frac{\partial x(z)}{\partial z}\right)^{2}dz\right)+2Z(t,x)\|u(\cdot,t)\|_{U}.$$
By Lemma 3.1 in \cite{liu2001stability}, we obtain
$$
\dot{Z}_{u}(t,x)\leq-2\sigma(\varrho )Z(t,x)+2Z(t,x)\|u(\cdot,t)\|_{U}.
$$
Let $\varrho <4\pi^{2}.$ Then, as stated in \cite[Lemma 2.1]{liu2001stability}, $\sigma(\varrho )>0.$
Consider the following coercive candidate iISS Lyapunov function:
$$
V(t,x):=\ln(1+Z(t,x)),\quad x \in X, \ t\geq 0.
$$
Then,
\begin{eqnarray*}
\dot{V}_{u}(t,x)&\le& \frac{-2\sigma(\varrho )Z(t,x)}{1+Z(t,x)}+2\frac{Z(t,x)}{1+Z(t,x)}\|u\|_{\mathcal{U}}\\
&\le&\frac{-2\sigma(\varrho )\|x\|_{L^{2}(0,1)}^{2}}{1+2\|x\|_{L^{2}(0,1)}^{2}}+2\|u\|_{\mathcal{U}}\\
&=&-\vartheta(\|x\|_{L^{2}(0,1)})+ \chi(\|u\|_{\mathcal{U}}),
\end{eqnarray*}
where $\vartheta(s)=\frac{\sigma(\varrho )s^{2}}{1+2s^{2}}$ and $\chi(s)=2s.$

Theorem \ref{hjljg} now shows iISS for H\"older continuous inputs. The claim for all piecewise continuous inputs follows by density arguments.
\end{proof}

\subsection{Controlled heat equation}
\label{sec:expl2}

Let $\nu>0,$ $\ell>0,$ $X=L^{2}(0,\ell)$ and $\Uc=PC( \mathbb{R}_+, \mathbb{R})$.
We consider the controlled heat equation
\begin{equation}\label{22EX}
\left\lbrace
\begin{array}{l}
\displaystyle\frac{\partial x(z,t)}{\partial t}=\nu\frac{\partial^{2} x(z,t)}{\partial z^{2}}+R(t)x(z,t)+\omega \sin(z) x(z,t)+u(z,t),\quad z\in(0,\ell),\quad t\geq t_0>0,\\
\\
\displaystyle x(0,t)=0=x(\ell,t),
\end{array}\right.
\end{equation}
where $\omega\in\mathbb{R},$ $t_0$ is the initial time, $u\in \mathcal{U}$, $\{R(t)\}_{t\geq 0}$ is a family of linear bounded operators on $X,$ satisfying $\sup_{t \in \mathbb{R}_{+} }\|R(t)\|=r<\infty,$ and such that $t \mapsto R(t)$ is H\"older continuous. 
\par The system (\ref{22EX}) can be written as $\dot{x}=Bx+\Psi(t,x,u)$, where 
$$
B:=\nu\displaystyle\frac{\partial^{2}}{\partial z^2}, \quad \mbox{with domain}\quad D(B)=H_{0}^{1}(0,\ell)\cap H^{2}(0,\ell).
$$ 

and
$$
\Psi: (t,x(z,t),u(z,t))\mapsto R(t)x(z,t)+ \omega\sin(z)x(z,t)+u(z,t).
$$
where $S(t)$ represents the analytic $C_0$-semigroup on $X$ generated by the operator $B.$ 

As the nonlinearity $\Psi(t,x,u)$ is continuous in $t$ and $u$ and locally Lipschitz continuous in $x,$ uniformly in $t$ and $u$ on bounded sets, Proposition \ref{propooo1} guarantees that the system (\ref{22EX}) is a well-posed control system satisfying the BIC property. 

If $u$ is in addition H\"older continuous, \cite[Theorem 3.3]{Hen81} ensures that the system (\ref{22EX}) has a unique classical solution for any given initial condition.

Consider the following coercive ISS Lyapunov function candidate:
$$V(t,x)=\|x\|_{L_{2}(0,\ell)}^{2}=\int_{0}^{\ell }x^{2}(z)dz.$$
For any $x\in X$ and any H\"older continuous $u\in\Uc$, the corresponding solution of (\ref{22EX}) is a classic solution, and thus for $(t,x)\in\mathbb{R}_{+}\times D(B),$ 
the Lie derivative of $V$ with respect to the system (\ref{22EX}) can be computed using the integration by parts:
\begin{eqnarray*}
\dot{V}_{u}(t,x)&=&
2\int _{0}^{\ell}x(z)\left(\nu\frac{\partial^{2} x(z)}{\partial z^{2}}+R(t)x(z)+\omega\sin(t z)x(z)+u(z,t)\right)dz\\
&\leq&-2\nu\int _{0}^{\ell}\left(\frac{\partial x(z)}{\partial z}\right)^{2}dz+(2r+\omega)\int _{0}^{\ell}x^{2}(z)dz+2\int _{0}^{\ell}x(z)u(z,t)dz.
\end{eqnarray*}
Utilizing Friedrich's inequality (see \cite{mitrinovic1991inequalities}) in the first term, we continue estimates:
$$\dot{V}_{u}(t,x)\leq-\frac{2\nu\pi^{2}}{\ell^{2}}V(t,x)+(2r+\omega)V(t,x)+2\int _{0}^{\ell}x(z)u(z,t)dz.$$
Using Young's inequality, we have for any $\epsilon>0,$
\begin{eqnarray*}
\dot{V}_{u}(t,x)&\leq&
\left(-\frac{2\nu\pi^{2}}{\ell^{2}}+2(r+\omega)+\epsilon\right)V(t,x)+\frac{1}{\epsilon}\int _{0}^{\ell}u^{2}(z,t)dz\\
&\leq&\left(-\frac{2\nu\pi^{2}}{\ell^{2}}+2(r+\omega)+\epsilon\right)V(t,x)+\frac{\ell}{\epsilon}\|u(\cdot,t)\|_{U}^{2}.
\end{eqnarray*}
To show that $V$ is an ISS Lyapunov function for the equation (\ref{22EX}), we assume that
$-\frac{2\nu\pi^{2}}{\ell^{2}}+2(r+\omega)+\epsilon<0.$ As $\epsilon>0$ can be chosen arbitrarily small, we obtain the following sufficient condition for ISS of the system (\ref{22EX}) for H\"older inputs:
$$r+\omega<\frac{\nu\pi^{2}}{\ell^{2}}.$$
The claim for all piecewise continuous inputs follows by a density argument.

\section{Conclusion}
\label{sec:conclusion}

In this paper, we have proved that the existence of an (L)ISS/iISS Lyapunov function implies the (L)ISS/iISS property of a general time-varying infinite-dimensional system. We investigated the input-to-state stability of time-varying evolution equations in terms of coercive and non-coercive ISS Lyapunov functions with piecewise-right continuous inputs. Indeed, we have seen in this paper some classes of time-varying linear systems with
bounded input operators in the construction of Lyapunov functions. In addition, new methods for the construction of non-coercive LISS/iISS Lyapunov functions for a certain class of time-varying semi-linear evolution equations with unbounded linear operator $\{A(t)\}_{t\geq t_{0}}$ have been proposed as well. The application to some specific time-varying partial differential equations shows the practical significance of the theoretical results. An interesting problem for future research is the development of Lyapunov methods for ISS and iISS of the time-varying nonlinear parabolic PDEs with boundary disturbances.

%
%
%
%
 %

\bibliographystyle{abbrv}
\bibliography{Mir_LitList_NoMir,MyPublications,fabian}

\section*{Appendix}

\begin{exm}\label{ex:Uniform-attractivity-and-uniform-stability}
We construct a uniformly attractive system for which the evolution operator $W(t,t_0)$ is not uniformly bounded. It suffices to consider a one-dimensional system. Define $A:\mathbb{R}_{+}\to\mathbb{R}$ by setting for $k=0,1,2,\ldots$
\begin{equation*}
A(t):=
\begin{cases}
-2\ln \left(2(k+1)^2\right), \quad & t\in [k, k+\frac{1}{2}),\\
\phantom {-} 2\ln(k+1),\quad & t\in [k+\frac{1}{2}, k+1)\,.
\end{cases}
\end{equation*}
With this, we have for all $k\geq 0$ the following observations:
\begin{equation*}
\sup_{s\in[0,1]}W\Big(k+1,k+s\Big)=W\Big(k+1,k+\frac{1}{2}\Big)=e^{2\ln(k+1)\frac{1}{2}}=k+1\,.
\end{equation*}
In particular, the associated system is not uniformly stable. On the other hand,
\begin{equation*}
W(k+1,k)=e^{2\ln (k+1)\frac{1}{2}}e^{-2\ln(2(k+1)^2)\frac{1}{2}}=(k+1)\cdot\frac{1}{2(k+1)^2}=\frac{1}{2(k+1)}\,,
\end{equation*}
and
\begin{equation*}
\sup_{s\in[0,1]}W(k+s,k)=W(k,k)=1.
\end{equation*}
We claim that the system is uniformly attractive. To this end, fix $\varepsilon >0$ and choose $m\in \mathbb{N}$ such that
\begin{equation*}
\frac{1}{2m}<\varepsilon.
\end{equation*}
We claim that $T(\varepsilon)=m+1$ is a suitable choice to satisfy the item 2 of Definition~\ref{def:StabAttr}.
Fix an arbitrary $t_0=k_0+\delta_0$ with $k_0\in\mathbb{N}\cup\{0\}$, and $0\leq\delta_0<1.$ Now pick any $t\ge t_0+m+1.$ Then, $t=k_0+M+s,$ where $M\ge m+1,$ and $s\in[0,1].$ Then,
\begin{align*}
W(t,t_0)= & W(t, k_0+M)W(k_0+M, k_0+M-1)
           \cdots
          W(k_0+2,k_0+1) W(k_0+1, t_0)
\end{align*}
and we obtain
\begin{equation*}
\left|W(t,t_0)\right|\leq 1 \cdot\frac{1}{2(k_0+M)}\cdots\frac{1}{2(k_0+2)}(k_0+1)<\frac{1}{2^m}<\varepsilon\,.
\end{equation*}
This proves uniform attractivity.
\end{exm}

\end{document}